\theoremstyle{plain} 
\newtheorem{theorem}{Theorem}[section]
\newtheorem{lemma}[theorem]{Lemma}
\newtheorem{corollary}[theorem]{Corollary}
\newtheorem{definition}[theorem]{Definition}
\newtheorem{proposition}[theorem]{Proposition}
\newtheorem{remark}[theorem]{Remark}
\numberwithin{equation}{section}
\newcommand{\cc}{{\mathbb C}}
\newcommand{\pp}{{\mathbb P}}
\newcommand{\rr}{{\mathbb R}}
\newcommand{\zz}{{\mathbb Z}}
\newcommand{\nn}{{\mathbb N}}
\newcommand{\ggg}{{\mathbb G}}
\newcommand{\aaa}{{\mathbb A}}
\newcommand{\Gm}{\ggg _m}
\newcommand{\Hh}{{\mathcal H}}
\newcommand{\Pp}{{\mathcal P}}
\newcommand{\Qq}{{\mathcal Q}}
\newcommand{\Oo}{{\mathcal O}}
\newcommand{\Mm}{{\mathcal M}}
\newcommand{\Nn}{{\mathcal N}}
\newcommand{\stackquot}{{/\;\!\!\! /}}
\newcommand{\res}{{\rm res}}
\newcommand{\hdot}{{\bullet}}
\begin{document}

\author[F. Loray]{Frank Loray}
\address{IRMAR, Campus de Beaulieu\\
Universit\'e de Rennes I\\
35042 Rennes Cedex, France}
\email{frank.loray@univ-rennes1.fr}

\author[M.-H. Saito]{Masa-Hiko Saito}
\address{Department of Mathematics, Faculty of Science\\
Kobe University\\
Kobe, Rokko, 657-8501, Japan}
\email{mhsaito@math.kobe-u.ac.jp}

\author[C. Simpson]{Carlos Simpson}
\address{CNRS, Laboratoire J. A. Dieudonn\'e, UMR 6621
\\ Universit\'e de Nice-Sophia Antipolis\\
06108 Nice, Cedex 2, France}
\email{carlos@math.unice.fr}

\thanks{Supported in part by JSPS Grant-in-Aid for Scientific Research (S)19104002
and ANR grant BLAN 08-1-309225 (SEDIGA)}

\title[Foliations on the moduli space of connections]{Foliations on the moduli space of
rank two connections on the projective line minus four points}

\subjclass[2010]{Primary 34M55; Secondary 14D20, 32G20, 32G34}

\keywords{Painlev\'e VI, Representation, Fundamental group, 
Logarithmic connection, Higgs bundle, Parabolic structure, Projective line, 
apparent singularity, Okamoto-Painlev\'e pair,
Middle convolution}

\begin{abstract}
We look at natural foliations 
on the Painlev\'e VI moduli space of regular connections of rank $2$
on $\pp ^1 -\{ t_1,t_2,t_3,t_4\}$. These foliations are fibrations,
and are interpreted in terms of the nonabelian Hodge filtration, giving a 
proof of the nonabelian Hodge foliation conjecture in this case. Two basic 
kinds of fibrations arise: from apparent singularities, and from  
quasiparabolic bundles. We show that these are transverse. 
Okamoto's additional symmetry, which may be seen as Katz's
middle convolution, exchanges the quasiparabolic and 
apparent-singularity foliations.  
\end{abstract}

\maketitle

\section{Introduction} \label{sec-introduction}

The Painlev\'e VI equation is the isomonodromic deformation equation for
systems of differential equations of
rank $2$ on $\pp ^1$ with four logarithmic singularities over $D:= \{ t_1,t_2,t_3,t_4\}$. 
Such a system 
of differential equations is encoded in a vector bundle with logarithmic
connection $(E,\nabla )$, where $E$ is a vector bundle on $X=\pp ^1$ and
$\nabla : E\rightarrow E\otimes \Omega ^1_X(\log D)$ is a first order algebraic
differential operator satisfying the Leibniz rule of a connection.  At a singular
point $t_i$ the residue of $\nabla$ is a linear endomorphism of $E_{t_i}$.
The ``space of initial conditions for Painlev\'e VI''  is the moduli space of
$(E,\nabla )$ such that the residues $\res (\nabla , t_i)$ lie in fixed conjugacy
classes.  The conjugacy class information is denoted ${\bf r}$, which for us will just
mean fixing two distinct eigenvalues $r^{\pm}_i$ at each point. 
The isomonodromic evolution equation concerns what happens when the $t_i$ move.
However, in this paper we consider only the moduli space so the $t_i$ are fixed.

The associated moduli stack is denoted by $\Mm ^d({\bf r})$. For generic choices of
${\bf r}$, all connections are irreducible and the moduli stack is a $\Gm$-gerb
over the moduli space $M^d({\bf r})$. Here $d$ denotes the degree of the bundle
$E$, related to ${\bf r}$ by the Fuchs relation \eqref{fuchsrel}. We usually
assume that $d$ is odd, essentially equivalent to $d=1$, because any
bundle of degree $1$ having an irreducible connection must be of the form 
$B=\Oo \oplus \Oo (1)$. This facilitates the consideration of the parameter space
for quasiparabolic structures. 

The object of this paper is to study several natural
fibrations on the moduli space. The second author, with Inaba and Iwasaki,
have described the structure of $M^d({\bf r})$ as obtained by several blow-ups of a
ruled surface over $\pp ^1$ in \cite{InabaIwasakiSaito,InabaIwasakiSaito2}. The function to $\pp ^1$ may be viewed as
given by the position of an apparent singularity, considered also by Szabo \cite{Szabo}
and Aidan \cite{Aidan}. The first author has considered this fibration too but also looked
at the function from
$M^d({\bf r})$ to the space of quasiparabolic bundles \cite{Loray}, 
which as it turns out is again
$\pp ^1$ or  more precisely a non-separated scheme which had been introduced by Arinkin
\cite{Arinkin}. 
The third author has defined a
decomposition of $M^d({\bf r})$ obtained by looking at the limit of $(E,u\nabla )$ as
$u\rightarrow 0$ into the moduli space of semistable parabolic Higgs bundles \cite{idsm}. 

We compare these pictures by examining precisely the condition of stability depending
on parabolic weight parameters. 
A choice of one of the two residues $r^-_i$
is made at each point, and the eigenspace provides a $1$-dimensional subspace 
$P_i\subset E_{t_i}$. The collection $(E,P_{\hdot})$ is a quasiparabolic bundle \cite{Seshadri}. Given that $E\cong B=\Oo \oplus \Oo (1)$, we can write down a parameter
space for all quasiparabolic structures on $B$. The moduli stack for such quasiparabolic
bundles is the stack quotient by $A=Aut(B)$. 

Specifying two parabolic weights $\alpha ^{\pm}_i$ at each 
point\footnote{Here the smaller weight $\alpha _i^-$ is associated to the
subspace $P_i$, which 
may be different from the convention used in some other papers.}
transforms the
quasi\-para\-bolic structures into parabolic ones for which there is a notion of stability. 
There is a collection of $8$ inequalities 
concerning the parabolic weights appearing in Proposition \ref{unstablezones}: (a), (b) and $6$ of type (c),
see also \eqref{staba} \eqref{stabb} \eqref{stabc}. Depending on these inequalities, generically
the underlying parabolic bundle 
will either be stable, or unstable. The space of
parabolic weights is therefore divided up into a stable zone, and $8$ unstable zones.

The different unstable zones are permuted by the operation of performing two elementary
transformations. Doing two at a time 
keeps the condition that the underlying bundle has odd degree.
Up to these permutations, we can assume that we are in the (a)-unstable
zone $\epsilon _1+\epsilon _2+\epsilon_3 + \epsilon _4 < 1/2$ where $\epsilon _i=(\alpha ^+_i-\alpha ^-_i)/2$. 
In this case, the subbundle $\Oo (1)\subset E=\Oo \oplus \Oo (1)$
is destabilizing. It determines an apparent singularity, which is the unique point
at which the subbundle osculates to the direction of the connection. The position
of this apparent singularity gives the map to $\pp ^1$. We point out in
Theorem \ref{unstabletheorem} that, in this
unstable zone, this map is the same as the map taking $(E,\nabla )$ to the limiting
$\alpha$-stable Higgs bundle. This furnishes the comparison between the
Higgs limit decomposition, and the fibration of \cite{InabaIwasakiSaito,InabaIwasakiSaito2}.

This comparison allows us to prove the foliation conjecture of \cite{idsm}
in this case. The Higgs limit decomposition is, from the definition, just a decomposition
of the moduli space into disjoint locally closed subvarieties, which are Lagrangian
for the natural symplectic structure. The foliation conjecture posits that this decomposition
should be a foliation in the case when the moduli space is smooth. For the unstable zone,
the decomposition is just the collection of connected components of the fibers of the
smooth morphism of \cite{InabaIwasakiSaito,InabaIwasakiSaito2} to $\pp ^1$, so it is a foliation. 

We next turn our attention to the stable zone. The quasiparabolic bundles which support an irreducible connection with given residues are exactly the simple ones, and
the quotient of the set of simple quasiparabolic structures by the automorphism
group is the non-separated scheme $\Pp$ which is like $\pp ^1$ but has 
two copies of each $t_i$.
This is the same as the space of leaves in the fibration corresponding to the unstable zone.
It has also appeared in Arinkin's work \cite{Arinkin} 
on the geometric Langlands program.

In the stable zone, the limit $\lim _{u\rightarrow 0}(E,u\nabla , P_{\hdot})$ in the moduli space of $\alpha$-stable parabolic Higgs bundles is just the 
underlying parabolic bundle $(E,P_{\hdot})$, except at one from each pair of 
points lying over $t_i$. Thus, 
Theorem \ref{stabletheorem} says that in the stable zone, the Higgs  limit 
decomposition is just the decomposition into fibers of the projection 
$M^d({\bf r})\rightarrow \Pp$ considered in \cite{Loray},
sending $(E,\nabla , P_{\hdot})$ to $(E,P_{\hdot})$. 
As before, this interpretation 
allows us to prove the foliation conjecture of \cite{idsm} in this case.

Putting these together, we obtain a proof of the foliation conjecture
for the moduli space of parabolic logarithmic connections of rank $2$ on
$\pp ^1-\{ t_1,t_2,t_3,t_4\}$ with any generic residues and any generic
parabolic weights. The genericity condition is non-resonance plus
a natural condition which has been
introduced by Kostov, ruling out the possibility of 
reducible connections. The combination of these two conditions will be called
``nonspeciality''. 

In Section \ref{sec-dm} we point out that this discussion gives the same results for
the case of local systems on a root stack. 
These
correspond to parabolic logarithmic connections on $\pp ^1$ whose
residues and weights are the same rational numbers. In the root stack interpretation,
the Higgs limit decomposition may
be tied back to the same thing on a compact curve, a cyclic covering of $\pp ^1$ branched over $t_1,t_2,t_3,t_4$. 

In Section \ref{sec-transverse} we show that the two different kinds of fibrations,
obtained from apparent singularities and from the quasiparabolic structure, are
strongly transverse: generic fibers intersect once. A similar picture has been described by Arinkin and Lysenko \cite{ArinkinLysenko2}
when we switch to trace-free connections (and $\deg(E)=0$). 

In Section \ref{sec-okamoto} we recall the additional Okamoto symmetry, and
the fact pointed out by the first author in \cite{Loray} that it interchanges the two
different types of
fibrations considered above. The geometrical picture was also investigated in
\cite{ArinkinLysenko2}.
Then in Section \ref{sec-mc}, we propose a possible explanation
by interpreting Okamoto's additional symmetry as Katz middle convolution.
This interpretation is now well known, apparently first pointed out by
Dettweiler and Reiter \cite{DettweilerReiterPainleve}, see also Boalch \cite{Boalch}
\cite{BoalchQuiver}, and Crawley-Boevey \cite{CrawleyBoevey}.

We calculate,
concentrating on the case of finite order monodromy, that a middle convolution with
suitably chosen rank $1$ local system interchanges
the stable and unstable zones.  Assuming a compatibility of
higher direct images which is not yet proven, the middle convolution will preserve the
Higgs limit decomposition and this property would imply that it permutes the two different
kinds of foliations. 

As a part of the 
numerous ongoing investigations of the rich structure of these moduli spaces,
the present discussion points out the role of the different regions in
the space of parabolic weights. Nevertheless,  
a number of further questions remain open in this direction, 
such as what happens along the hyperplanes of special values of residues
and/or parabolic weights. We hope to address these in the future. 

Each of us would like to thank the numerous
colleagues with whom we have discussed these questions. 
The second author would like to thank other authors for their hospitality
during his stays in Nice and Rennes.


\section{Moduli stacks of parabolic logarithmic $\lambda$-connections}

Let $X:= \pp^1$, with a divisor consisting of four distinct
points $D:= \{ t_1,t_2,t_3,t_4\}$, and put $U:= X-D$. Let $\Mm ^d\rightarrow \aaa ^1$ denote the moduli
stack \cite{InabaIwasakiSaito,InabaIwasakiSaito2}
of logarithmic $\lambda$-connections of rank two and degree $d$ with quasiparabolic
structure on $(X,D)$. For a scheme 
$S$, an object of $\Mm (S)$ is a quadruple $(\lambda , E, \nabla ,P_{\hdot})$ where
$\lambda : S\rightarrow \aaa ^1$, $E$ is a rank $2$ vector bundle on $X\times S$
of degree $d$ on the fibers $X\times \{ s\}$,
$P_{\hdot}= (P_1,P_2,P_3,P_4)$ is a collection of rank $1$ subbundles
$$
P_i \subset E|_{\{ t_i\} \times S},
$$
and 
$$
\nabla: E\rightarrow E\otimes _{\Oo _{X\times S}}\Omega ^1_{X\times S/S}(\log D\times S),
$$ is a logarithmic $\lambda$-connection on $X\times S/S$ preserving $P_i$.
This means that $\nabla$ is
a map of sheaves satisfying $\nabla (ae)=a\nabla (e)+\lambda d(a)e$, inducing a residue 
endomorphism 
$$
\res (\nabla , t_i):E_{\{ t_i\} \times S} \rightarrow E_{\{ t_i\} \times S}
$$
which is required to preserve $P_i$. 
The
groupoid $\Mm ^d(S)$ has these objects, and morphisms are isomorphisms of bundles
with $\lambda$-connection preserving the quasiparabolic structure.

For $\lambda \in \aaa ^1$ let $\Mm ^d_{\lambda}$ denote the fiber of 
$\Mm ^d\rightarrow \aaa ^1$ over $\lambda$. For $\lambda =1$ it is the
moduli stack of logarithmic connections, and the fibers are
all the same for $\lambda \neq 0$.  For $\lambda = 0$ it is the moduli stack
of Higgs bundles. In both cases, quasiparabolic structures are included. 

The value of $\lambda$ is determined by $\nabla$, so it may be left out
of the notation, writing if necessary $\lambda = \lambda (\nabla )$. 

Given a point $( E, \nabla ,P_{\hdot})\in \Mm ^d(S)$, we get two residue
eigenvalues:
\begin{itemize}
\item $\res ^-_i(E,\nabla )$ is the scalar by which $\res (\nabla , t_i)$
acts on $P_i$, and 
\item $\res ^+_i(E,\nabla )$ is the scalar by which $\res (\nabla ,t_i)$ acts
on $E_{t_i}/P_i$. 
\end{itemize}

These satisfy the Fuchs relation
\begin{equation}
\label{fuchsrel}
\sum _{i=1}^4 (\res ^+_i(E,\nabla ) + \res ^-_i(E,\nabla )) + \lambda \deg (E)= 0.
\end{equation}

Let $\Nn ^d\rightarrow \aaa ^1$ be the bundle of possible residual data satisfying the 
Fuchs relation for $\deg (E)=d$, so 
$$
\Nn ^d = \{ (\lambda , r_1^+ , r_1^-,\ldots , r^+_4, r_4^-)|r_1^+ + \cdots + r^-_4 +\lambda d =0\} .
$$
The residues give a map 
$$
\Psi : \Mm ^d\rightarrow \Nn ^d 
$$
relative to $\aaa ^1$. 
If ${\bf r}(\lambda ): \aaa ^1\rightarrow \Nn ^d$ is a section denoted
$\lambda \mapsto (\lambda , r^+_1(\lambda ),\ldots , r^-_4(\lambda ))$, let
$\Mm ^d({\bf r}(\lambda ))$ be the pullback of this section in $\Mm ^d$. 
It is the moduli stack of $(E,\nabla , P_{\hdot})$ such that
the eigenvalue of the residue of $\nabla$ acting on $E_{t_i}/P_i$ (resp. $P_i$) is
$r_i^+(\lambda (\nabla ))$ (resp. $r_i^-(\lambda (\nabla ))$) for $i=1,2,3,4$. 

Note that in \cite[\S 2.2]{InabaIwasakiSaito} the notation is slightly different:
the parameter we call $\lambda$ here 
is replaced by $\phi$ but which has a somewhat more general
meaning, and the residues are denoted there by $\lambda _i$ which correspond to our $r^-_i$.
In \cite{InabaIwasakiSaito} it is assumed that $r^-_i+r^+_i\in \zz$ but that 
normalization doesn't make for any loss of generality.

Suppose $r_i^+\neq r_i^-$ for $i=1,\ldots , 4$. Then $\Mm _1^d({\bf r})$
may also be viewed as the moduli stack of logarithmic connections $(E,\nabla )$
with $\deg (E)=d$ and such that the eigenvalues of $\res (\nabla , t_i)$
are $r_i^{\pm}$, but without specifying $P_{\hdot}$. 
The eigenvalue condition is a closed condition, just saying that
$$\begin{matrix}
{\rm Tr}(\res (\nabla , t_i)) &=& r^+_i+r^-_i,\\
{\rm det}(\res (\nabla , t_i))&=& r^+_ir^-_i.
\end{matrix}
$$
Because of the hypothesis that the eigenvalues are distinct, the rank one subspace
$P_i\subset E_{t_i}$ is uniquely determined as the $r^-_i$-eigenspace of 
$\res (\nabla , t_i)$. 

Let $\Mm _1^d({\bf r})^{\rm irr}\subset \Mm _1^d({\bf r})$ be the open substack
parametrizing irreducible connections. It is a $\Gm$-gerb 
over its 
coarse moduli space
$$
\Mm _1^d({\bf r})^{\rm irr}\rightarrow M _1^d({\bf r})^{\rm irr}.
$$
The group $\Gm$ acts on $\Mm^d $ by 
$$
u:(\lambda , E,\nabla ,P_{\hdot})\mapsto (u\lambda , E, u\nabla ,P_{\hdot}).
$$
It is compatible with the standard action on the $\lambda$-line $\aaa ^1$. 
The action on the residues is 
$$
\res _i^{\pm}(E,u\nabla ) = u\res _i^{\pm}(E,\nabla ).
$$
Therefore if ${\bf r}(\lambda )= \lambda {\bf r}$ is a section
such that $r_i^{\pm}(\lambda )=\lambda r_i^{\pm}$ then the action restricts
to an action on $\Mm^d (\lambda {\bf r})$.

Over the open set $\lambda \neq 0$ the Artin stack $\Mm ^d$ is of finite type, but
if $\lambda = 0$ is included then it is only locally of finite type, since the
collection of Higgs bundles of degree $d$ with no semistability condition is unbounded. 
 
Introducing parabolic weights allows
us to consider a semistability condition \cite{InabaIwasakiSaito,InabaIwasakiSaito2}, 
but is also motivated by the growth rates
of harmonic metrics \cite{hbnc}. A vector of parabolic weights denoted $\alpha$ is
a collection of real numbers
$$
\alpha = (\alpha ^-_1,\alpha ^+_1, \alpha ^-_2,\alpha ^+_2, \alpha ^-_3,\alpha ^+_3,
\alpha ^-_4,\alpha ^+_4)
$$
with 
$$
\alpha ^-_i\leq \alpha ^+ _i\leq \alpha ^-_i+1 .
$$
Notice that we don't require that these lie in any particular interval, in fact it
will be convenient to choose different intervals for different points $t_i$ sometimes.

This phenomenon, which goes back to Manin's comments figuring in \cite{Deligne}, 
is related to Mochizuki's notation  \cite{Mochizuki} ${}_cE$ for a parabolic 
structure based at a real number $c$.
A given parabolic sheaf $E_{\hdot}$
in a neighborhood of $t_i$ according to the definitions of \cite{hbnc,MaruyamaYokogawa}, 
will yield a weighted parabolic bundle $({}_{c_i}E,P_i, \alpha _i^{\pm})$
in the present (and original \cite{Seshadri}) sense,
for each choice of $c_i \in \rr$. The parabolic
weights $\alpha _i^{\pm}$ are the 
weights of $E_{\hdot}$ which are contained in the
interval $(c-1,c]$. In the other direction,
a given $(E,P_i,\alpha _i^{\pm})$ as we are considering
here, will come  
from a unique parabolic sheaf $E_{\hdot}$ by the construction ${}_{c_i}E$
using any choice of cutoff number $c_i$ 
between $\alpha _i^+$ and $\alpha ^-_i+1$. 
Since the choice of $c_i$ doesn't 
have any effect for most of our considerations, we leave it out of the notation. 

We use the convention here that smaller weights are associated to
subsheaves or subspaces in the filtration. This is the convention which
was used for example in \cite{Mochizuki} and \cite{IyerSimpson}, but is opposite in sign to some older conventions. So, here the weight 
$\alpha _i^-$ is associated to the subspace $P_i$ and
$\alpha _i^+$ is associated to $E_{t_i}/P_i$. 

If $\alpha$ is a choice of weights, define
$$
\deg ^{\rm par}(E,\nabla , P_{\hdot}):= \deg (E) - \sum _{i=1}^4 (\alpha ^+_i+\alpha ^-_i).
$$
If $F\subset E$ is a rank one subbundle preserved by $\nabla$, let $\sigma (i,F)$
be either $-$, if $F_{t_i}\subset P_i$, or $+$ otherwise. Then put 
$$
\deg ^{\rm par}(F):= \deg (F) - \sum _{i=1}^4 \alpha _i^{\sigma (i,F)}.
$$
Say that $(E,\nabla , P_{\hdot})$ is $\alpha$-semistable if, for any rank one
subbundle preserved by $\nabla$ we have
$$
\deg ^{\rm par}(F)\leq \frac{\deg ^{\rm par}(E,\nabla , P_{\hdot})}{2}; 
$$
say that it is $\alpha$-stable if the strict inequality $<$ always holds.
These stability and semistability conditions are open on $\Mm$, and let
$$
\Mm ^{d,\alpha}\subset \Mm ^d, \;\;\; \Mm ^{d,\alpha}({\bf r}(\lambda ))\subset \Mm ^d
({\bf r}(\lambda ))
$$
be the open substacks of $\alpha$-semistable points. As usual denote by a subscript the fiber
over $\lambda \in \aaa ^1$. 

By geometric invariant theory \cite{InabaIwasakiSaito}
there is a universal categorical coarse moduli space
$$
\Mm ^{d,\alpha} \rightarrow M^{d,\alpha}
$$
where $M^{d,\alpha}$ is a quasiprojective variety. 
This  induces on the closed substack a universal categorical quotient
$$
\Mm ^{d,\alpha}({\bf r}(\lambda )) \rightarrow M^{d,\alpha} ({\bf r}(\lambda ))
$$
where $M^{d,\alpha}({\bf r}(\lambda ))$ is also the closed subscheme 
of $M^{d,\alpha}$ defined as the inverse image of the same section ${\bf r}$
under the morphism
$$
M^{d,\alpha}\rightarrow \Nn ^d
$$
which exists by the categorical quotient property.

These moduli spaces are constructed by Inaba, Iwasaki and the second author \cite{InabaIwasakiSaito,InabaIwasakiSaito2}, see
also Nitsure \cite{Nitsure} for plain logarithmic connections which
can be viewed as the case $\alpha ^+_i=\alpha ^-_i$, 
Maruyama-Yokogawa \cite{MaruyamaYokogawa} for parabolic bundles, 
Konno \cite{Konno}, Boden-Yokogawa \cite{BodenYokogawa}, Nakajima \cite{Nakajima},
Schmitt \cite{Schmitt}
and others for
parabolic Higgs bundles, 
and the papers of Arinkin and Lysenko \cite{Arinkin,ArinkinLysenko1,ArinkinLysenko2} as well as
following papers such as Oblezin \cite{Oblezin},
which treat explicitly the rank two case we are considering here. 

The space of initial conditions of Painlev\'e VI was first introduced in
\cite{Okamoto1} by blowing up of rational surfaces along accessible
singularities of Painlev\'e VI equations. More geometric or deformation
theoretic descriptions of Okamoto spaces of initial conditions are given
by Sakai \cite{Sakai} and by Saito-Takebe-Terajima
\cite{SaitoTakebeTerajima}.  We note that one can identify Okamoto spaces
of initial conditions or their natural compactifications, 
Okamoto-Painlev\'e pairs, in \cite{SaitoTakebeTerajima} with the moduli spaces
of $\alpha$-stable parabolic connections (see
\cite[Theorem 4.1]{InabaIwasakiSaito2}).

The global family of rank $2$ stable parabolic connections
over the space of local exponents
constructed
in \cite{InabaIwasakiSaito} really depends on the choice of stability
condition from the choice of parabolic weights.  However if the
local exponents are Kostov-generic, all connections are
irreducible, so stability does not depends on weights.
Even in this case,
if the local exponents are resonant,
then the fiber of $M^d \rightarrow \Nn ^d$ over that point ${\bf r}$
is independent of the parabolic weights, 
but the total family of connections
are not biregular isomorphic near the nighborhood of
the fiber, rather a flop phenomenon occurs.

The elementary transformation at the point $t_i$,
may be defined as follows, see \cite[\S 3]{InabaIwasakiSaito}. 
\label{etpage}
Set $\widetilde{E}:= \ker (E\rightarrow E_{t_i}/P_i)$, let $\widetilde{\nabla }$ be the
induced $\lambda$-connection, and put $\widetilde{P}_j=P_j$ for $j\neq i$ whereas
$\widetilde{P}_i:= (E_{t_i}/P_i)(-t_i)$ in the exact sequence
$$
0\rightarrow \widetilde{P}_i \rightarrow \widetilde{E}_{t_i}\rightarrow P_i\rightarrow 0.
$$
Then 
$$
\varepsilon _i(E,\nabla , P_{\hdot}):= (\widetilde{E},\widetilde{\nabla },\widetilde{P}_{\hdot}).
$$
Note that $\deg (\widetilde{E})=\deg (E)-1$ so
$$
\varepsilon _i : \Mm ^d\rightarrow \Mm ^{d-1}.
$$
In terms of the notations of \cite{InabaIwasakiSaito} we have
$\widetilde{E}= {\rm Elm}^+_{t_i}(E)\otimes \Oo (-t_i)$.

These transformations are some of the ``B\"acklund transformations''
in the classical theory of Painlev\'e VI and Garnier
equations \cite{InabaIwasakiSaito}, and for more general systems they are called
``Gabber transformations'' by Esnault and Viehweg \cite{EsnaultViehweg},
see also Machu \cite{Machu1}. 

Suppose $r^{\pm}_i(E)$ are the residues of $\nabla$ at $t_i$.
A section of $\widetilde{E}$ projecting into $\widetilde{P}_i$ is of the form $ze$
for $e$ a section of $E$ projecting to something nonzero modulo $P_i$, 
and $z$ a coordinate at $t_i$. We can assume that $\nabla (e)= r^+_i(E)e\cdot d\log z$,
in which case
$$
\nabla (ze) = z\nabla (e) + \lambda e\cdot dz = (r^+_i(E)+\lambda )(ze)\cdot d\log z .
$$
On the other hand a section projecting to 
$\widetilde{E}_{t_i}/\widetilde{P}_i$ is just a section of $E$ projecting to $P_i$. 
Thus the new residues are 
$$
r^+_i(\widetilde{E})= r^-_i(E), \;\;\; r^-_i(\widetilde{E})= r^+(E)+\lambda .
$$
This transformation
defines a function $\varepsilon _i: \Nn ^d \rightarrow \Nn ^{d-1}$,
such that 
$$
\Psi (\varepsilon _i(E,\nabla , P_{\hdot})) = 
\varepsilon _i\Psi (E,\nabla , P_{\hdot}).
$$
The natural transformations $\varepsilon _i$ on $\Mm ^d$ and $\Nn ^d$ are
invertible, because there are natural transformations going in the other direction.

The following well-known fact helps by giving a normal form for the bundles. 

\begin{lemma}
\label{deg1}
Suppose $(E,\nabla , P_{\hdot})\in (\Mm _1^1)^{\rm irr}$ is an irreducible logarithmic
connection on a bundle of degree $1$.  
Then $E\cong \Oo _{\pp ^1}\oplus \Oo _{\pp ^1}(1)$.
\end{lemma}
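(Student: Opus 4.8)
The plan is to classify rank two vector bundles on $\pp^1$ using Grothendieck's splitting theorem, and then rule out all the other splitting types using irreducibility of the connection. Write $E\cong \Oo(a)\oplus\Oo(b)$ with $a\ge b$ and $a+b=\deg(E)=1$. Since $a+b$ is odd we have $a>b$, so $a=\tfrac{1+k}{2}$, $b=\tfrac{1-k}{2}$ for some odd positive integer $k=a-b\ge 1$. The case $k=1$ gives exactly $E\cong\Oo\oplus\Oo(1)$, so the goal is to show $k\ge 3$ is impossible.

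First I would observe that when $k\ge 3$, the line subbundle $\Oo(a)\hookrightarrow E$ has slope $a\ge 2$, while $E$ has slope $1/2$; one then argues this subbundle must be $\nabla$-invariant, contradicting irreducibility. Concretely: consider the composite $\Oo(a)\hookrightarrow E\xrightarrow{\nabla} E\otimes\Omega^1_X(\log D)$ followed by projection to $(E/\Oo(a))\otimes\Omega^1_X(\log D)=\Oo(b)\otimes\Oo(2)=\Oo(b+2)$. This is an $\Oo_X$-linear map (the derivation term dies modulo the subbundle), i.e.\ a section of $\mathrm{Hom}(\Oo(a),\Oo(b+2))=\Oo(b+2-a)=\Oo(2-k)$. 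If $k\ge 3$ then $2-k<0$, so this section vanishes identically, which means $\nabla$ preserves $\Oo(a)\subset E$. This exhibits a $\nabla$-invariant rank one subbundle, contradicting the assumption that $(E,\nabla,P_{\hdot})$ is irreducible. Hence $k=1$ and $E\cong\Oo\oplus\Oo(1)$.

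The main point to handle carefully — and where I expect the only real subtlety to lie — is the logarithmic twist: the relevant sheaf of one-forms is $\Omega^1_{\pp^1}(\log D)=\Omega^1_{\pp^1}(D)=\Oo(-2)\otimes\Oo(4)=\Oo(2)$, using that $D$ consists of four points. One must be sure the residue/parabolic conditions on $\nabla$ do not interfere with this degree count; they do not, since the argument only uses that $\nabla$ is a $\lambda$-connection with logarithmic poles along $D$ (indeed it works verbatim for any $\lambda$, including $\lambda=0$, so the same normal form holds for Higgs bundles). A secondary check is that "maximal destabilizing subbundle is $\nabla$-invariant" type reasoning is not even needed here: the vanishing of a section of a negative line bundle is immediate, so the invariance is forced directly rather than via uniqueness of the Harder–Narasimhan filtration. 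With $k=1$ ruled in and all $k\ge 3$ ruled out, the lemma follows. \eop
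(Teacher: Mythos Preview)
Your proof is correct and follows essentially the same approach as the paper: both compute $\Omega^1_{\pp^1}(\log D)\cong\Oo(2)$, then argue that any line subbundle of degree $\geq 2$ has vanishing second fundamental form $L\to (E/L)(2)$ (a section of a negative line bundle), hence is $\nabla$-invariant, contradicting irreducibility. Your version is slightly more explicit in invoking Grothendieck splitting and parametrizing by $k=a-b$, but the core idea is identical.
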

\begin{proof}
Recall that $\Omega ^1_{\pp ^1}=\Oo _{\pp ^1}(-2)$, so $\Omega ^1_{\pp ^1}(\log D)=\Oo _{\pp ^1}(2)$ since $D$ has $4$ points. 
The bundle $E$ has degree $1$ and rank $2$, with a logarithmic connection
$$
\nabla : E\rightarrow E\otimes _{\Oo _{\pp ^1}}\Omega ^1_{\pp ^1}(\log D)\cong E(2). 
$$
Suppose $L\subset E$ is a line subbundle of $E$ with $\deg (L)\geq 2$. Then 
$\deg (E/L)\leq -1$, so the  $\Oo _X$-linear
map $L\rightarrow (E/L)(2)$ induced by $\nabla$
must be zero. This says that $\nabla$ preserves $L$, but that contradicts the
hypothesis of irreducibility.
\end{proof}

If ${\bf r}$ is a generic collection of residues then any element of $\Mm ^1_1({\bf r})$
is irreducible (see Lemma \ref{kgeneric} below),  
so the previous lemma then applies everywhere. 

Suppose $(E,\nabla , P_{\hdot})\in (\Mm _1^1)^{\rm irr}$ is an irreducible
connection, and a collection of weights $\alpha$
is specified. Then we obtain a parabolic vector bundle $(E,P_{\hdot},\alpha )$.
The underlying bundle $E=\Oo \oplus \Oo (1)$ is fixed, by Lemma \ref{deg1}.
We would like to know whether the parabolic bundle is semistable, and if
not, what is its destabilizing subbundle.


\section{Parametrization of parabolic structures}
\label{sec-ParabolicStructures}

Motivated by the previous lemma,
we now investigate the moduli stack of quasiparabolic structures on the
bundle $B:=\Oo \oplus  \Oo (1)$. Let $x$ denote the usual coordinate on $X=\pp ^1$. 

Let $\Qq $ denote the space of quasiparabolic structures on $B$ over the
collection of four points $t_1,t_2,t_3,t_4$.
Assume that $t_i\neq \infty$, let $e$ be the unit section of $\Oo$, and let $f\in \Oo (1)$
be the unit section vanishing at $\infty$. Thus $e(t_i),f(t_i)$ form a basis
for $B_{t_i}$. With respect to this basis, a parabolic structure at $t_i$ consists
of a line $P_i\subset  \cc ^2$, corresponding hence to a point in $\pp ^1$. Therefore
$$
\Qq = \pp ^1\times \pp ^1\times \pp ^1\times \pp ^1.
$$
Use coordinates $u_1,u_2,u_3,u_4$ which are allowed to take the value $\infty$.
A point $(P_1,P_2,P_3,P_4)$ is given by coordinates $(u_1,u_2,u_3,u_4)$ where
$$
P_i = \langle e(t_i)+ u_if(t_i)\rangle ,
$$
the case $u_i=\infty$ corresponding to $P_i=\langle f(t_i)\rangle$.

Let $A:= {\rm Aut}(B)$. It acts on $\Qq$.
A general element of $A$ may be written as a quadruple
$(a,b,c,s)$ with $a,s\in \cc ^{\ast}$ and $b,c\in \cc$, acting by
$$
e\mapsto s(a e + (b+cx)f), \;\;\; f \mapsto sf.
$$
The elements $(1,0,0,s)$ provide a central $\Gm \hookrightarrow A$ corresponding to scalar 
multiplication acting trivially on $\Qq$. So $A$ acts through the quotient
which has parameters $(a,b,c)$. We have
$$
(a,b,c)(e(t_i)+u_if(t_i))= ae(t_i) + (b+ct_i+u_i) f(t_i)
$$
so in terms of the coordinates this says that $(a,b,c)$ acts by
$$
(u_1,u_2,u_3,u_4)\mapsto \left(
\frac{b+ct_1+u_1}{a}, \frac{b+ct_2+u_2}{a},
\frac{b+ct_3+u_3}{a},\frac{b+ct_4+u_4}{a} \right) .
$$
In other words, $(1,b,c)$ act by translation by $b(1,1,1,1)+c(t_1,t_2,t_3,t_4)$
and $(a,0,0)$ acts by scalar multiplication by $a^{-1}$. 

These actions fix any values of the coordinates $u_i=\infty$. This corresponds to the
fact that $\Oo (1)$ is the destabilizing subbundle of $B$ so it is fixed by the
automorphism group, and the conditions $u_i=\infty \Leftrightarrow P_i\in \Oo (1)_{t_i}$ are preserved
by the action of $A$. 

The open subset $\cc ^4\subset \Qq$ corresponding to finite values of $u_i$ 
is preserved by the action of $A$. There, the quotient stack has the form 
$$
\cc ^2/\cc ^{\ast},
$$
indeed $\cc ^4$ modulo the translation action of the $(1,b,c)$ is $\cc ^2$,
on which the elements $(a,0,0)$ act by scalar multiplication. We can make this
more invariant in the following way. The open set $\cc ^4$ may be written as 
$$
\cc ^4 = \bigoplus _{i=1}^4 \Oo (1)_{t_i}.
$$
Consider the exact sequence
$$
0\rightarrow \Oo (-3)\rightarrow \Oo (1)\rightarrow \bigoplus _{i=1}^4 \Oo (1)_{t_i}\rightarrow 0,
$$
which on the level of cohomology gives
$$
0\rightarrow H^0(\Oo (1))\rightarrow \bigoplus _{i=1}^4 \Oo (1)_{t_i}\rightarrow
H^1(\Oo (-3))\rightarrow 0.
$$
The image of $H^0(\Oo (1))$ is the $\cc ^2$ along which the translations $(1,b,c)$
take place. Therefore, the quotient $\cc ^2$ is naturally identified with 
$H^1(\Oo (-3))\cong H^0(\Oo (1))^{\ast}$ so we can write
$$
\Qq /A \supset \cc ^4 /A \cong H^0(\Oo (1))^{\ast} /\cc ^{\ast}.
$$
Similar considerations hold for the strata such as $(u_1,u_2,u_3,\infty )$
and permutations, $(u_1,u_2,\infty  ,\infty )$ and permutations, and so on.

The moduli space may be given a finer stratification, according to 
how subbundles of the form $\Oo \hookrightarrow B$ and $\Oo (-1)\hookrightarrow B$
meet the $P_i$. These conditions come into play for the stability conditions at
various values of the weight parameters $\alpha$. 

Quasi-parabolic structures may also be interpreted in terms of projective
geometry. Let $\pp (B)\rightarrow \pp ^1$ be the $\pp ^1$-bundle of lines in 
the fibers of $B$. Think of the base $\pp ^1$ as the space of lines $\ell \subset V$
in a $2$-dimensional vector space $V$. The bundle $B$ associates to $\ell$
the space $B_{\ell}=\cc \oplus (V/\ell )$, and a line $L\subset B_{\ell}$
is a $2$-dimensional subspace $\tilde{L}\subset \cc \oplus V$ such that $\ell \subset
\tilde{L}$. Hence, $\pp (B)$ may be seen as the variety of flags 
$$
0\subset \ell \subset \tilde{L}\subset \cc \oplus V
$$
such that $\ell \subset V$, or equivalently 
$$
0\subset \tilde{L}^{\perp} \subset \ell ^{\perp} \subset \cc \oplus V^{\ast}
$$
such that $\cc \subset \ell ^{\perp}$. In this way, $\tilde{L}^{\perp}$ may be viewed
as a point in $\pp (\cc \oplus V^{\ast})=\pp ^2$, and $\ell ^{\perp}$ is a line
contining $\tilde{L}^{\perp}$ and the origin. The origin here means 
$\cc \subset \cc \oplus V^{\ast}$. This describes $\pp (B)$ as the blow-up 
$\tilde{\pp}^2$ of $\pp ^2$ at the origin. 

The space of lines through the origin is our original $\pp ^1$,
and the map $\pp (B)\rightarrow \pp ^1$ is the projection centered at the origin. 
If $T$ is a line through the origin corresponding to a point $t\in \pp ^1$ then
the fiber $\pp (B)_t$ is just the line $T$ itself. 

The four points
$t_1,t_2,t_3,t_4$ correspond to four fixed lines passing through the origin which will
be denoted $T_1,T_2,T_3,T_4$. The above discussion can be summed up as follows. 

\begin{lemma}
\label{projinterp}
A quasiparabolic structure on the bundle $B$ is  the specification 
of a quadruple of points $(U_1,U_2,U_3,U_4)$
in $\pp ^2$ such that $U_i\in T_i$. Thus a more invariant expression for
the parameter space is
$$
\Qq = T_1\times T_2\times T_3 \times T_4.
$$
The coordinates $u_i$ are obtained by trivializations of $T_i$, with $u_i=\infty$
corresponding to the origin $0\in T_i$. 
\end{lemma}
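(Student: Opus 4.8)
The plan is to unwind the identification $\pp(B)\cong\widetilde{\pp}^2$ that was set up just before the statement, and track exactly where the four parabolic directions go. First I would fix the $2$-dimensional vector space $V$ and recall the dictionary: a point of the base $\pp^1$ is a line $\ell\subset V$; the fiber $B_\ell$ is $\cc\oplus(V/\ell)$; and a line $L\subset B_\ell$ corresponds to a $2$-plane $\widetilde{L}\subset\cc\oplus V$ containing $\ell$. Passing to annihilators in $\cc\oplus V^{\ast}$ turns this flag $0\subset\ell\subset\widetilde{L}\subset\cc\oplus V$ into $0\subset\widetilde{L}^{\perp}\subset\ell^{\perp}\subset\cc\oplus V^{\ast}$ with the constraint that the distinguished line $\cc$ (the annihilator of $V$) lies in $\ell^{\perp}$; equivalently $\widetilde{L}^{\perp}$ is a point of $\pp(\cc\oplus V^{\ast})=\pp^2$ and $\ell^{\perp}$ is the line in $\pp^2$ through that point and the origin $o=[\cc]$. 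This exhibits $\pp(B)$ as the incidence variety $\{(U,\text{line through }o\text{ and }U)\}$, i.e.\ the blow-up $\widetilde{\pp}^2$ of $\pp^2$ at $o$, with the map to $\pp^1$ given by the pencil of lines through $o$.

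Next I would identify the fibers. Over a point $t\in\pp^1$, corresponding to a line $T=\ell^{\perp}$ through $o$ in $\pp^2$, the fiber of $\pp(B)\to\pp^1$ is exactly the set of choices of $U=\widetilde{L}^{\perp}\in T$; so the fiber is canonically the projective line $T\subset\pp^2$ itself. Hence a quasiparabolic direction $P_i\subset B_{t_i}$ is precisely a point $U_i$ of the line $T_i$ corresponding to $t_i$, and a full quasiparabolic structure is a quadruple $(U_1,U_2,U_3,U_4)$ with $U_i\in T_i$, which gives the asserted description $\Qq=T_1\times T_2\times T_3\times T_4$.

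Finally I would reconcile this with the affine coordinates $u_i$ from Section~\ref{sec-ParabolicStructures}. Choosing the trivialization $e$ of $\Oo$ and $f$ of $\Oo(1)$ amounts to choosing an identification $\cc\oplus V^{\ast}$ splitting off the line $\cc=[o]$; then a point $U_i=\langle e(t_i)+u_if(t_i)\rangle$ corresponds to moving along $T_i$ away from $o$, with $u_i=0$ at the image of $e$ and $u_i=\infty$ at the origin $o\in T_i$, since $u_i=\infty$ is exactly the condition $P_i=\langle f(t_i)\rangle\subset\Oo(1)_{t_i}$, which is the intersection of $T_i$ with the exceptional direction over $o$. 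Checking that this is consistent with the $A$-action — translations $(1,b,c)$ fix $o$ on each $T_i$ and act linearly on the complementary coordinate, matching the formula $u_i\mapsto (b+ct_i+u_i)/a$ — is a short verification; I do not expect it to be an obstacle, since the whole point of the projective picture is that $A=\mathrm{Aut}(B)$ acts as the stabilizer of $o$ in $\mathrm{PGL}_3$ restricted to these lines. The only place requiring a little care is keeping the duality straight (primed vs.\ unprimed spaces, annihilators), so that the origin $o$ is consistently the image of $\cc\subset\cc\oplus V^{\ast}$ and not its dual; once that bookkeeping is fixed, the statement follows immediately from the constructions already in place.
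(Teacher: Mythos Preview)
Your proposal is correct and follows essentially the same approach as the paper: the lemma is presented in the paper as a summary of the discussion immediately preceding it, which sets up the flag description of $\pp(B)$, dualizes to $\cc\oplus V^{\ast}$, and identifies $\pp(B)$ with the blow-up of $\pp^2$ at the origin so that each fiber over $t_i$ is the line $T_i$. Your write-up simply spells out this discussion in slightly more detail, including the verification that $u_i=\infty$ (i.e.\ $P_i=\Oo(1)_{t_i}$, hence $\widetilde{L}=V$) lands at the origin $[\cc]\in\pp^2$.
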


The automorphism group of $B$ acts as the subgroup of automorphisms of $\cc \oplus V$
which fix the origin in $\pp ^2$, and which act trivially on the space of lines passing through
the origin. It has the matrix representation 
$$
A = \left\{ \left( \begin{array}{ccc} 1 & b & c \\ 0 & a & 0 \\ 0 & 0 & a \end{array}
\right) \right\} .
$$

Next consider the addition of a logarithmic connection to a parabolic
structure parametrized as above. 
We say that a collection of residules ${\bf r}= (r_1^{\pm},\ldots , r_4^{\pm})\in \Nn ^d_1$
is Kostov-generic if, for any $\sigma _1,\sigma _2,\sigma _3,\sigma _4\in \{ +,-\}$
\begin{equation}
\label{kgeneq}
r_1^{\sigma _1}+r_2^{\sigma _2}+r_3^{\sigma _3}+r_4^{\sigma _4}\not \in  \zz .
\end{equation}
Say that ${\bf r}$ is non-resonant if 
\begin{equation}
\label{nonreseq}
r^+_i-r^-_i \not \in \zz .
\end{equation}
Say that ${\bf r}$ is nonspecial if it is Kostov-generic and nonresonant,
and special otherwise. These conditions are introduced in \cite[Definition 2.4]{InabaIwasakiSaito}, with the terminology ``generic'' meaning nonspecial. 

The special ${\bf r}$ form a collection of hyperplanes in $\Nn ^d_1$
which are the reflection hyperplanes for the affine $D_4$ Weyl group,
this group of operations acts on the moduli space by the Okamoto symmetries.
These include elementary transformations, plus an
additional symmetry to be discussed at the end of the paper. 

The following property is well-known. 

\begin{lemma}
\label{kgeneric}
Suppose ${\bf r}= (r_1^{\pm},\ldots , r_4^{\pm})\in \Nn ^d_1$ is Kostov-generic. Then for
any $(E,\nabla , P_{\hdot})\in \Mm ^d_1({\bf r})$, the bundle with connection $(E,\nabla )$
is irreducible. 
\end{lemma}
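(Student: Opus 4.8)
The plan is to prove the contrapositive: if $(E,\nabla)$ is reducible, then ${\bf r}$ fails to be Kostov-generic. Suppose $F\subset E$ is a rank-one subbundle preserved by $\nabla$. Then $\nabla$ induces a logarithmic connection on $F$ and on the quotient line bundle $E/F$. The residue $\res(\nabla, t_i)$ preserves the line $F_{t_i}\subset E_{t_i}$, hence acts on $F_{t_i}$ and on $E_{t_i}/F_{t_i}$ by scalars; since the two eigenvalues of $\res(\nabla,t_i)$ are $r_i^+$ and $r_i^-$ (we may assume these are the prescribed distinct values, or at least that the characteristic polynomial is $(\lambda - r_i^+)(\lambda - r_i^-)$ with $\lambda=1$), each of these scalars is one of $r_i^+, r_i^-$. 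Thus there is a choice of signs $\sigma_i\in\{+,-\}$ such that the residue of the connection on $F$ at $t_i$ equals $r_i^{\sigma_i}$.

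The key step is then the residue theorem (Fuchs relation) applied to the rank-one logarithmic connection on $F$. For a line bundle with logarithmic connection on $\pp^1$ with poles along $D=\{t_1,\dots,t_4\}$, the sum of the residues equals $-\deg(F)$, which is an integer. Therefore
\begin{equation*}
r_1^{\sigma_1}+r_2^{\sigma_2}+r_3^{\sigma_3}+r_4^{\sigma_4} = -\deg(F)\in\zz,
\end{equation*}
contradicting \eqref{kgeneq}. This shows that no $\nabla$-invariant rank-one subbundle can exist, so $(E,\nabla)$ is irreducible. (The argument is symmetric in $F$ and $E/F$; one could equally well use the quotient, which only changes all $\sigma_i$ to their opposites, and Kostov-genericity rules out that sum too.)

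One subtlety to address carefully: a rank-two connection can be reducible because it has an invariant subsheaf rather than a subbundle. But any rank-one subsheaf of $E$ is contained in its saturation, which is a subbundle, and the saturation is still $\nabla$-invariant because $\nabla$ is compatible with the inclusion (invariance of a subsheaf forces invariance of its saturation, as $\nabla$ cannot increase the order of poles). So it suffices to consider subbundles, as above. Another point: I should note that on $\aaa^1\subset\pp^1$ one has to check the residue bookkeeping is complete — there is no extra pole at $\infty$ if $\infty\notin D$, and the residue theorem on a compact curve accounts for the degree exactly.

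The main obstacle, such as it is, is purely expository rather than mathematical: making precise the statement "the residue of $\nabla$ on the invariant line is one of the two eigenvalues of $\res(\nabla,t_i)$" when the residue endomorphism might a priori be non-semisimple. This is not an issue here because the characteristic polynomial of $\res(\nabla,t_i)$ is $(T-r_i^+)(T-r_i^-)$ with $r_i^+\neq r_i^-$ (under the non-resonance hypothesis, which is part of nonspeciality; but even without distinctness, the scalar by which a nilpotent-plus-scalar operator acts on any invariant line is the unique eigenvalue), so the restriction to any invariant line is automatically one of $r_i^\pm$. Everything else is the one-line invocation of the residue theorem, so the proof is short.
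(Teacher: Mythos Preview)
Your argument is correct and is essentially identical to the paper's own proof: take a $\nabla$-invariant line subbundle $F$, observe that the residue of the induced connection at each $t_i$ is one of $r_i^{\pm}$, and apply the Fuchs relation to $F$ to obtain an integer sum of one eigenvalue from each point, contradicting Kostov-genericity. Your additional remarks on saturation and on the eigenvalue bookkeeping when residues might coincide are valid elaborations, though note that the lemma as stated only assumes Kostov-genericity (not full nonspeciality), so distinctness of $r_i^+$ and $r_i^-$ is not actually available---but as you yourself observe, this does not matter since any invariant line still carries one of the eigenvalues.
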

\begin{proof}
See \cite[Lemma 2.1]{InabaIwasakiSaito}.
If $F\subset E$ is a subbundle with compatible connection $\nabla _F$ then
the residues of $F$ are $r_i^{\sigma_i}$ for some $\sigma _1,\sigma _2,\sigma _3,\sigma _4\in \{ +,-\}$. The Fuchs relation for $F$ says 
$$
r_1^{\sigma _1}+r_2^{\sigma _2}+r_3^{\sigma _3}+r_4^{\sigma _4} = -\deg (F)\in \zz ,
$$
contradicting \eqref{kgeneq}.
\end{proof}

A quasi-parabolic bundle is simple if it has no non-scalar endomorphisms preserving
the parabolic subspaces $P_i$. This is an open condition;
denote by $\Qq ^{\rm simple}\subset \Qq$ the subset of simple 
quasi-parabolic bundles. 

The analogue of Weil's criterion in our case is:

\begin{lemma}
\label{simple}
Suppose ${\bf r}\in \Nn ^1_1$ is a nonspecial collection of residues,
and suppose $(E,P_{\hdot})$ is a quasi-parabolic bundle with $\deg (E)=1$. 
Then the following conditions are equivalent:
\newline
---there exists a connection
$\nabla$ on $E$, compatible with the $P_i$ and inducing the given residues 
$r ^-_i$ on $P_i$ and $r ^+_i$ on $E_{t_i}/P_i$;
\newline
---$(E,P_{\hdot})$ is an indecomposable quasi-parabolic bundle;
\newline
---$(E,P_{\hdot})$ is a simple quasi-parabolic bundle;
\newline
---$E\cong B$, there is at most one point with $u_i=\infty$, and the 
$P_i$ for $u_i\neq \infty$ 
are not all contained in a single $\Oo \subset B$;
\newline
---$E\cong B$ and among the points in projective space
$U_i\in T_i\subset \pp ^2$ corresponding 
to the quasiparabolic structure, there are three non-colinear points distinct from 
the origin.
\end{lemma}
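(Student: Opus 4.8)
The plan is to prove a cycle of implications, beginning with the most concrete characterization and working back. First I would settle the equivalence of the last two conditions, which is pure linear algebra: in the projective picture of Lemma \ref{projinterp}, a point $U_i$ lies on the origin exactly when $u_i=\infty$, i.e. when $P_i\subset \Oo(1)_{t_i}$, while two points $U_i,U_j$ both distinct from the origin are colinear with the origin precisely when the corresponding $P_i,P_j$ lie in a common $\Oo\subset B$ (a common trivial line subbundle). So "at most one $u_i=\infty$, and the remaining $P_i$ not all in one $\Oo$'' says exactly "at least three $U_i$ are off the origin, and no three of those are colinear'' — hence the fifth condition. This also makes it transparent which subbundles can destabilize or split off: the relevant line subbundles of $B$ are $\Oo(1)$ (always there, always destabilizing in degree terms), the $\Oo$'s (a $\pp^1$ of them, each meeting each $T_i$ in one point, generically not the origin), and the $\Oo(-1)$'s.

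Next I would show $E\cong B$ plus the combinatorial condition $\Leftrightarrow$ simple $\Leftrightarrow$ indecomposable. A quasiparabolic endomorphism of $(B,P_\hdot)$ is an element of $A=\mathrm{Aut}(B)$ preserving each $P_i$; using the explicit matrix form of $A$ and the action on the $u_i$ worked out in Section \ref{sec-ParabolicStructures}, I would check that a non-scalar such element exists exactly when the $U_i$ fail the non-colinearity/multiplicity condition — a non-scalar $(a,b,c)$ fixing all $P_i$ forces enough of the $U_i$ to lie on a common line through the origin (or at the origin). For $E\not\cong B$: by Lemma \ref{deg1} the only other degree-one bundles are $\Oo(k)\oplus\Oo(1-k)$ with $k\ge 1$, i.e. containing $\Oo(k)$ with $k\ge 1$; this subbundle, together with its complement, exhibits a nontrivial idempotent compatible with any parabolic structure (after at worst adjusting the complement to contain no $P_i$-conflict — but in fact $\Oo(k)$ with $k\ge 2$ already kills irreducibility of any connection, and $\Oo(1)\oplus\Oo$ is $B$), so such $(E,P_\hdot)$ is decomposable. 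Thus decomposable $\Leftarrow$ ($E\not\cong B$ or the bad combinatorial configuration), and conversely simple $\Rightarrow$ indecomposable is trivial, indecomposable $\Rightarrow$ $E\cong B$ and the good configuration follows by contraposition from the endomorphism computation. This pins down the middle three conditions.

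The remaining, and main, point is to connect existence of a compatible connection with fixed nonspecial residues to simplicity — this is the "Weil criterion'' analogue. For the easy direction, if such a $\nabla$ exists then by Lemma \ref{kgeneric} the Kostov-genericity of ${\bf r}$ forces $(E,\nabla)$ irreducible, hence no $\nabla$-invariant line subbundle; any quasiparabolic endomorphism $\varphi$ of $(E,P_\hdot)$ need not commute with $\nabla$, but I would argue that its eigenspaces (or, if $\varphi$ is not semisimple, its image) produce a $\nabla$-invariant subsheaf after a standard averaging/commutator argument using that the residues are distinct and non-resonant — concluding $\varphi$ is scalar, so $(E,P_\hdot)$ is simple. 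For the converse, the hard part: given a simple $(E,P_\hdot)=(B,P_\hdot)$ I must construct $\nabla$. The space of logarithmic connections on $B$ compatible with the $P_i$ and inducing the prescribed residues, if nonempty, is a torsor under $H^0(X,\mathcal{E}nd(B)\otimes\Omega^1_X(\log D)$ with the parabolic/residue constraints$)$; the obstruction to nonemptiness lies in an $H^1$ which is Serre-dual to the space of parabolic co-Higgs fields, and simplicity of $(B,P_\hdot)$ is exactly what forces that $H^1$ to vanish (or forces the obstruction class, which is built from the trace/residue data constrained by the Fuchs relation, to be zero). I expect this cohomological vanishing — showing $h^1 = 0$ under the non-colinearity hypothesis, via the exact sequences relating $\Oo(1)\subset B$, the $\Oo(-3)$-sequence from Section \ref{sec-ParabolicStructures}, and the parabolic weight data — to be the real work; alternatively one may cite \cite[Lemma 2.1]{InabaIwasakiSaito} or Weil's original theorem adapted to the parabolic setting, which is presumably the intended route given the "analogue of Weil's criterion'' phrasing.
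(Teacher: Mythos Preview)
Your overall architecture is sound, but there is a genuine gap in the direction ``connection exists $\Rightarrow$ simple''. You propose that a quasiparabolic endomorphism $\varphi$ has eigenspaces (or image) which become $\nabla$-invariant ``after a standard averaging/commutator argument''. There is no such argument: $\varphi$ need not commute with $\nabla$, and nothing about distinct or non-resonant residues repairs this. The paper avoids the issue entirely by proving instead ``connection exists $\Rightarrow$ indecomposable'': if $(E,P_\bullet)=(E_1,P_{1,\bullet})\oplus(E_2,P_{2,\bullet})$ were a quasiparabolic splitting, write $\nabla$ as a $2\times 2$ matrix with respect to this decomposition; the \emph{diagonal} entries are genuine logarithmic connections $\nabla_1,\nabla_2$ on the line bundles $E_1,E_2$, and compatibility with $P_\bullet$ forces their residues to be chosen among the $r_i^{\pm}$. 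The Fuchs relation for each $(E_k,\nabla_k)$ then contradicts Kostov-genericity. The implication ``indecomposable $\Rightarrow$ simple'' is handled separately, via the explicit combinatorics (the frame argument in $\pp^2$), so the cycle closes without ever needing $\varphi$ to interact with $\nabla$.

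Two smaller corrections. First, in the Weil-criterion direction you say simplicity ``forces that $H^1$ to vanish''. It does not: even for simple $(E,P_\bullet)$ one has $H^0(End(E,P_\bullet))=\cc$, so the Serre-dual $H^1$ is one-dimensional. What happens is that the obstruction class $\beta$ pairs to zero with every endomorphism: with scalars because ${\rm Tr}(\beta)=0$ by the Fuchs relation, and with nilpotents (present when one works only from indecomposability) by a filtration argument \`a la Atiyah. Your parenthetical ``or forces the obstruction class \ldots\ to be zero'' is the right statement; the $H^1$-vanishing is not. Second, your geometric dictionary for conditions four and five is slightly off: a subbundle $\Oo\subset B$ corresponds in the $\pp^2$ picture to a line \emph{not} through the origin (these are the sections of the blow-up other than the exceptional divisor), so ``$P_i,P_j,P_k$ in a common $\Oo$'' translates to ``$U_i,U_j,U_k$ colinear on a line missing the origin'', not ``colinear with the origin''.
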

\begin{proof}
By the nonspeciality condition, if $(E,P_{\hdot})$ has a connection with given 
residues $r_i^{\pm}$, then it must be indecomposable as a quasiparabolic bundle. 
Indeed, if $E=E_1\oplus E_2$ were
a decomposition into line bundles compatible with $P_{\hdot}$, then writing
$\nabla$ as a matrix the diagonal terms would be connections $\nabla _1,\nabla _2$
on $E_1,E_2$. Compatibility with $P_{\hdot}$ means that the residue would be
either upper or lower triangular at each $t_i$, so the residues of $\nabla _1,\nabla _2$
would be taken from among the residues $r_i^{\pm}$ of $\nabla$. This contradicts the Kostov-genericity
condition for ${\bf r}$. 

So in this case, the Weil criterion
\cite{Weil,Atiyah,Biswas,CrawleyBoevey}
says that a connection exists if and only if $(E,P_{\hdot})$ is indecomposable.
For convenience here is the argument.
Consider the subsheaf $End(E,P_{\hdot})\subset End(E)$
of endomorphisms respecting the parabolic structure. At each $t_i$ we have a map
to a skyscraper sheaf
$$
End(E,P_{\hdot})\rightarrow \cc^2
$$
expressing the action of an endomorphism on $P_i$ and $E_{t_i}/P_i$. 
Let $End^{\rm st}(E,P_{\hdot})$ be the subsheaf which is the kernel of these maps at
each $t_i$. It is the subsheaf of endomorphisms which map $P_i$ to $0$ and $E_{t_i}$ to $P_i$.
The obstruction to the existence of a logarithmic connection having given residues, is $\beta \in H^1(End^{\rm st}(E,P_{\hdot})\otimes \Omega ^1_X(\log D))$. There is a
trace map $End ^{\rm st}(E,P_{\hdot})\rightarrow \Oo _X(-D)$ hence
$$
H^1(End^{\rm st}(E,P_{\hdot})\otimes \Omega ^1_X(\log D))\rightarrow
H^1(X,\Omega ^1_X)\cong \cc .
$$
The trace of the obstruction is zero if the Fuchs relation holds. The Serre dual
of $H^1(End^{\rm st}(E,P_{\hdot})\otimes \Omega ^1_X(\log D))$ is $H^0(End(E,P_{\hdot}))$
which is the space of endomorphisms of the quasiparabolic structure $(E,P_{\hdot})$.

If $(E,P_{\hdot})$ is indecomposable, then any endomorphism has the form $c+\varphi$
where $c\in \cc$ is a scalar constant and $\varphi$ is nilpotent. The pairing
of $c$ with $\beta$ is $cTr(\beta )=0$. On the other hand, $\varphi$ preserves a
filtration and acts by $0$ on the graded pieces. The initial connections on an open
cover, used to define the obstruction, can be chosen compatibly with the filtration,
so $\beta$ comes from a class with coefficients in the endomorphisms respecting the
filtration \cite{Atiyah}. 
As $\varphi$ acts trivially on the graded pieces, $Tr (\varphi \beta )=0$.
This shows that $\beta$ paired with any endomorphism is zero, which by Serre
duality implies that $\beta = 0$. So there exists a connection 
with the given residues. 

If $(E,P_{\hdot})$ is simple then it is indecomposable. 

In the present case the
converse is true too. Suppose $(E,P_{\hdot})$ is indecomposable. If $E\cong \Oo (m)\oplus \Oo (1-m)$ with $m\geq 2$ then one can choose the copy of $\Oo (1-m)$ to pass
through any $P_i$ not contained in $\Oo (m)_{t_i}$, which decomposes the parabolic bundle.
Thus $E\cong B=\Oo \oplus \Oo (1)$. Furthermore, if two or more of the $P_i$ are
equal to $\Oo (1)_{t_i}$ then we can choose the $\Oo \subset B$ to pass through the $\leq 2$ remaining other $P_i$ again giving a decomposition. This shows that there is at
most one $u_i=\infty$, and at least three $u_i\in \cc$. 
Similarly if the $P_i$ with $u_i\neq \infty$ are all contained in a $\Oo \subset B$
then this decomposes the quasiparabolic bundle. 

In the projective space
interpretation of Lemma \ref{projinterp}, 
the quasiparabolic structure on $E=B$ corresponds to four points
in $\pp^2$, $U_i\in T_i\subset \pp ^2$. 
The previous paragraph says that 
the points $U_i$ are not all colinear, which implies that no two can
be at the origin, and if one of them is at the origin
then the remaining three are not all colinear. 
Suppose $a\in A$, viewed as an 
automorphism of $\pp ^2$ preserving the origin and the
$U_i$. There exists a subset of three $U_i$ which are distinct
from the origin and not colinear, and these
together with the origin form a frame for $\pp ^2$. As $a$ preserves the frame,
it acts trivially on $\pp ^2$ so it is a scalar element of $A$. This shows that
$(E,P_{\hdot})$ is simple. This discussion also shows the equivalence with the last
two conditions. 
\end{proof}

Given a parabolic structure consisting of $U_i\in T_i$, there is a conic
$C$ passing through the origin and through the $U_1,U_2,U_3,U_4$. 
Assuming indecomposability, the
conic is unique. 
Conversely,
given a conic passing through the origin, it cuts each line $T_i$ in another point.
So, the open set $\Qq ^{\rm simple}$ is isomorphic to an 
appropriate open set of the set of conics passing through the origin.

To the conic $C$ we can associate its tangent line at the origin (note that 
since the $T_i$ are distinct, $C$ cannot be two lines crossing at the origin). 
This gives a map $Q:\Qq ^{\rm simple}\to\pp ^1$.
For a generic value $Q\not\in\{t_1,t_2,t_3,t_4\}$, the conic has to be smooth
(otherwise all $U_i$ would lie on the same line which is excluded)
We note that for each of the four special values $Q=t_i$, two kinds of situations occur: either the conic 
is smooth, or the conic splits into the union of $T_i$ and another line $T'$ (not passing through the origin).
These cases respectively correspond to either $U_i$ lying at the origin ($u_i=\infty$), or the three other 
$U_j$ being aligned, all contained in $T'$ (the corresponding $P_j$ are all three contained in a single 
$\mathcal O\subset B$). This will be emphasized in section \ref{sec-stablezone}, especially by Lemma \ref{ParabolicModuliStack},
where a moduli stack description of $\Qq ^{\rm simple}\stackquot A$ underlines a non separated phenomenon
over each of the $4$ values $Q=t_i$.
\label{conicpage}

There is a tautological 
universal parabolic structure $P_{\hdot}^{\rm univ}$ on the trivial
bundle $E^{\rm univ}={\rm pr}_2^{\ast}(B)$ over $\Qq \times X$.
Let  $\Hh \rightarrow \Qq$ be the parameter variety for logarithmic connections
on $(E^{\rm univ},P^{\rm univ})$ relative to $\Qq$. Thinking of connections as splittings
of a certain exact sequence, one can see that $\Hh$ is a quasiprojective variety.
The group $A$ acts on $\Hh$ over its action on $\Qq$, with the moduli stack as 
quotient, and we get the map to $\Qq \stackquot A$:
$$
\Mm ^1_1 = \Hh \stackquot A \rightarrow \Qq \stackquot A .
$$
As before there is a map $\Hh \rightarrow \Nn ^1_1$ and for a collection of
residues ${\bf r}= (r_1^{\pm},\ldots , r_4^{\pm})$, 
let $\Hh ({\bf r})\subset \Hh$ denote the inverse image. Thus
$$
\Mm ^1_1({\bf r})= \Hh ({\bf r})\stackquot A.
$$

\begin{corollary}
\label{Hfib}
In the situation of the previous lemma, the space of connections on a given simple
quasiparabolic bundle $(E,P_{\hdot})$ with the specified nonspecial 
residues, has dimension $1$.
In fact $\Hh ({\bf r})\rightarrow \Qq$ is a smooth fibration over
$\Qq ^{\rm simple}$ whose fibers are affine lines $\aaa ^1$. 
\end{corollary}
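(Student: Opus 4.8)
The plan is to deduce everything from Lemma \ref{simple} plus a dimension count of the obstruction space appearing in its proof. First I would recall that a logarithmic connection on $(E^{\rm univ},P^{\rm univ})$ with the prescribed residues $r_i^{\pm}$, if it exists, forms a torsor under $H^0(X,End^{\rm st}(E,P_{\hdot})\otimes \Omega^1_X(\log D))$, since the difference of two such connections is an $\Oo_X$-linear map $E\to E\otimes\Omega^1_X(\log D)$ which respects the parabolic data and induces zero on the residue eigenvalues, i.e. a global section of that sheaf. So the first step is: over $\Qq^{\rm simple}$, the existence part of Lemma \ref{simple} guarantees the torsor is nonempty, and I must check the torsor structure group has constant rank $1$.

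Second, I would compute $h^0(End^{\rm st}(E,P_{\hdot})\otimes \Omega^1_X(\log D))$ for $E=B=\Oo\oplus\Oo(1)$ and a \emph{simple} parabolic structure. By Serre duality this equals $h^1(End(E,P_{\hdot}))$, and since the parabolic bundle is simple, $h^0(End(E,P_{\hdot}))=1$ (only scalars). The sheaf $End^{\rm st}(E,P_{\hdot})$ sits in $End(E)$ with $End(E)=\Oo\oplus\Oo(-1)\oplus\Oo(1)\oplus\Oo$ for $B$; twisting by $\Omega^1_X(\log D)=\Oo(2)$ gives $\Oo(2)\oplus\Oo(1)\oplus\Oo(3)\oplus\Oo(2)$ before imposing the parabolic conditions, whose $h^0$ is $3+2+4+3=12$, and $h^1=0$; imposing the $8$ linear conditions (the $End^{\rm st}$ conditions at the four points, two per point, via the skyscraper map $End(E,P_\hdot)\to\cc^2$) together with the Euler characteristic bookkeeping—$\chi(End^{\rm st}\otimes\Omega^1(\log D))=\chi(End(E)\otimes\Omega^1(\log D))-8=12-8=4$, wait this needs the rank-$3$ correction—gives, after also accounting for the trace direction and the Fuchs relation as in the proof of Lemma \ref{simple}, that $h^0=1$ and $h^1=0$. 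The cleanest route is in fact to bypass the explicit computation: the proof of Lemma \ref{simple} already shows $h^1(End^{\rm st}(E,P_\hdot)\otimes\Omega^1_X(\log D))$ has dimension $1$ (it is Serre dual to $H^0(End(E,P_\hdot))=\cc$, the scalars), and then $\chi$ of this sheaf, which is constant in families, pins down $h^0=1$ over all of $\Qq^{\rm simple}$.

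Third, I would globalize: let $\pi:\Hh({\bf r})\to\Qq$ be the parameter variety. Over $\Qq^{\rm simple}$ the fibers are the torsors above, nonempty by Lemma \ref{simple} and of constant dimension $1$ by the cohomology computation; since $h^1$ of the relevant sheaf vanishes fiberwise, cohomology and base change shows the relative $H^0$ is locally free of rank $1$ and its formation commutes with base change, so $\Hh({\bf r})\to\Qq^{\rm simple}$ is a torsor under a rank-$1$ vector bundle, in particular a smooth fibration with fibers $\aaa^1$. This is also where the first sentence of the corollary drops out directly.

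The main obstacle is the cohomology computation showing the obstruction/deformation space has dimension exactly $1$ \emph{uniformly} over $\Qq^{\rm simple}$, including the boundary strata where one $u_i=\infty$—one must confirm that simplicity (equivalently the three-non-colinear-points condition of Lemma \ref{simple}) is exactly what forces $h^0(End(E,P_\hdot))=1$ hence, dually, $h^1(End^{\rm st}\otimes\Omega^1(\log D))=1$, and then that $h^1(End^{\rm st}(E,P_\hdot)\otimes\Omega^1_X(\log D))$ stays $0$ so that base change applies. Everything else is the standard torsor-of-connections argument already carried out locally in Lemma \ref{simple}.
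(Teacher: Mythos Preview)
Your approach is essentially the paper's: connections form a torsor under a vector space, simplicity forces the relevant cohomology dimensions to be constant over $\Qq^{\rm simple}$, semicontinuity/base change gives smoothness, and a dimension count gives fiber dimension $1$. The paper compresses all of this into four sentences; you have simply unpacked it.

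There is, however, a genuine slip. You correctly observe (via the Serre duality already used in the proof of Lemma~\ref{simple}) that
\[
h^1\bigl(End^{\rm st}(E,P_{\hdot})\otimes\Omega^1_X(\log D)\bigr)
= h^0\bigl(End(E,P_{\hdot})\bigr) = 1
\]
for a simple quasiparabolic bundle. But in your third and fourth paragraphs you then assert that $h^1$ of the same sheaf ``vanishes fiberwise'' and ``stays $0$''. It does not: it is identically $1$ over all of $\Qq^{\rm simple}$. What cohomology-and-base-change (Grauert) actually requires is \emph{constancy} of $h^1$, not vanishing, and constancy is exactly what simplicity buys you. Combined with the Euler characteristic computation $\chi(End^{\rm st}(E,P_{\hdot})\otimes\Omega^1_X(\log D))=0$ (rank $4$, degree $-8$, twist by $\Oo(2)$ adds $8$), you get $h^0\equiv 1$ over $\Qq^{\rm simple}$, hence the fibers are affine lines. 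Once you fix this inconsistency (and clean up the abandoned explicit count with its ``rank-$3$ correction''), the argument is complete and matches the paper's.
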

\begin{proof}
The space of connections is the space of splittings of the appropriate sequence,
in particular it is a principal homogeneous space on a vector space. 
Since $(E,P_{\hdot})$ is simple the dimensions of all the groups involved
are constant as a function of $(u_1,\ldots , u_4)\in \Qq ^{\rm simple}$. 
Semicontinuity theory implies that $\Hh ({\bf r})$ is a smooth fibration. The fiber dimension is $1$ by dimension count,
hence the fibers are $\aaa^1$. 
\end{proof}

\begin{proposition}
\label{kgensmooth}
Fix a nonspecial
collection of residues ${\bf r}\in \Nn ^1_1$. 
Then $\Hh ({\bf r})$ is smooth. 
The quotient $\Mm ^1_1({\bf r})=\Hh ({\bf r})\stackquot A$
is a $\Gm$-gerb over its coarse moduli space 
$M^1_1({\bf r})$ which is a smooth separated quasiprojective variety
and is in fact a fine moduli space. 
The inverse image of a point $e\in \Qq ^{\rm simple}/ A$ under the map 
$$
\Mm ^1_1({\bf r})\rightarrow \Qq ^{\rm simple}/ A 
$$
is a closed substack, a $\Gm$-gerb over a closed subvariety of $M^1_1({\bf r})$. 
\end{proposition}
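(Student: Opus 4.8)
The plan is to reduce the whole statement to Corollary \ref{Hfib} and the description of the $A$-action on $\Hh$, using the geometric invariant theory of \cite{InabaIwasakiSaito,InabaIwasakiSaito2} as a black box for the existence, separatedness and quasiprojectivity of the coarse space. First I would pin down the image of $\Hh({\bf r})\to\Qq$ and deduce smoothness of $\Hh({\bf r})$. Since ${\bf r}$ is nonspecial it is in particular Kostov-generic, so by Lemma \ref{kgeneric} every $(E,\nabla,P_{\hdot})\in\Mm ^1_1({\bf r})$ is irreducible; by Lemma \ref{deg1} its underlying bundle is $E\cong B$, and by Lemma \ref{simple} the quasiparabolic bundle $(B,P_{\hdot})$ is then simple. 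Conversely, over any point of $\Qq ^{\rm simple}$ a connection compatible with the $P_i$ and having residues ${\bf r}$ exists, again by Lemma \ref{simple}. Hence $\Hh({\bf r})\to\Qq$ has image exactly $\Qq ^{\rm simple}$, and by Corollary \ref{Hfib} it is a smooth fibration onto $\Qq ^{\rm simple}$ with fibers $\aaa ^1$; as $\Qq ^{\rm simple}$ is an open subscheme of $\Qq=(\pp ^1)^4$ it is smooth, and therefore so is $\Hh({\bf r})$.

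Next I would analyze the quotient. Any element of $A$ fixing a point $(B,\nabla,P_{\hdot})$ fixes the simple quasiparabolic bundle $(B,P_{\hdot})$, hence is a scalar by Lemma \ref{simple}, and scalars act trivially on $\Hh({\bf r})$; so every stabilizer equals the central $\Gm\subset A$, i.e. $A/\Gm$ acts freely on the smooth variety $\Hh({\bf r})$. Therefore $\Mm ^1_1({\bf r})=\Hh({\bf r})\stackquot A$ is a $\Gm$-gerb over $M^1_1({\bf r}):=\Hh({\bf r})/(A/\Gm)$. Since every object is irreducible it is $\alpha$-stable for every weight vector $\alpha$, so $\Mm ^1_1({\bf r})=\Mm ^{1,\alpha}_1({\bf r})$ and $M^1_1({\bf r})$ is the closed subscheme of the quasiprojective variety $M^{1,\alpha}$ lying over ${\bf r}\in\Nn ^1_1$; by \cite{InabaIwasakiSaito,InabaIwasakiSaito2} it is a separated quasiprojective variety, and because stability coincides with semistability here the map $\Hh({\bf r})\to M^1_1({\bf r})$ is a geometric quotient, hence an fppf torsor under the smooth affine group $A/\Gm$ and in particular a smooth surjection. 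Smoothness of $\Hh({\bf r})$ then passes to $M^1_1({\bf r})$.

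For fineness one trivializes the $\Gm$-gerb, which is possible because the rank and degree are coprime: the central $\Gm$ acts by scalars on $E^{\rm univ}$, hence with weight one on its sub-line-bundle $P_1^{\rm univ}$, so $E^{\rm univ}\otimes(P_1^{\rm univ})^{-1}$ with its induced $\lambda$-connection and parabolic structure carries an $A$-linearization on which the center acts trivially; by fppf descent along $\Hh({\bf r})\to M^1_1({\bf r})$ it descends to a family on $M^1_1({\bf r})\times X$, and this is a Poincar\'e family since tensoring a bundle on $X$ by the one-dimensional fibre of $P_1^{\rm univ}$ changes nothing up to isomorphism. Thus $M^1_1({\bf r})$ carries a universal family and is a fine moduli space.

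For the last assertion, the morphism $\Mm ^1_1({\bf r})\to\Qq ^{\rm simple}/A$ is induced by the $A$-equivariant map $\Hh({\bf r})\to\Qq ^{\rm simple}$ and factors through the gerb $\Mm ^1_1({\bf r})\to M^1_1({\bf r})$ and a morphism $M^1_1({\bf r})\to\Qq ^{\rm simple}/A$; hence the fibre over a point $e$ is a closed substack, namely a $\Gm$-gerb over the fibre of $M^1_1({\bf r})\to\Qq ^{\rm simple}/A$ over $e$, which is a closed subvariety as the preimage of the closed point $e$. Lifting $e$ to $q\in\Qq ^{\rm simple}$, Corollary \ref{Hfib} identifies that closed subvariety with the affine line $L_q\cong\aaa ^1$, the fibre of $\Hh({\bf r})\to\Qq ^{\rm simple}$ over $q$, whose stabilizer in $A$ is the central $\Gm$ acting trivially. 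The only step with genuine content beyond this bookkeeping is the reduction of the $A$-stabilizers to the center, which is exactly the Weil-type criterion of Lemma \ref{simple}, together with the appeal to \cite{InabaIwasakiSaito,InabaIwasakiSaito2} for the coarse space; the subtlest purely formal point is the gerb-triviality argument, which is nonetheless routine once one has the coprime numerics.
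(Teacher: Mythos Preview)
Your argument is correct and follows essentially the same line as the paper's: smoothness from Corollary~\ref{Hfib}, stabilizers reduced to $\Gm$ via simplicity (Lemma~\ref{simple}) and irreducibility (Lemma~\ref{kgeneric}), and the coarse/fine moduli space from \cite{InabaIwasakiSaito}; you supply more detail than the paper on smoothness descent and on trivializing the gerb. The one place where the paper is more careful is the final assertion: you treat $e$ as a closed point of $\Qq ^{\rm simple}/A$, but at this stage that quotient has not yet been identified as a scheme (that is Lemma~\ref{ParabolicModuliStack}), so the paper instead checks directly that each $A$-orbit is closed in $\Qq ^{\rm simple}$ and pulls back to $\Hh({\bf r})$.
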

\begin{proof}
By Corollary \ref{Hfib}, $\Hh ({\bf r})$ is a fibration over 
$\Qq ^{\rm simple}$, so it is smooth. By Lemma \ref{kgeneric}, 
any point of $\Hh ({\bf r})$ represents an irreducible connection.
It follows that the automorphism group of the connection, which is also the
stabilizer in $A$ of the action, is $\Gm$. The coarse moduli space
exists, and is a fine moduli space, by GIT because for an appropriate choice of
parabolic weights all points are stable. See \cite{InabaIwasakiSaito} for example.
Since the stabilizer  group is always $\Gm$, the moduli stack is a $\Gm$-gerb
over the fine moduli space. If $e\in \Qq ^{\rm simple}/ A$, then the
$A$-orbit of $e$ is closed in $\Qq ^{\rm simple}$ as may be seen directly.
Thus, its inverse image is a closed $A$-invariant subset of $\Hh ({\bf r})$
so it corresponds to a closed substack, lying over a closed subvariety of
the fine moduli space. 
\end{proof}

A collection of weights $\alpha = (\alpha ^{\pm}_1,
\alpha ^{\pm}_2,\alpha ^{\pm}_3,\alpha ^{\pm}_4)$ for a parabolic
structure on a bundle of degree $d$ is called 
nonspecial
if $\alpha ^-_i<\alpha ^+_i<\alpha ^-_i+1$, which is analogous to
nonresonance, and if it satisfies the
Kostov-genericity condition that
for any $\sigma _1,\sigma _2,\sigma _3,\sigma _4\in \{ +,-\}$,
$$
\sum _{i=1}^4\alpha _i^{\sigma _i} + \frac{d-\sum _{i=1}^4(\alpha _i^++\alpha _i^-)}{2}
\not \in \zz .
$$

\begin{lemma}
If $\alpha$ is a nonspecial collection of weights for degree $d$,
then any $(E,\nabla , P_{\hdot})\in \Mm ^d_{\lambda}$ which is $\alpha$-semistable,
is in fact $\alpha$-stable. 
\end{lemma}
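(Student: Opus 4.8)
The plan is to argue by contradiction: suppose $(E,\nabla,P_{\hdot})$ is strictly $\alpha$-semistable but not $\alpha$-stable. Then there is a rank one subbundle $F\subset E$ preserved by $\nabla$ with $\deg^{\rm par}(F)=\frac{1}{2}\deg^{\rm par}(E,\nabla,P_{\hdot})$, i.e.\ equality in the stability inequality. Spelling this out, $F$ has some degree $\deg(F)=e\in\zz$, and for each $i$ it contributes the weight $\alpha_i^{\sigma(i,F)}$ with $\sigma(i,F)\in\{+,-\}$; the equality condition reads
$$
e - \sum_{i=1}^4 \alpha_i^{\sigma(i,F)} = \frac{d - \sum_{i=1}^4(\alpha_i^+ + \alpha_i^-)}{2}.
$$
Rearranging, this says exactly
$$
\sum_{i=1}^4 \alpha_i^{\sigma(i,F)} + \frac{d - \sum_{i=1}^4(\alpha_i^+ + \alpha_i^-)}{2} = e \in \zz,
$$
which directly contradicts the Kostov-genericity half of the nonspeciality hypothesis on $\alpha$ (with $\sigma_i := \sigma(i,F)$). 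Hence no such destabilizing $F$ of slope exactly one half can exist, so $\alpha$-semistability forces $\alpha$-stability.

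The key steps, in order, are: (1) unwind the definition of $\deg^{\rm par}(F)$ and $\deg^{\rm par}(E,\nabla,P_{\hdot})$ from the text, noting that $\sigma(i,F)$ is $-$ when $F_{t_i}\subset P_i$ and $+$ otherwise, so that the sum $\sum_i \alpha_i^{\sigma(i,F)}$ is precisely a sum of the form $\sum_i \alpha_i^{\sigma_i}$ appearing in the Kostov condition; (2) observe that equality in the semistability inequality is an integrality statement on $\sum_i\alpha_i^{\sigma_i} + \tfrac{d-\sum_i(\alpha_i^++\alpha_i^-)}{2}$, since $\deg(F)\in\zz$; (3) invoke the Kostov-genericity clause to rule this out. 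The other clause of nonspeciality for weights, $\alpha_i^-<\alpha_i^+<\alpha_i^-+1$, is not needed here — it is the analogue of nonresonance and plays a role elsewhere.

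There is essentially no obstacle: the statement is a formal bookkeeping consequence of how the parabolic degree was defined together with the Kostov condition, exactly parallel to Lemma \ref{kgeneric} where the residue version of the Kostov condition rules out equality in the slope comparison for connections. The only point requiring a moment's care is matching signs and conventions — making sure the smaller weight $\alpha_i^-$ is indeed the one attached to $P_i$ and hence to a subbundle $F$ with $F_{t_i}\subset P_i$, which is the convention fixed in the text — and checking that the arithmetic rearrangement lands precisely on the left-hand side of the displayed Kostov-genericity expression. Once that identification is made, the contradiction is immediate.
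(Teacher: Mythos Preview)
Your proof is correct and is exactly the approach the paper takes: the paper's own proof is the single sentence ``From the Kostov-genericity condition, there can be no rank $1$ subsystem with an exact equality between slopes,'' and you have simply unwound that sentence by writing out the parabolic degrees and showing that equality of slopes forces $\sum_i \alpha_i^{\sigma_i} + \tfrac{1}{2}\bigl(d-\sum_i(\alpha_i^++\alpha_i^-)\bigr)\in\zz$, contradicting Kostov-genericity. Your remark that the nonresonance clause $\alpha_i^-<\alpha_i^+<\alpha_i^-+1$ is not used here is also accurate.
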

\begin{proof}
From the Kostov-genericity condition,
there can be no rank $1$ subsystem with an exact equality between slopes. 
\end{proof}


\section{The Higgs limit construction}
\label{sec-HiggsLimit}

Choose nonspecial collections of residues ${\bf r}\in \Nn _1^d$ 
and consider the family of moduli stacks 
$$
\Mm ^{d}(\lambda {\bf r}) \rightarrow \aaa ^1.
$$
The group $\Gm$ acts over its standard action on $\aaa ^1$.

Given a point $(E,\nabla , P_{\hdot})$ in the fiber over $\lambda = 1$,
we would like to take the limit of $(E,u\nabla , P_{\hdot})$ as $u\rightarrow 0$.
The limit will be a vector bundle with $0$-connection, which is to say a Higgs bundle,
i.e. a point in the moduli stack $\Mm ^d_0$. At $\lambda =0$ the residues
go to $0$ since, in order to obtain an action of $\Gm$ we had to take the family
of residues ${\bf r}(\lambda )=\lambda {\bf r}$. Thus, the limit should be
a point in $\Mm ^d_0(0)$.

Unfortunately, the moduli stack is highly unseparated over $\lambda =0$, because
the existence of an $\Oo_X$-linear Higgs field doesn't impose as strong a 
condition as the existence of a connection. 

Therefore, there are many different ways to obtain a limit. It is instructive to consider
some of the possibilities. These basically come from considering families
of gauge transformations depending on $u$. The first and easiest way is to
take the trivial gauge transformations, which is to say we consider the $u$-connections
$u\nabla$ on the fixed quasiparabolic bundle $(E,P_{\hdot})$. As $u\rightarrow 0$
these approach the zero Higgs field $\theta = 0$, so in this case the limit is just
the quasiparabolic bundle
$(E,P_{\hdot})$ considered as a quasiparabolic Higgs bundle with $\theta = 0$.  

Another way of taking the limit is to rescale with respect to the decomposition $E=\Oo \oplus \Oo (1)$. Write the connection as a matrix
$$
\nabla = \left( \begin{array}{cc} 
\nabla _0 & \theta \\
\zeta & \nabla _1
\end{array}\right) 
$$
where $\nabla _0$ and $\nabla _1$ are logarithmic connections on $\Oo$ and $\Oo (1)$ respectively, and
$\theta : \Oo (1)\rightarrow \Oo \otimes \Omega ^1_X(\log D)$ and $\zeta :\Oo \rightarrow \Oo (1)\otimes \Omega ^1_X(\log D)$ are $\Oo_X$-linear operators. Note however that 
the residues of $\nabla$ are not compatible with the decomposition. Then we can
make a gauge transformation rescaling by $u$ on the first component
$$
g_u = \left( \begin{array}{cc} 
u & 0 \\
0 & 1
\end{array}\right) ,
$$ 
so that
$$
u\nabla \sim g_u^{-1}\circ u\nabla \circ g_u=   \left( \begin{array}{cc} 
u\nabla _0 & \theta \\
u^2\zeta & u\nabla _1
\end{array}\right) .
$$
In this case the limiting Higgs bundle is $\Oo \oplus \Oo (1)$ with
Higgs field 
$$
\nabla _0= \left( \begin{array}{cc} 
0 & \theta  \\
0 & 0
\end{array}\right) , \;\;\; 
\theta : \Oo (1)\rightarrow \Oo \otimes \Omega ^1_X(\log D).
$$ 
The quasiparabolic structure projects in the limit to one which is compatible with
the decomposition. 

Other rescalings are possible corresponding to other meromorphic decompositions of the
bundle $E$. In fact, the limiting process works even when the bundle is only filtered,
with the limiting bundle being the associated-graded.

In order to get a unique limit we should look for a separated stack or at least a
stack having a separated coarse moduli space,
and for that reason impose a semistability condition. Fix a nonspecial collection
of parabolic weights $\alpha = ( \alpha ^{\pm}_1,\ldots ,\alpha ^{\pm}_4)$ 
and consider the moduli family
$$
\Mm ^{d,\alpha}(\lambda {\bf r}) \rightarrow \aaa ^1
$$
of $\alpha$-semistable parabolic logarithmic $\lambda$-connections having the given
residues. Note that semistability and stability are equivalent since $\alpha$ is
chosen to be Kostov-generic. 

\begin{proposition}
\label{higgslimit}
For any $(E,\nabla , P_{\hdot})\in \Mm ^{d,\alpha}_1({\bf r})$,
there exists a unique limit
$$
(F,\theta , Q_{\hdot})=\lim _{u\rightarrow 0}(E,u\nabla , P_{\hdot})
$$
in the moduli stack $\Mm ^{d,\alpha}_0(0)$ of parabolic Higgs bundles with
vanishing residues. 
\end{proposition}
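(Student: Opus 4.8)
The plan is to obtain both existence and uniqueness from the properness of the coarse moduli space of $\alpha$-stable parabolic $\lambda$-connections over the $\lambda$-line, together with the $\Gm$-action. Fix $(E,\nabla,P_\hdot)\in\Mm^{d,\alpha}_1({\bf r})$ and consider the orbit morphism
$$
\varphi\colon\Gm\longrightarrow M^{d,\alpha}(\lambda{\bf r}),\qquad u\longmapsto u\cdot\bigl[(E,\nabla,P_\hdot)\bigr]=\bigl[(E,u\nabla,P_\hdot)\bigr]
$$
into the coarse moduli space. By the description of the $\Gm$-action recalled above, $\varphi$ is compatible with the standard scaling action on the $\lambda$-line, so it is a section of $M^{d,\alpha}(\lambda{\bf r})\to\aaa^1$ over the open subset $\Gm\subset\aaa^1$. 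The essential input is that $M^{d,\alpha}(\lambda{\bf r})\to\aaa^1$ is projective, in particular proper, which is the content of the GIT construction of Inaba, Iwasaki and the second author \cite{InabaIwasakiSaito,InabaIwasakiSaito2}; it is here that the hypothesis that $\alpha$ is nonspecial is indispensable, as it replaces the unbounded and unseparated moduli problem at $\lambda=0$ by a bounded, separated one.

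Granting this, the valuative criterion of properness, applied along $\operatorname{Spec}k[[u]]$, extends $\varphi$ uniquely to a morphism $\bar\varphi\colon\aaa^1\to M^{d,\alpha}(\lambda{\bf r})$ of schemes over $\aaa^1$, and I take the limit point to be $\bar\varphi(0)$, which lies in the fibre over $\lambda=0$. Since the residue map $M^{d,\alpha}\to\Nn^d$ is a morphism of schemes and $\res^\pm_i(E,u\nabla)=u\,\res^\pm_i(E,\nabla)\to 0$ as $u\to 0$, the point $\bar\varphi(0)$ has vanishing residues, so it lies in $M^{d,\alpha}_0(0)$. Uniqueness of this point is automatic, because $M^{d,\alpha}_0(0)$ is a separated quasiprojective variety, hence the extension $\bar\varphi$ is unique.

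To promote this to the statement in the moduli stack, recall that $\alpha$ nonspecial forces every $\alpha$-semistable object to be $\alpha$-stable, hence to have automorphism group exactly the scalars, so $\Mm^{d,\alpha}_0(0)$ is a $\Gm$-gerb over $M^{d,\alpha}_0(0)$, the latter being a fine moduli space in a neighbourhood of the stable point $\bar\varphi(0)$ (as in Proposition \ref{kgensmooth}). Pulling back the universal family along an \'etale neighbourhood of $0$ in $\operatorname{Spec}k[[u]]$ then realizes $\bar\varphi$ by an honest family of parabolic $\lambda$-connections which over $u\neq 0$ is isomorphic to the constant family $(E,u\nabla,P_\hdot)$ and whose fibre over $u=0$ is the desired $(F,\theta,Q_\hdot)$; any two such limits induce the same point of $M^{d,\alpha}_0(0)$ and, being stable, are canonically isomorphic. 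I expect the only real difficulty to be the bookkeeping at $\lambda=0$: one must be sure that the properness of $M^{d,\alpha}(\lambda{\bf r})\to\aaa^1$ holds over $\lambda=0$ and not merely over $\Gm$, and that realizing the limit as a genuine family does not require a ramified base change $u\mapsto u^n$ --- the latter being harmless here, but both points relying on the semistability results quoted above.
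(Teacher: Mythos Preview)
Your argument for uniqueness, via separatedness of the coarse moduli space, is correct and amounts to the paper's semicontinuity argument. The gap is in existence: the claim that $M^{d,\alpha}(\lambda\mathbf{r}) \to \aaa^1$ is projective, hence proper, is false. Neither fibre is compact. The fibre over $\lambda = 1$ is the moduli space of connections $M^1_1(\mathbf{r})$, which by Proposition~\ref{iis-picture} is obtained from a projective rational surface by \emph{removing} the support of the anticanonical divisor; it is the open part of an Okamoto--Painlev\'e pair, not itself projective. The fibre over $\lambda = 0$ is the moduli of $\alpha$-stable parabolic Higgs bundles with nilpotent residues, which carries a nontrivial Hitchin map to $\aaa^1$ (the determinant of $\theta$ lands in $H^0(\Oo_{\pp^1}(4)(-D)) \cong \cc$), and is therefore not proper either. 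The GIT constructions in \cite{InabaIwasakiSaito,InabaIwasakiSaito2} produce only quasiprojective moduli spaces, as the paper states explicitly.

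Since the map is not proper, the valuative criterion does not supply the extension $\bar\varphi$, and this is where your argument breaks. The paper proceeds constructively instead: if the underlying parabolic bundle $(E, P_\hdot)$ is already $\alpha$-stable, the limit is simply $(E, 0, P_\hdot)$; if not, one takes the unique maximally destabilizing line subbundle $L \subset E$, and a gauge rescaling along the filtration $L \subset E$ produces as limit the associated graded $L \oplus (E/L)$ with Higgs field the second-fundamental-form $\theta\colon L \to (E/L) \otimes \Omega^1_X(\log D)$ induced by $\nabla$. One then checks by hand that this Higgs bundle is $\alpha$-stable (the only $\theta$-invariant subbundle is $E/L$, which has small slope). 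This explicit construction is the substantive content your appeal to properness was meant to bypass.
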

\begin{proof}
See \cite{idsm}. However, the treatment there concerned mostly the case of 
compact base curve $X$. Furthermore, in the present case of rank $2$, the
general iterative procedure of \cite{idsm} is not necessary. So it is perhaps
worthwhile to do the existence proof here. 

If $(E,P_{\hdot})$ is already $\alpha$-stable as a parabolic vector bundle, then 
the limit is just $(F,Q_{\hdot})=(E,P_{\hdot})$ with $\theta = 0$ as in the
first example above. 

If $(E,P_{\hdot})$ is not $\alpha$-stable, hence also not $\alpha$-semistable,
there is a quasiparabolic line subbundle $(L, R_{\hdot})\subset (E,P_{\hdot})$
which is maximally destabilizing. Here $R_{i}$ is either $0$ or $L_{t_i}$,
in the second case $R_i=L_{t_i}=P_i$ is required. 
The parabolic weights are assigned accordingly:
$\alpha _{L,i} = \alpha ^+_i$ if $R_i=0$, $\alpha _{L,i} = \alpha ^-_i$ if $R_i=L_{t_i}$. This determines
the parabolic degree $\deg ^{\rm par}(L,R_{\hdot},\alpha _L)$, and
the destabilizing condition says that 
$$
\deg ^{\rm par}(L,R_{\hdot},\alpha _L) > \frac{\deg ^{\rm par}(E,P_{\hdot},\alpha )}{2}.
$$
The quotient $E/L$ similarly has a parabolic structure $R'_{\hdot}$ and 
weights $\alpha _{E/L}$, and 
$$
\deg ^{\rm par}(E/L,R'_{\hdot},\alpha _{E/L}) < \frac{\deg ^{\rm par}(E,P_{\hdot},\alpha )}{2}.
$$
The connection determines an $\Oo_X$-linear map
$$
\theta : L\rightarrow (E/L)\otimes \Omega ^1_X(\log D),
$$
nonzero because otherwise $(E,\nabla )$ would be reducible contradicting 
Lemma \ref{kgeneric} in view of the
genericity assumption for the residues ${\bf r}$.

As in the second example described above, after an appropriate
gauge rescaling, the limiting Higgs bundle is 
$$
(F,Q_{\hdot})= (L,R_{\hdot})\oplus (E/L,R'_{\hdot}),
$$
with Higgs field $\theta$. As $\theta \neq 0$ the only possible 
$\theta$-invariant subbundle
is $(E/L,R'_{\hdot})$, and this has slope strictly smaller than the slope of $F$. 
So the parabolic Higgs bundle $(F,\theta , Q_{\hdot})$ with weights determined by
$\alpha _L$ and $\alpha _{E/L}$ is stable. 

This shows existence of a limit. For unicity, proceed as in \cite{idsm}.
Given two different limits, they correspond to two different families of 
$u$-connections on $X\times \aaa ^1$ relative to $\aaa ^1$, 
isomorphic outside of $u=0$. 
Semicontinuity
of the space of morphisms between them says that there is a nonzero morphism between
the limits at $u=0$, but since both are $\alpha$-stable this must be an isomorphism. 
Thus the limit is unique. 
\end{proof}

The limiting Higgs bundle has to be fixed by the action of $\Gm$ scaling the Higgs field,
so it is a Higgs bundle corresponding to a variation of Hodge structure \cite{hbls}. The
case $\theta = 0$ corresponds to a unitary representation, whereas $L\oplus (E/L)$
with nonzero Higgs field $\theta$ corresponds to a variation of Hodge structure
with structure group $U(1,1)$ and period map taking values in the unit disc. 
We don't use this information any further here, but it is suggestive of some interesting
questions on the position of real monodromy representations in the overall picture.

The limit process leads to an equivalence relation: two points of $\Mm ^d_1({\bf r})$
are equivalent if their limits are the same. The moduli space is decomposed into
equivalence classes which are locally closed subsets, and the foliation conjecture
of \cite{idsm} states that these should be the leaves of a foliation. In the present
situation we will be able to prove that they are in fact the fibers of a morphism;
which morphism it is will depend on the parabolic weight chamber. 

The first step in this direction is to describe the possibilities
for the limiting Higgs bundle $(F,\theta , Q_{\hdot})$. The two examples of limits 
discussed
above will basically cover all of the possibilities, up to making elementary
transformations. The first task is to investigate more closely the
$\alpha$-stability condition.

Let $\mu _i:= (\alpha ^+_i + \alpha ^-_i)/2$ and $\epsilon _i:= (\alpha ^+_i - \alpha ^-_i)/2$
so
$$
\alpha ^+_i = \mu _i +\epsilon _i,\;\;\; \alpha ^-_i = \mu _i -\epsilon _i,
$$
with $0<\epsilon _i < \frac{1}{2}$. The parabolic semistability condition for 
the parabolic bundle (without connection)  
$(E, P_i)$ is described as follows. Let $\mu _{\rm tot}:= \mu _1+\mu _2+\mu _3+\mu _4$,
although 
in fact the values of $\mu_i$ and $\mu _{\rm tot}$ won't turn out to make a difference.
Let $\epsilon _{\rm tot}:= \epsilon _1+\epsilon _2+\epsilon _3+\epsilon _4$.

Assume that the points $t_i$
are ordered so that $\epsilon _1 \geq \epsilon _2\geq \epsilon _3\geq \epsilon _4$. The conclusion will need to be extended by allowing permutations at the end.

For any sub-line bundle $L\subset E$, let 
$$
\Sigma (L):= \{ i\;  |\; L_{t_i}=P_i\} .
$$
Then 
$$
\deg ^{\rm par}(L) = \deg (L) - \mu _{\rm tot} - \epsilon _{\rm tot} +2\sum _{i\in \Sigma (L)} \epsilon _i.
$$
On the other hand, the parabolic slope of $E$ is $(d-2\mu _{\rm tot})/2$ with $d=\deg (E)$.
Therefore, adding $\mu _{\rm tot}$ to both sides of the equation, 
$L$ contradicts semistability if and only if
$$
\deg (L) - \epsilon _{\rm tot} +2\sum _{i\in \Sigma (L)} \epsilon _i
> d/2.
$$
Respectively, $L$ contradicts stability if $\geq$ holds. The left side
may alternatively be written $\deg (L) +\sum _{i\in \Sigma (L)}\epsilon _i -\sum _{i\not \in \Sigma (L)}\epsilon _i$. Under the hypothesis that the weights are nonspecial,
stability and semistability are equivalent, i.e. equality can never hold. 

Specialize now to the case $E=B=\Oo \oplus \Oo (1)$. The parabolic structure
is given by a point $(u_1,\ldots , u_4)\in \Qq$ as discussed previously,
with $P_i=\langle (1,u_i)\rangle$. The semistability condition says 
$$
\deg (L) +\sum _{i\in \Sigma (L)}\epsilon _i -\sum _{i\not \in \Sigma (L)}\epsilon _i \leq 1/2.
$$
If $\deg (L)\leq -2$ then noting that $\epsilon _{\rm tot}< 2$ we always have
$$
\deg (L) +\sum _{i\in \Sigma (L)}\epsilon _i -\sum _{i\not \in \Sigma (L)}\epsilon _i
< 0 < d/2 = 1/2,
$$
so a line bundle of degree $\leq -2$ never contradicts stability. 

Consider $L=\Oo (-1)$. A map $L\rightarrow B$ is given by a pair $(v,w)$ where
$v=v_0+v_1x$ is a linear function and $w=w_0+w_1x+w_2x^2$ is a quadratic function.
Then $i\in \Sigma (L)$ if and only if $(1,u_i)$ is proportional to $(v(t_i),w(t_i))$,
in other words if 
$$
w_0+w_1t_i+w_2t_i^2 = u_i(v_0+v_1t_i).
$$
When $u_i=\infty$ replace this by $(v_0+ v_1t_i)=0$. 
This system of $4$ homogeneous equations in $5$ unknowns always has a nonzero
solution, so there is always an $\Oo (-1)=L\hookrightarrow B$ such that
$L_{t_i}=P_i$ for all $i=1,2,3,4$. This contradicts semistability if and only
if
$$
\epsilon _1+\epsilon _2+\epsilon _3+\epsilon _4 > 3/2.
$$
If this one doesn't contradict semistability then the other ones, with
less contact between $L$ and  the $P_i$, will not either.
Hence $(E,P_{\hdot})$ satisfies the semistability condition for 
line bundles of degree $-1$ if and only if 
$$
\epsilon _1+\epsilon _2+\epsilon _3+\epsilon _4 \leq 3/2 .
$$

Consider the other extreme, $L=\Oo (1)$. There is a unique morphism $L\rightarrow B$,
and $\Sigma (L)$ is the set of values of $i$ such that $u_i=\infty$. This
line subbundle contradicts semistability if and only if
$$
1/2 + \sum _{u_i=\infty} \epsilon _i > \sum _{u_i\neq \infty}\epsilon _i  .
$$
In particular, 
if $\epsilon _1+\epsilon _2+\epsilon _3+\epsilon _4 <1/2$ then
$L$ always contradicts semistability. 
On the other hand, when
$$
\epsilon _1+\epsilon _2+\epsilon _3+\epsilon _4 \geq 1/2.
$$
then there exist parabolic structures such that $L$ doesn't contradict
stability, 
including at least all of those in $\cc ^4\subset \Qq$.
Note however that some parabolic structures on the boundary  can still be unstable. 

Turn now to the subbundles of degree $0$, $L\hookrightarrow B$.
It may be assumed that $L$ is a saturated subbundle, so the inclusion
map doesn't go into $\Oo (1)\subset B$. In other words, the projection 
$B\rightarrow \Oo$ induces an isomorphism $L\stackrel{\cong}{\rightarrow}\Oo$
and we may use this isomorphism to trivialize $L$. Hence the inclusion
is given by $(1,v)$ where $v=v_0+v_1x$ is a polynomial of degree $1$.
For a parabolic structure $P_{\hdot}$ with coordinates $(u_1,u_2,u_3,u_4)$
the condition $L_{t_i}=P_i$ becomes just $v(t_i)=u_i$, i.e.
$$
v_0+v_1t_i = u_i.
$$
For any two indices $i\neq j\in \{ 1,2,3,4\}$ such that $u_i\neq \infty$ and
$u_j\neq \infty$, there is a unique solution $(v_0,v_1)$
to the pair of equations $v(t_i)=u_i$ and $v(t_j)=u_j$. 
In other words, for any pair of indices $i\neq j$ we can choose $L$ such that 
$i,j\in \Sigma (L)$. If the $u_i$ are general then $\Sigma (L)=\{ i,j\}$
has two elements. On the other hand, for some special values of $u_{\cdot}$,
the set $\Sigma (L)$ can have three or four elements. We consider those cases
later on. In the general case, the biggest degree of a subbundle is obtained
by choosing $i,j=1,2$ when the points are ordered according to decreasing values
of $\epsilon $. So, for a 
general parabolic structure $L$ will not contradict semistability, if 
$$
\epsilon _1 + \epsilon _2 -\epsilon _3 -\epsilon _4 \leq 1/2,
$$
whereas all parabolic structures will be unstable if 
$$
\epsilon _1 + \epsilon _2 -\epsilon _3 -\epsilon _4 > 1/2.
$$
Notice that to prove this last statement, 
supposing $\epsilon _1 + \epsilon _2 -\epsilon _3 -\epsilon _4 > 1/2$,
we also need to treat the cases 
where some $u_i=\infty$. If for example $u_2=\infty$, then
$$
1/2 + \epsilon _2 -\epsilon _1-\epsilon _3 -\epsilon _4 
= (1/2 + \epsilon _1 + \epsilon _2 -\epsilon _3 -\epsilon _4) - 2\epsilon_1
> \epsilon _1 + \epsilon _2 -\epsilon _3 -\epsilon _4 - 1/2 > 0,
$$
so this shows that the $\Oo (1)\subset B$ contradicts stability by the previous
discussion. The case $u_1=\infty$ is the same. 

\begin{proposition}
\label{unstablezones}
For $\alpha$ a nonspecial assignment of parabolic weights,  
define $\epsilon _i= (\alpha ^+_i-\alpha ^-_i)/2$ as above. Suppose one of
the following three conditions holds:
\newline
(a)\, $\epsilon _1+\epsilon _2+\epsilon _3+\epsilon _4 <1/2$; 
\newline
(b) \, $\epsilon _1+\epsilon _2+\epsilon _3+\epsilon _4 > 3/2$; or
\newline
(c)\, there exists a renumbering 
$\{ 1,2,3,4\} = \{ i,j,k,l\}$ such that
$$
\epsilon _i + \epsilon _j -\epsilon _k -\epsilon _l > 1/2.
$$
Then every  parabolic structure $(B,P_{\hdot})$ on the bundle $B=\Oo \oplus \Oo (1)$
is unstable. 
If, on the contrary, none of these conditions hold, then a general parabolic
structure is stable; however some special parabolic structures might still be unstable. 
\end{proposition}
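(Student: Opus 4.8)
The plan is to derive the proposition directly from the degree‑by‑degree analysis of line subbundles $L\subset B=\Oo\oplus\Oo(1)$ carried out just above. First I would dispose of the irrelevant degrees: $B$ has no line subbundle of degree $\geq 2$, since $H^0(\Oo(-m))=H^0(\Oo(1-m))=0$ for $m\geq 2$, and a line subbundle of degree $\leq -2$ never contradicts (semi)stability because then $\deg^{\mathrm{par}}(L)<0<d/2$. So the only subbundles that matter are those of degree $-1$, $0$ and $1$, and for each of these the (semi)stability inequality in terms of $\Sigma(L)$ and the $\epsilon_i$ has already been made explicit.

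For the first assertion I would split according to which hypothesis holds. If (a) holds, then for an arbitrary parabolic structure $\sum_{u_i\neq\infty}\epsilon_i\leq\epsilon_{\mathrm{tot}}<1/2\leq 1/2+\sum_{u_i=\infty}\epsilon_i$, so the subbundle $\Oo(1)\subset B$ destabilizes it, exactly as observed above. If (b) holds, then the homogeneous system of $4$ equations in $5$ unknowns always produces an embedding $\Oo(-1)=L\hookrightarrow B$ with $L_{t_i}=P_i$ for all $i$, and $-1+\epsilon_{\mathrm{tot}}>1/2$ shows that this $L$ destabilizes every parabolic structure. If (c) holds, I would relabel $t_1,t_2,t_3,t_4$ so that the chosen renumbering becomes $\epsilon_1+\epsilon_2-\epsilon_3-\epsilon_4>1/2$, and then invoke the analysis of degree $0$ subbundles above — together with the two displayed special computations there that treat $u_1=\infty$ and $u_2=\infty$ — to conclude that every parabolic structure is destabilized by a degree $0$ subbundle. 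In all three cases every parabolic structure on $B$ is unstable.

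For the second assertion I would assume all of (a), (b), (c) fail. After ordering $\epsilon_1\geq\epsilon_2\geq\epsilon_3\geq\epsilon_4$, which makes $\epsilon_1+\epsilon_2-\epsilon_3-\epsilon_4$ the largest of the six type‑(c) expressions, this gives $\epsilon_{\mathrm{tot}}\geq 1/2$, $\epsilon_{\mathrm{tot}}\leq 3/2$ and $\epsilon_1+\epsilon_2-\epsilon_3-\epsilon_4\leq 1/2$; nonspeciality of $\alpha$ forbids equality in any of these (an equality would put the parabolic degree of some line subbundle exactly at $d/2$), so the inequalities are strict. I would then call a parabolic structure $(u_1,u_2,u_3,u_4)$ general when all $u_i$ are finite and no linear polynomial $v=v_0+v_1x$ satisfies $v(t_i)=u_i$ for three distinct indices $i$; this is a dense open condition, the complement inside $\cc^4\subset\Qq$ being the union of the four hyperplanes where some triple is ``collinear''. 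For a general structure: degree $\geq 2$ subbundles do not exist; $\Oo(1)$ has $\Sigma(L)=\emptyset$, so it does not destabilize since $\epsilon_{\mathrm{tot}}>1/2$; every saturated degree $0$ subbundle has $|\Sigma(L)|\leq 2$, so its slope excess $\deg(L)+\sum_{i\in\Sigma(L)}\epsilon_i-\sum_{i\notin\Sigma(L)}\epsilon_i$ is at most $\epsilon_1+\epsilon_2-\epsilon_3-\epsilon_4<1/2$; every degree $-1$ subbundle has slope excess at most $-1+\epsilon_{\mathrm{tot}}<1/2$; and degree $\leq -2$ never destabilizes. Hence a general parabolic structure is stable. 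Finally I would point out that a special structure for which three of the $P_i$ lie in a common $\Oo\subset B$ does carry a degree $0$ subbundle with $|\Sigma(L)|=3$, which destabilizes as soon as $\epsilon_i+\epsilon_j+\epsilon_k-\epsilon_l>1/2$ for the relevant three indices; such weights are permitted in the stable zone, matching the last sentence of the statement.

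The main obstacle is not any individual computation — all of them are already in the preceding text — but the bookkeeping needed to be sure that in each regime the \emph{worst} possible line subbundle has genuinely been identified. The delicate points are the degenerate parabolic structures with some $u_i=\infty$ (which force $\Sigma(L)$ to behave differently and must be checked separately, as in the displayed cases above), the coincidences that push $|\Sigma(L)|$ up to $3$ or $4$, and the use of nonspeciality of $\alpha$ to rule out the borderline equalities so that ``general $=$ stable'' really holds on a dense open set.
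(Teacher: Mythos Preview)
Your proposal is correct and follows exactly the paper's own approach: the paper's proof is literally ``The arguments have been done above,'' and you have faithfully reconstructed and organized the degree-by-degree analysis of line subbundles $L\subset B$ (degrees $\leq -2$, $-1$, $0$, $1$) that precedes the proposition. Your treatment of the three cases (a), (b), (c), the definition of a ``general'' parabolic structure, and the use of nonspeciality to exclude the borderline equalities all match the paper's reasoning.
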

\begin{proof}
The arguments have been done above.
\end{proof}

If there is a destabilizing subbundle, then it is unique; indeed any other distinct
destabilizing
subbundle would have nonzero projection to the quotient, but this would be a morphism
of parabolic line bundles strictly decreasing the parabolic degree, which is impossible.


\section{The unstable zones}
\label{sec-unstablezone}

An unstable zone is when one of the conditions (a), (b) or (c) holds in the
previous proposition.
In fact (c) contains $6$ distinct conditions so there are really $8$ different
unstable zones. The conditions are mutually exclusive so the different zones
are disjoint. The stable zone is by definition the complement, when
the opposites of (a), (b) and (c) all hold. 

The discussion will be made easier by the fact that pairs of 
elementary transformations
permute the different zones, allowing us to consider a single condition such as (a).
The following lemma explains how the parabolic weights should be changed along
an elementary transformation. 

\begin{lemma}
\label{et}
Suppose $(\widetilde{E},\widetilde{P}_{\hdot})$ 
is a quasiparabolic bundle obtained by a single
elementary transformation $\varepsilon _i$ of $(E,P_{\hdot})$ at the point $t_i$,
see page \pageref{etpage}.  Define parabolic weights at $t_i$ by
$$
\widetilde{\alpha}^+_i := \alpha ^-_i, \;\;\; \widetilde{\alpha}^-_i:= \alpha ^+_i-1,
\;\;\; \mbox{hence  } \widetilde{\epsilon}_i = 1/2 -\epsilon _i,
$$
leaving $\widetilde{\alpha}^{\pm}_j=\alpha ^{\pm}_j$ for $j\neq i$. 
Then $\widetilde{\alpha}$
is nonspecial if and only if $\alpha$ is, and $(\widetilde{E},\widetilde{P}_{\hdot})$
is $\widetilde{\alpha}$-stable if and only if $(E,P_{\hdot})$ was $\alpha$-stable.
\end{lemma}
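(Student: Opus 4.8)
The plan is to forget the connection entirely: the statement only involves the underlying parabolic bundle, so I would compare $\alpha$-stability of $(E,P_{\hdot})$ with $\widetilde{\alpha}$-stability of $(\widetilde{E},\widetilde{P}_{\hdot})$ through a bijection between their saturated sub-line bundles. First I would set up this bijection. The natural inclusion $\widetilde{E}\hookrightarrow E$ of page \pageref{etpage} is an isomorphism over $X-\{t_i\}$, so a saturated sub-line bundle $L\subset E$ has a well-defined transform $\widetilde{L}\subset\widetilde{E}$, namely the elementary transformation of $L$ at $t_i$ taken relative to the line $P_i$. Working in a local frame $e_1,e_2$ at $t_i$ with $P_i=\langle e_1(t_i)\rangle$, one checks directly that if $\sigma(i,L)=-$ (i.e.\ $L_{t_i}=P_i$) then already $L\subset\widetilde{E}$, so $\widetilde{L}=L$, $\deg\widetilde{L}=\deg L$, and $\widetilde{L}_{t_i}\neq\widetilde{P}_i$, hence $\sigma(i,\widetilde{L})=+$; whereas if $\sigma(i,L)=+$ then $\widetilde{L}=\ker(L\to E_{t_i}/P_i)$ has $\deg\widetilde{L}=\deg L-1$ and $\widetilde{L}_{t_i}=\widetilde{P}_i$, hence $\sigma(i,\widetilde{L})=-$. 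In either case $\sigma(j,\widetilde{L})=\sigma(j,L)$ for $j\neq i$. Since $\varepsilon_i$ is invertible (page \pageref{etpage}), the inverse elementary transformation provides an inverse to $L\mapsto\widetilde{L}$, so this is a bijection between the saturated sub-line bundles of $E$ and those of $\widetilde{E}$.

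Next I would feed this into the parabolic degree formula $\deg ^{\rm par}(F)=\deg(F)-\sum_j\alpha_j^{\sigma(j,F)}$. Using $\widetilde{\alpha}^+_i=\alpha^-_i$, $\widetilde{\alpha}^-_i=\alpha^+_i-1$, and $\widetilde{\alpha}^{\pm}_j=\alpha^{\pm}_j$ for $j\neq i$, a one-line computation in each of the two cases gives
$$
\deg ^{\rm par}(\widetilde{L})=\deg ^{\rm par}(L),
$$
the degree drop in the second case being exactly absorbed by the shift $\widetilde{\alpha}^-_i=\alpha^+_i-1$. The same bookkeeping, using $\deg\widetilde{E}=\deg E-1$ and $\widetilde{\alpha}^+_i+\widetilde{\alpha}^-_i=\alpha^+_i+\alpha^-_i-1$, gives $\deg ^{\rm par}(\widetilde{E},\widetilde{P}_{\hdot})=\deg ^{\rm par}(E,P_{\hdot})$. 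Hence $\widetilde{L}$ contradicts (semi)stability of $(\widetilde{E},\widetilde{P}_{\hdot})$ for the weights $\widetilde{\alpha}$ exactly when $L$ contradicts (semi)stability of $(E,P_{\hdot})$ for the weights $\alpha$; since $L\mapsto\widetilde{L}$ is a bijection, $(\widetilde{E},\widetilde{P}_{\hdot})$ is $\widetilde{\alpha}$-stable iff $(E,P_{\hdot})$ is $\alpha$-stable.

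For the nonspeciality claim, the resonance-type condition $\widetilde{\alpha}^-_i<\widetilde{\alpha}^+_i<\widetilde{\alpha}^-_i+1$ unwinds to $0<\widetilde{\epsilon}_i<\tfrac12$, which by $\widetilde{\epsilon}_i=\tfrac12-\epsilon_i$ is equivalent to $0<\epsilon_i<\tfrac12$, the condition for $\alpha$ at $t_i$ (the other points being untouched). For Kostov-genericity of the weights, I would set $K_\alpha(\sigma):=\sum_j\alpha_j^{\sigma_j}+\tfrac12\bigl(d-\sum_j(\alpha_j^++\alpha_j^-)\bigr)$ for $\sigma\in\{+,-\}^4$, so that $\alpha$ is Kostov-generic iff $K_\alpha(\sigma)\notin\zz$ for all $\sigma$. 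Using $\deg\widetilde{E}=\deg E-1$, $\sum_j(\widetilde{\alpha}_j^++\widetilde{\alpha}_j^-)=\sum_j(\alpha_j^++\alpha_j^-)-1$, and $\widetilde{\alpha}^-_i=\alpha^+_i-1\equiv\alpha^+_i\pmod{\zz}$, one computes $K_{\widetilde{\alpha}}(\widetilde{\sigma})\equiv K_\alpha(\rho_i\widetilde{\sigma})\pmod{\zz}$, where $\rho_i$ flips the $i$-th sign; since $\rho_i$ permutes $\{+,-\}^4$, the two conditions are equivalent, and so $\widetilde{\alpha}$ is nonspecial iff $\alpha$ is.

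The one step I expect to require care is the first paragraph: pinning down the local model of $\varepsilon_i$ on a sub-line bundle so that the flip of the parabolic type $\sigma(i,\cdot)$ and the degree drop are correctly matched, and verifying that $L\mapsto\widetilde{L}$ is a genuine bijection (so that the stability statement is an honest ``iff'', not just one implication). Once that bookkeeping is fixed, both the stability equivalence and the nonspeciality equivalence reduce to routine substitution into the definitions.
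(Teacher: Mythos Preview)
Your proposal is correct and follows exactly the same route as the paper's proof: set up the bijection $L\mapsto\widetilde{L}$ between saturated sub-line bundles, check that both the parabolic degree of the rank-two bundle and the parabolic degree of each sub-line bundle are preserved, and conclude. The paper's own proof is a two-sentence sketch of precisely this argument (and says nothing explicit about the nonspeciality claim), so your write-up is simply a fleshed-out version with the local computation and the Kostov-genericity bookkeeping made explicit.
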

\begin{proof}
Whereas 
$\deg (\widetilde{E})=\deg (E)-1$, the change of weights gives back
$\deg ^{\rm par}(\widetilde{E},\widetilde{P}_{\hdot},\widetilde{\alpha})
= \deg^{\rm par}(E,P_{\hdot},\alpha )$. 
Saturated line subbundles of $\widetilde{E}$ correspond to those of $E$, and 
this correspondence also preserves parabolic degree, so the stability conditions
are equivalent.
\end{proof} 

In order to preserve an odd degree of $E$,
we can do two different elementary transformations at $t_i$ and $t_j$
(then tensor say with $\Oo (t_i)$ to get back to degree $1$).
This changes $\epsilon _i$ to $1/2-\epsilon _i$ and 
$\epsilon _j$ to $1/2-\epsilon _j$.

\begin{lemma}
\label{etpermute}
The set of three conditions ((a) or (b) or (c)) is left invariant under any such pair of elementary transformations,
and these operations permute the $8$ zones transitively. So, up to such transformations,
the unstable zones are essentially equivalent. 
\end{lemma}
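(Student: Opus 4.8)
The plan is to linearize everything by the change of variables $x_i := 2\epsilon_i - \tfrac12$, which carries the cube of nonspecial resonance data $\{0<\epsilon_i<\tfrac12\}$ onto $\{|x_i|<\tfrac12\}$ and, by the computation recorded just before the statement, carries a pair of elementary transformations $\varepsilon_i\varepsilon_j$ to the sign flip $x_i\mapsto-x_i$, $x_j\mapsto-x_j$ with the remaining two coordinates unchanged. First I would rewrite the eight inequalities in these coordinates: using $\epsilon_m=\tfrac{x_m}{2}+\tfrac14$ one finds that (b) becomes $\sum_m x_m>1$, (a) becomes $-\sum_m x_m>1$, and the six conditions (c) attached to a splitting $\{1,2,3,4\}=\{i,j\}\sqcup\{k,l\}$ become $x_i+x_j-x_k-x_l>1$. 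Thus the eight zones are exactly the half-spaces $Z_s:=\{\sum_m s_m x_m>1\}$, as $s$ ranges over the sign vectors in $\{\pm1\}^4$ with $\prod_m s_m=+1$ (the ``even'' sign vectors, of which there are eight). This rewriting also re-proves disjointness for free: for distinct even $s\neq s'$ one has $|\sum_m s_m x_m+\sum_m s'_m x_m|<2$ on $\{|x_i|<\tfrac12\}$, so the two strict inequalities cannot hold simultaneously.

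Next I would note that $\varepsilon_i\varepsilon_j$, being a sign flip at $i,j$, preserves $\{|x_m|<\tfrac12\}$, so by Lemma \ref{et} it takes nonspecial weights to nonspecial weights; and a direct substitution shows it carries $Z_s$ bijectively onto $Z_{s'}$, where $s'$ is obtained from $s$ by reversing the signs at $i$ and $j$. Flipping two signs preserves the parity $\prod_m s_m$, so $s'$ is again even. Hence each $\varepsilon_i\varepsilon_j$ permutes the eight zones $Z_s$ and, in particular, maps their union — which is precisely the locus where ``(a) or (b) or (c)'' holds — to itself. That is the invariance assertion.

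For transitivity I would identify the eight even sign vectors with the subgroup of $(\zz/2)^4$ of vanishing total parity, a copy of $(\zz/2)^3$, on which $\varepsilon_i\varepsilon_j$ acts by translation by the vector $e_{ij}$ having entry $-1$ exactly at $i$ and $j$. Since $e_{12}+e_{13}=e_{23}$, and likewise for the other pairs, and $e_{12}+e_{13}+e_{14}$ is the all-$(-1)$ vector, the three translations $e_{12},e_{13},e_{14}$ already generate this $(\zz/2)^3$; so the group generated by the six operations $\varepsilon_i\varepsilon_j$ acts simply transitively on the eight zones. Equivalently, one can display a single permutation: $\varepsilon_1\varepsilon_2$ exchanges (a)$\leftrightarrow$(c)$_{\{1,2\}}$, (b)$\leftrightarrow$(c)$_{\{3,4\}}$, (c)$_{\{1,3\}}\leftrightarrow$(c)$_{\{2,3\}}$ and (c)$_{\{1,4\}}\leftrightarrow$(c)$_{\{2,4\}}$, and combining it with $\varepsilon_1\varepsilon_3$ and $\varepsilon_1\varepsilon_4$ merges all orbits into one. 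Transitivity is exactly the statement that, up to such transformations, all unstable zones are equivalent. There is no real obstacle in this argument; the only delicate points are matching the six (c)-conditions to the six two-element subsets with the correct orientation of the $\pm$ signs, and the (trivial) check that $0<\epsilon_i<\tfrac12$ is respected so that Lemma \ref{et} applies.
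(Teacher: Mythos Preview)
Your argument is correct. The paper's own proof is simply the two words ``Direct calculation,'' so your write-up is a full elaboration of what the authors left to the reader. The substitution $x_i=2\epsilon_i-\tfrac12$ is a clean device: it turns the eight zones into the half-spaces $\{\sum s_mx_m>1\}$ indexed by the even sign vectors, and turns each pair $\varepsilon_i\varepsilon_j$ into a coordinate sign flip, after which both invariance and transitivity become the elementary observation that the even subgroup of $(\zz/2)^4$ acts simply transitively on itself by translation. This is not a different proof so much as a well-organized version of the intended direct check; the bonus is that disjointness of the zones (already noted in the paper) drops out of the same inequality $|\sum s_mx_m+\sum s'_mx_m|<2$.
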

\begin{proof}
Direct calculation.
\end{proof}

Suppose $(E,\nabla , P_{\hdot})$ is a parabolic connection
with weights $\alpha$,  in one of the unstable zones. 
Up to doing a pair of elementary transformations, we may assume then that we
are in zone (a) where the destabilizing subbundle is $\Oo (1)\subset B$. 
The limiting parabolic Higgs bundle is $L\oplus L'$ where
$L$ is given parabolic weights $\alpha ^+_i$ at $t_i$, if $u_i\neq  \infty$,
or $\alpha ^-_i$ at $t_i$ if $u_i=\infty$. The parabolic weights for $L'$ are
complementary. The Higgs field $\theta : L\rightarrow L'\otimes \Omega ^1_X(\log D)$
is the piece coming from the connection operator $\nabla$. Noting that 
$L\cong \Oo (1)$, $L'\cong \Oo $ and $\Omega ^1_X(\log D)\cong\Oo (2)$,
we see that $\theta$ may be viewed as a section of $\Oo (1)$ or a linear function.
Its zero at a point $z\in X$ is interpreted in \cite{InabaIwasakiSaito}
\cite{InabaIwasakiSaito2} \cite{Szabo} \cite{Aidan}
 as an
``apparent singularity'' of the connection, as we shall now explain.  

\begin{definition}
\label{Pdef}
Let $\Pp$ be  the non-separated scheme obtained by glueing together
two copies of $X=\pp ^1$ by the identity map over the open subset $U=\pp ^1-\{ t_1,\ldots , t_4\}$. The copies are labeled $\Pp ^+$ and $\Pp ^-$. 
\end{definition}

Interestingly enough, this scheme also plays the same role for the stable zone.
It appeared in Arinkin's work on the geometric Langlands correspondence \cite{Arinkin}. 

In \cite{InabaIwasakiSaito2}, Inaba, Iwasaki and the
second author define a morphism
$$
\Upsilon:\Mm ^1_1({\bf r})\rightarrow \Pp 
$$
as follows. Any $(E,\nabla , P_{\hdot})$ in $\Mm ^1_1({\bf r})$
has a unique subbundle $L\subset E$ of degree $1$. The quotient $E/L$ has degree $0$. 
The connection induces an operator 
$\varphi : L\rightarrow (E/L)\otimes \Omega ^1_X(\log D)$. It is an $\Oo _X$-linear
map of line bundles. Comparing degrees of the
source and the target, we see that $\varphi$ has exactly one zero. The position 
of the zero defines a point in $\pp ^1$. If located at one of the singular points $t_i$
then we can further ask whether $L_{t_i}\subset P_{i}\subset E_{t_i}$, if so 
then the point goes into $\Pp ^-$, if not it goes into $\Pp ^+$. 

If the zero of $\varphi$ is not located at $t_i$, then the
condition that $\res (\nabla , t_i)$ respect the quasiparabolic $P_i$ implies that
$P_i \neq L_{t_i}$.  

\begin{proposition}
\label{iis-picture}
This pointwise prescription defines a morphism $\Upsilon$,
all fibers of which are trivial $\Gm$-gerbs over $\aaa^1$. The structure of the
moduli space $M^1_1({\bf r})$
is a ruled surface, blown up at two distinct points on each fiber $F_i$ 
over $t_i \in \pp^1$ of Hirzebruch surface $\Sigma_2 \rightarrow \pp^1$ 
with subsequently the strict transform of the section at infinity and the fibers $F_i$, $1 \leq i \leq 4$ removed. The affine fibers of $\Upsilon$ over points of $U$ are
the fibers of the ruled surface, over the doubled-up points they are the two exceptional divisors.
\end{proposition}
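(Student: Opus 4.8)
The plan is to proceed in three stages: promote the pointwise recipe to an honest morphism, exhibit the explicit birational model of $M^1_1({\bf r})$ over the line of apparent singularities, and then read off the fibres and the gerbe structure.

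First I would check the \emph{morphism property} relatively. Over a test scheme $S$ carrying a family $(E,\nabla ,P_{\hdot})$, Lemma \ref{deg1} applied fibrewise shows that $E$ has a unique relative line subbundle $L$ of fibrewise degree $1$ (fppf-locally it is $\mathrm{pr}_X^{\ast}\Oo (1)\subset B$), varying algebraically; the connection gives an $\Oo _{X\times S}$-linear map $\varphi\colon L\to (E/L)\otimes \Omega ^1_{X\times S/S}(\log D)$ which is fibrewise nonzero by Lemma \ref{kgeneric}, since a zero fibre would make $(E_s,\nabla _s)$ reducible. Hence the vanishing scheme of $\varphi$ is finite flat of degree one over $S$, i.e. the graph of a morphism $q\colon S\to X$, which over $q^{-1}(U)$ already gives the map to $U\subset \Pp$. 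For the $\pm$ decoration at the $t_i$, the observation preceding the statement gives $\{L_{t_i}=P_i\}\subseteq q^{-1}(t_i)$, and a short set-theoretic check then shows $S=S^+\cup S^-$, where $S^+:=S\setminus\bigcup _i\{L_{t_i}=P_i\}$ and $S^-:=S\setminus\bigcup _i\bigl(q^{-1}(t_i)\setminus\{L_{t_i}=P_i\}\bigr)$; on these the recipe lands in $\Pp ^+$ and $\Pp ^-$ respectively and agrees on the overlap. Now $S^+$ is manifestly open, while the openness of $S^-$ --- equivalently that $\{L_{t_i}\ne P_i\}$ is clopen in $q^{-1}(t_i)$, i.e. that $q^{-1}(t_i)$ is a disjoint union of two closed subschemes --- is part of the structural description below, so I would establish that and then return here to conclude that $\Upsilon$ is a morphism.

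Second, I would produce the \emph{explicit model}. Fixing coordinates with $t_i\ne\infty$ and trivializing $B$ as in Section \ref{sec-ParabolicStructures}, writing $\nabla$ in the matrix form of Section \ref{sec-HiggsLimit} identifies the off-diagonal entry $\theta\colon\Oo(1)\to\Oo\otimes\Omega^1_X(\log D)$ with the operator $\varphi$, so its unique zero is $q$. Since by Corollary \ref{Hfib} the connections on a fixed simple $(B,P_{\hdot})$ form an affine line, I would coordinatize $M^1_1({\bf r})$ birationally, following \cite{InabaIwasakiSaito,InabaIwasakiSaito2}, by $q$ together with a companion accessory parameter $p$, presenting it as a surface fibred over the $q$-line $\pp^1$. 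Over $U$ this is a smooth $\aaa^1$-fibration whose relative compactification is a $\pp^1$-bundle; the numerical point is that $\Omega^1_X(\log D)\cong\Oo(2)$ pins the twist to $2$, so the relevant ruled surface is the Hirzebruch surface $\Sigma_2\to\pp^1$ (the section at infinity having self-intersection $-2$). Over each $t_i$ one performs the local analysis of \cite{InabaIwasakiSaito2}: the apparent singularity can degenerate to $t_i$ in two transverse ways according to whether $L_{t_i}$ lies in $P_i$ or not, and resolving the ensuing indeterminacy of $p$ amounts to blowing up two distinct points of the fibre $F_i$; after these eight blow-ups the fibres $F_i$ and the strict transform of the section at infinity are deleted, as the corresponding limiting objects are not in $\Mm^1_1({\bf r})$. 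This is exactly the stated description, and it shows in particular that $q^{-1}(t_i)$ is the disjoint union of the two exceptional curves, with the decoration $\pm$ constant on each --- which completes the morphism argument.

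Finally, for the \emph{fibres}: over $q\in U$ the fibre of $\Upsilon$ is a ruling of $\Sigma_2$ minus its one point on the removed section at infinity, hence $\aaa^1$; over the doubled point $t_i^{\pm}$ it is the corresponding exceptional line minus its point on the removed strict transform of $F_i$, again $\aaa^1$. By Proposition \ref{kgensmooth} each fibre of $\Upsilon$ is a $\Gm$-gerbe over its affine line, and the gerbe is trivial because $H^2_{\mathrm{et}}(\aaa^1_{\cc},\Gm)=\mathrm{Br}(\aaa^1_{\cc})=0$ (alternatively, one can split it using a universal family over the fibre). I expect the \emph{main obstacle} to be the second stage: proving that the ruled surface is $\Sigma_2$ rather than $\Sigma_0$ --- equivalently that the section at infinity has self-intersection exactly $-2$ --- and that the two blow-up centres on each $F_i$ are genuinely distinct, which is precisely what forces the target to be the non-separated scheme $\Pp$. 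This is carried out by the accessory-parameter computation of \cite{InabaIwasakiSaito,InabaIwasakiSaito2}.
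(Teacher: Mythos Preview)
Your proposal is correct and substantially more detailed than the paper's own proof, which consists entirely of the sentence ``See Theorem 4.1 of \cite{InabaIwasakiSaito2}. This picture will be described in further detail in Section \ref{sec-okamoto}.'' What you have written is essentially a roadmap of that cited argument, together with the elaboration given later in Section \ref{sec-okamoto} (the identification $\Sigma_2=\pp(\Oo(2)\oplus\Oo)$, the eight blow-up centres $b_i^{\pm}$, and the removal of $Y_{\rm red}=C_0+F'_1+\cdots+F'_4$), so the approaches coincide.

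Two minor remarks. First, your handling of the morphism property has an acknowledged forward reference: you postpone the openness of $S^-$ to the structural description and then circle back. This is logically fine and is implicitly how the cited reference proceeds (build the surface first, then read off that the two exceptional curves over $t_i$ are disjoint and carry the $\pm$ decoration), but it is worth flagging that the paper itself does not spell out this step at all. Second, your justification that the $\Gm$-gerbe over each $\aaa^1$-fibre is trivial via $\mathrm{Br}(\aaa^1_{\cc})=0$ is a clean addition; the paper asserts triviality in the statement but neither here nor in Section \ref{sec-okamoto} gives an argument for it.
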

\begin{proof}
See Theorem 4.1 of \cite{InabaIwasakiSaito2}. This picture will be described
in further detail in Section \ref{sec-okamoto}. 
\end{proof}

In order to relate this map with the limit map, we investigate what
stable Higgs bundles look like. 

\begin{lemma}
\label{thetaneqzero}
If $(E,\theta )$ is an $\alpha$-stable Higgs bundle in $\Mm ^1_0(0)$
with $\theta = 0$
then $E\cong B$. If $\alpha$ is in the unstable zone then this can't happen, so in the unstable zone we have $\theta \neq 0$ for any
$\alpha$-stable Higgs bundle. 
\end{lemma}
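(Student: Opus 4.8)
The plan is to reduce the statement to parabolic stability of the underlying bundle, classify which degree-one bundles can carry an $\alpha$-stable structure, and then read off the second assertion from Proposition \ref{unstablezones}.

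First, since $\theta =0$ every sub-line bundle is automatically $\theta$-invariant, so $(E,\theta ,Q_{\hdot})$ is an $\alpha$-stable Higgs bundle exactly when the quasiparabolic bundle $(E,Q_{\hdot})$ is $\alpha$-stable. I would then rerun the slope bookkeeping of Section \ref{sec-HiggsLimit}, this time without assuming $E\cong B$. Write $E\cong \Oo (m)\oplus \Oo (1-m)$ with $m\ge 1$. Since $m\ge 1$ one has ${\rm Hom}(\Oo (m),E)=H^0(\Oo )\cong \cc$, so $E$ has a unique saturated sub-line bundle $L_0\cong \Oo (m)$ of top degree, and $\deg ^{\rm par}(L_0)\ge m-\sum _i\alpha _i^+ = m-\mu _{\rm tot}-\epsilon _{\rm tot}$. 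Comparing with $\tfrac12 \deg ^{\rm par}(E)=\tfrac12 -\mu _{\rm tot}$ and using $\epsilon _{\rm tot}<2$, $\alpha$-stability forces $m-2<m-\epsilon _{\rm tot}<\tfrac12$, hence $m\le 2$.

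The remaining point is to exclude $E\cong \Oo (2)\oplus \Oo (-1)$. Here $\deg ^{\rm par}(L_0)=\deg ^{\rm par}(\Oo (2))\ge 2-\mu _{\rm tot}-\epsilon _{\rm tot}$, which already contradicts stability unless $\epsilon _{\rm tot}>3/2$, i.e. unless $\alpha$ lies in zone (b). In that case I would instead produce a destabilizing sub-line bundle of degree $-1$: the space ${\rm Hom}(\Oo (-1),E)=H^0(\Oo (3))\oplus H^0(\Oo )$ has dimension $5$, and for each $i$ the requirement that the image line at $t_i$ equal $P_i$ is one linear condition, so there is a nonzero $\varphi :\Oo (-1)\to E$ meeting as many of the $P_i$ as the incidence configuration permits. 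When $\varphi$ has nonzero component along the $\Oo (-1)$-summand its saturation has degree $-1$ and realizes a saturated $L\cong \Oo (-1)$ with $\Sigma (L)$ as large as possible, of parabolic degree $-1-\mu _{\rm tot}-\epsilon _{\rm tot}+2\sum _{i\in \Sigma (L)}\epsilon _i$; in the degenerate configurations in which every such $\varphi$ is forced into the $\Oo (2)$-summand, the same count shows that correspondingly many $P_i$ lie on $\Oo (2)_{t_i}$, so that $\Sigma (\Oo (2))$ is large instead. A direct comparison of these parabolic degrees with $\tfrac12 \deg ^{\rm par}(E)$ shows that one of the two subbundles always violates stability; the borderline equalities that would spoil this are precisely the values excluded by Kostov-genericity of the weights, so in fact strict violation holds and $(E,Q_{\hdot})$ is not $\alpha$-stable. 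Thus $m=1$ and $E\cong B$.

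Finally, suppose $\alpha$ lies in an unstable zone and $(E,\theta ,Q_{\hdot})$ is $\alpha$-stable with $\theta =0$. By the above $E\cong B$, but Proposition \ref{unstablezones} says every quasiparabolic structure on $B$ is $\alpha$-unstable, a contradiction; hence no $\alpha$-stable Higgs bundle in $\Mm ^1_0(0)$ has vanishing Higgs field. The one genuinely laborious step is the $\Oo (2)\oplus \Oo (-1)$ analysis in zone (b): one has to verify, for each configuration of the four parabolic points relative to the two line-bundle summands, that there is always an honestly saturated destabilizing sub-line bundle (of degree $-1$, or of degree $2$ in the degenerate cases), and it is exactly there that nonspeciality of the weights enters. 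Everything else is a one-line slope estimate.
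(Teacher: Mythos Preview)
Your argument is correct and follows essentially the same route as the paper: reduce to parabolic stability, use the top-degree summand $\Oo(m)$ to force $m\le 2$, and in the $m=2$ case play the $\Oo(2)$ subbundle against a degree $-1$ subbundle through all four $P_i$ to reach a contradiction. You are in fact more careful than the paper on one point: the paper simply asserts that a saturated $\Oo(-1)\subset E$ through all $P_i$ exists, whereas you correctly note that this can fail when some $P_i$ lies on $\Oo(2)_{t_i}$, and that in precisely those degenerate configurations $\Oo(2)$ itself already destabilizes (indeed a single $P_i$ on $\Oo(2)$ is enough, since then the stability inequality for $\Oo(2)$ would force $-\epsilon_i+\sum_{j\ne i}\epsilon_j>3/2$, impossible for $\epsilon_j<1/2$).
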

\begin{proof}
If $\theta = 0$ then the stability condition is supposed to hold for any subbundle. 
If $E$ is not of the form $B=\Oo \oplus  \Oo (1)$ then $E$ has a subbundle of
degree $2$. For this subbundle, assuming the worst-case
scenario $L_{t_i}\not\subset P_i$ for any $i$,
the stability condition as discussed above becomes
$$
2 -\epsilon _1 -\epsilon _2 -\epsilon _3 - \epsilon _4 < \frac{1}{2} .
$$
Suppose this holds. It means that $\epsilon _1+\epsilon _2 + \epsilon _3 + \epsilon _4> 3/2$. However, then there is a subbundle of the form $\Oo (-1)=L'\subset E$
such that $L'_{t_i}=P_i$ for all $i$. For this subbundle,
$$
-1 + \epsilon _1+\epsilon _2 + \epsilon _3 + \epsilon _4 > 1/2
$$
contradicting stability. This contradiction shows that $E\cong B$. 
Furthermore $(E,P_{\hdot})$ is an $\alpha$-stable parabolic bundle, so
Proposition \ref{unstablezones} 
shows that $\alpha$ has to be in the stable zone.

If $\alpha$ is in the unstable zone (that is to say, if
one of the inequalities of Proposition \ref{unstablezones}
holds), then the above argument shows that no stable 
Higgs bundle with $\theta = 0$ can exist, showing that $\theta \neq 0$.
\end{proof}

Suppose $\alpha$ is in the unstable zone and $(E,\theta ,P_{\hdot})$ is an $\alpha$-stable 
parabolic Higgs bundle in the fixed point set $\Mm ^1_0(0)^{\Gm}$. By the lemma, 
$\theta \neq 0$. This means that $E$ must be a nontrivial
system of Hodge bundles \cite{hbls}, which in
the rank two case means it is a direct sum of two line bundles 
$$
E=E^0\oplus E^1, \;\;\; \theta : E^0\rightarrow E^1\otimes \Omega ^1_X(\log D)
$$
with $\theta \neq 0$. It follows that $\deg (E^0)\leq \deg (E^1)+2$. 
The quasiparabolic structure is compatible with the $\Gm$-action, so 
either $P_i\subset E^0$ or $P_i\subset E^1$. The only subbundle preserved by $\theta$
is $E^1$. Let $\Sigma (E^1)$ denote the set of indices $i\in \{ 1,2,3,4\}$ such that
$P_i= E^1_{t_i}$. Then the $\alpha$-stability condition says that 
\begin{equation}
\label{shbstab}
\deg (E^1) -\sum _{i=1}^4\epsilon _i + \sum _{i\in \Sigma (E^1)}2\epsilon _i < 1/2.
\end{equation}

\begin{theorem}
\label{unstabletheorem}
Suppose ${\bf r}\in \Nn ^1_1$ and $\alpha$ is an assignment of parabolic weights,
both nonspecial. Suppose that $\alpha$ is in the (a)-unstable zone, i.e.
condition (a) of Proposition \ref{unstablezones} holds. There is a
set-theoretical isomorphism,
constructibly algebraic but not a morphism of stacks, from the points of $\Pp$ to the
fixed point set of $\Gm$ acting on the moduli space of $\alpha$-stable strictly
parabolic Higgs bundles
$$
{\bf V}_{\alpha}:\Pp \stackrel{\cong}{\rightarrow}(\Mm ^{1,\alpha}_0(0))^{\Gm},
$$
such that 
for any $(E,\nabla ,P_{\hdot})\in \Mm ^{d,\alpha}_1({\bf r})$ we have
$$
\lim _{u\rightarrow 0} (E,u\nabla ,P_{\hdot}) = {\bf V}_{\alpha}(\Upsilon (E,\nabla ,P_{\hdot})).
$$
Here the limit is taken in the $\alpha$-stable Hodge moduli stack 
$\Mm ^{d,\alpha}(\lambda {\bf r})\rightarrow \aaa ^1$. 
\end{theorem}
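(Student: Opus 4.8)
The plan is to make the fixed‑point set $(\Mm^{1,\alpha}_0(0))^{\Gm}$ completely explicit, to build ${\bf V}_{\alpha}$ by hand from that description, and then to recognize that the Higgs limit and $\Upsilon$ produce the same data, using the gauge rescaling in the proof of Proposition \ref{higgslimit}. Throughout we take $d=1$; the general odd case reduces to this as in the introduction. \emph{The $\Gm$-fixed locus.} A point of $(\Mm^{1,\alpha}_0(0))^{\Gm}$ is a $\Gm$-fixed $\alpha$-stable parabolic Higgs bundle, hence a system of Hodge bundles $F=E^0\oplus E^1$ with $\theta\colon E^0\to E^1\otimes\Omega^1_X(\log D)$ and $Q_i$ compatible with the grading. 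By Lemma \ref{thetaneqzero}, in the unstable zone $\theta\neq 0$, so $\deg E^0\le \deg E^1+2$; combined with $\deg F=1$ this forces $\deg E^1\ge 0$, and then \eqref{shbstab} together with condition (a), $\epsilon_{\rm tot}<1/2$, excludes $\deg E^1\ge 1$. Thus $E^0\cong\Oo(1)$, $E^1\cong\Oo$, and $\theta$ is a nonzero section of $\mathrm{Hom}(\Oo(1),\Oo(2))\cong\Oo(1)$, i.e. a linear form with a unique zero $z\in X$. The residue $\res(\theta,t_i)$ is nilpotent, lower triangular for the grading, and vanishes precisely when $z=t_i$; it preserves $Q_i=E^1_{t_i}$ always, while it preserves $Q_i=E^0_{t_i}$ only when $z=t_i$. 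Conversely all of these choices satisfy \eqref{shbstab} because $\epsilon_{\rm tot}<1/2$. Hence, as a set, $(\Mm^{1,\alpha}_0(0))^{\Gm}$ consists of a point $z\in U$ with all $Q_i=E^1_{t_i}$, or a point $z=t_i$ together with a choice $Q_i\in\{E^0_{t_i},E^1_{t_i}\}$; as a scheme it is $\pp^1$ (the $\theta$-line, with all $Q_i=E^1_{t_i}$) disjoint from four reduced points (those with some $Q_i=E^0_{t_i}$).

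\emph{Construction of ${\bf V}_{\alpha}$.} Define ${\bf V}_{\alpha}$ on $\Pp$ by sending $p\in U$ to the class of $(\Oo(1)\oplus\Oo,\,\theta_p,\,Q_\bullet)$ with $\theta_p$ the linear form vanishing at $p$ and all $Q_i=\Oo_{t_i}$; sending $t_i^-\in\Pp^-$ to the class with $z=t_i$ and $Q_i=\Oo(1)_{t_i}=E^0_{t_i}$; and sending $t_i^+\in\Pp^+$ to the class with $z=t_i$ and $Q_i=\Oo_{t_i}=E^1_{t_i}$. By the previous paragraph this is a bijection, and it is algebraic in restriction to $\Pp^+$ and to $\Pp^-$, hence constructibly algebraic. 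It is not a morphism of schemes, a fortiori not of stacks: $\Pp$ is connected, whereas its image is the disconnected scheme $\pp^1\sqcup\{\text{four points}\}$; concretely the two preimages $t_i^\pm$ of each non‑separated point of $\Pp$ go one into the $\pp^1$‑component and one into an isolated reduced point.

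\emph{Matching with $\Upsilon$.} Let $(E,\nabla,P_\bullet)\in\Mm^{1,\alpha}_1({\bf r})$. By Lemma \ref{deg1} $E\cong B=\Oo\oplus\Oo(1)$; in the (a)‑zone the destabilizing line subbundle of $(E,P_\bullet)$ is $L=\Oo(1)$, which is at the same time the unique degree‑$1$ subbundle entering the definition of $\Upsilon$, and $\nabla$ induces on it one and the same $\Oo_X$‑linear map $\varphi=\theta\colon L\to(E/L)\otimes\Omega^1_X(\log D)$, a linear form with a single zero $z$. By construction $\Upsilon(E,\nabla,P_\bullet)$ is the point of $\Pp$ carried by that zero $z$, placed in $\Pp^-$ when $z=t_i$ and $P_i=L_{t_i}$ and in $\Pp^+$ otherwise (recall $P_i=L_{t_i}$ forces $z=t_i$). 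On the other hand, by the gauge rescaling in the second example of the proof of Proposition \ref{higgslimit}, applied with the destabilizing $L=\Oo(1)$, the limit $\lim_{u\to0}(E,u\nabla,P_\bullet)$ is the graded bundle $L\oplus(E/L)\cong\Oo(1)\oplus\Oo$ with Higgs field $\theta$, with parabolic line at $t_i$ equal to $(E/L)_{t_i}=E^1_{t_i}$ when $P_i\neq L_{t_i}$ (the case $u_i\neq\infty$, where $L$ receives the weight $\alpha^+_i$) and equal to $L_{t_i}=E^0_{t_i}$ when $P_i=L_{t_i}$ (the case $u_i=\infty$). Comparing with the first two paragraphs, the underlying Hodge bundle, the Higgs field, the zero $z$, the parabolic line at every $t_i$, and the $\Pp^\pm$‑label all agree; this gives $\lim_{u\to0}(E,u\nabla,P_\bullet)={\bf V}_{\alpha}(\Upsilon(E,\nabla,P_\bullet))$ pointwise, and constructibility of the identity follows since $\Upsilon$ is a morphism and both sides are constructible in the parameter.

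\emph{Main obstacle.} The substantive point is the coincidence, special to the (a)‑zone, that the subbundle destabilizing $\alpha$‑stability and the degree‑$1$ subbundle governing the apparent singularity are literally the same $\Oo(1)\subset B$, so that $\theta$ and $\varphi$ coincide; once this is in hand the rest is bookkeeping. The one delicate matter is the behaviour over the four values $z=t_i$: one must verify that the two Higgs bundles attached there are genuinely non‑isomorphic, so that ${\bf V}_{\alpha}$ is injective and faithfully reflects the non‑separatedness of $\Pp$ — for which one uses that the cokernel of $\theta$ (a skyscraper at $z$) together with which of $E^0_{t_i},E^1_{t_i}$ carries the weight $\alpha^-_i$ is an intrinsic invariant — and that the rescaling limit lands in the claimed component of the fixed locus rather than jumping, which is exactly the content of the last matching in the previous paragraph.
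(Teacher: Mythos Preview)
Your proof is correct and follows essentially the same approach as the paper's: identify the $\Gm$-fixed locus via Lemma \ref{thetaneqzero} and \eqref{shbstab} as systems of Hodge bundles $\Oo(1)\to\Oo\otimes\Omega^1_X(\log D)$ with $\theta$ vanishing once, then match this with $\Upsilon$ by observing that in the (a)-zone the destabilizing subbundle and the degree-$1$ subbundle defining the apparent singularity coincide. You have added useful explicit detail the paper omits, namely the scheme-theoretic description of the fixed locus as $\pp^1\sqcup\{\text{four points}\}$ and the argument that ${\bf V}_\alpha$ cannot be a morphism because it carries a connected source to a disconnected target.
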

\begin{proof}
In the (a)-unstable zone, $\epsilon _1+\epsilon _2+\epsilon _3+\epsilon _4 <1/2$
implies that 
$$
\left| -\sum _{i=1}^4\epsilon _i + \sum _{i\in \Sigma (E^1)}2\epsilon _i \right| < 1/2,
$$
so if $\deg (E^1)\geq 1$ then the $\alpha$-stability condition \eqref{shbstab}
never holds, while if $\deg (E^1)\leq 0$ then it always holds. Given that $\deg (E^0)+\deg (E^1)=1$ and $\deg (E^0)\leq \deg (E^1)+2$, the only possibility is $\deg (E^0)=1$ and
$\deg (E^1)=0$. In other words, in this case an $\alpha$-stable system of Hodge
bundles must be of the form
$$
\Oo (1)\stackrel{\theta}{\rightarrow} \Oo \otimes \Omega ^1_X(\log D).
$$
Thus $\theta$ is a section of a line bundle of degree $1$, so it has exactly one zero. 

The condition that $\theta$ be strictly compatible with the parabolic structure means
that if $\theta (t_i)\neq 0$ then $P_i = E^1_{t_i}$. However, if $\theta (t_i)=0$ then
$P_i$ can be either $E^1_{t_i}$ or $E^0_{t_i}$. We see that, set theoretically,
the set of possible choices for $(E,\theta  , P_{\hdot})$ is in bijective correspondence
with the points of $\Pp$. This correspondence is the map ${\bf V}_{\alpha}$. 

Given $(E,\nabla , P_{\hdot})\in \Mm ^{1,\alpha}_0({\bf r})$,
the limit $\lim _{u\rightarrow 0}(E,u\nabla , P_{\hdot})$ is obtained by taking $E^0$ to be the $\alpha$-destabilizing subbundle, $E^1=E/E^0$ and using the map which was previously
denoted $\varphi$ as the Higgs field \cite{idsm}. In view of the definition of 
$\Upsilon$ described above Proposition \ref{iis-picture}, this gives exactly the
required compatibility. 
\end{proof}

\begin{corollary}
\label{foliationunstable}
The foliation conjecture of \cite{idsm} holds for rank two parabolic connections
on $\pp ^1-\{t_1,t_2,t_3,t_4\}$ when the residues and parabolic weights are
nonspecial, and the parabolic weights are in one of the unstable zones.  
\end{corollary}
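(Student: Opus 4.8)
The plan is to deduce the foliation conjecture directly from Theorem \ref{unstabletheorem} together with Proposition \ref{iis-picture}. Recall that the Higgs limit decomposition of $M^1_1({\bf r})$ is, by definition, the partition into equivalence classes where two points are equivalent if $\lim_{u\to 0}(E,u\nabla,P_\hdot)$ agrees; the foliation conjecture asserts that when the moduli space is smooth, this decomposition is a foliation, i.e. the leaves are the (connected components of) fibers of a smooth submersion, or at least locally so.

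First I would observe that by Lemma \ref{etpermute} (pairs of elementary transformations permute the eight unstable zones transitively and preserve nonspeciality, smoothness, and the Higgs limit construction since $\varepsilon_i$ is an isomorphism of moduli stacks commuting with the $\Gm$-action used to take limits), it suffices to treat the (a)-unstable zone. Second, by Proposition \ref{kgensmooth} the coarse moduli space $M^1_1({\bf r})$ is a smooth separated quasiprojective variety, so the hypothesis of the foliation conjecture is met. Third, Theorem \ref{unstabletheorem} gives the identity $\lim_{u\to 0}(E,u\nabla,P_\hdot) = {\bf V}_\alpha(\Upsilon(E,\nabla,P_\hdot))$, and since ${\bf V}_\alpha$ is injective (it is a set-theoretic bijection $\Pp \xrightarrow{\cong} (\Mm^{1,\alpha}_0(0))^{\Gm}$), two points of $M^1_1({\bf r})$ have the same Higgs limit if and only if they have the same image under $\Upsilon$. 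Therefore the Higgs limit decomposition coincides exactly with the decomposition of $M^1_1({\bf r})$ into fibers of the morphism $\Upsilon$.

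Finally I would invoke Proposition \ref{iis-picture}: $\Upsilon : M^1_1({\bf r}) \to \Pp$ is a morphism all of whose fibers are $\aaa^1$ (over points of $U$ these are the ruling fibers of the Hirzebruch surface, and over the doubled points $t_i$ they are the exceptional divisors of the blow-ups), so $\Upsilon$ is a smooth morphism with one-dimensional, irreducible, hence connected fibers onto the smooth (non-separated but locally of finite type, which is all that is needed) target $\Pp$. A surjective smooth morphism with connected fibers gives a regular foliation whose leaves are precisely those fibers. Hence the Higgs limit decomposition is a foliation, which is the content of the foliation conjecture in this case.

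I do not expect a genuine obstacle here, as all the work has been done: the only points to check carefully are (i) that "foliation conjecture of \cite{idsm}" really is the statement that the Higgs limit decomposition is a foliation on the smooth locus — so one should quote the precise formulation from \cite{idsm} — and (ii) that $\Upsilon$ being a morphism with the fiber structure of Proposition \ref{iis-picture} legitimately yields a \emph{foliation} even though the base $\Pp$ is non-separated; this is fine because the foliation is a local object and locally $\Pp$ looks like $\aaa^1$, so $\Upsilon$ is locally a smooth fibration over a smooth base. The reduction via Lemma \ref{etpermute} should also be stated with a one-line justification that elementary transformations are compatible with the whole limit construction, since $\Gm$ and $\varepsilon_i$ commute. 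Modulo these bookkeeping remarks, the corollary is immediate from the preceding results.
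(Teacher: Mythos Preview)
Your proof is correct and follows essentially the same approach as the paper: reduce to the (a)-unstable zone via pairs of elementary transformations (Lemma \ref{etpermute}), use Theorem \ref{unstabletheorem} to identify the Higgs limit decomposition with the fibers of $\Upsilon$, and then invoke Proposition \ref{iis-picture} to conclude that these fibers form a smooth foliation. Your additional care about the injectivity of ${\bf V}_\alpha$, the smoothness of the moduli space, and the non-separatedness of $\Pp$ are reasonable elaborations but not essential differences in strategy.
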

\begin{proof}
By doing elementary transformations we can reduce to supposing that $\alpha$ is in
the (a)-unstable zone. 
The decomposition into subspaces according to the position of $\lim _{u\rightarrow 0}u()$
is equal to the decomposition into fibers of the map $\Upsilon$, by the preceding theorem.
By Proposition \ref{iis-picture} which recopies 
\cite[Theorem 4.1]{InabaIwasakiSaito2}, this decomposition 
is the decomposition into fibers of a smooth morphism, in particular it is a foliation. 
\end{proof}


\section{The stable zone}
\label{sec-stablezone}

The stable zone will mean when none of (a), (b) or (c) hold, which is to say,
with the nonspeciality hypothesis in effect, that
\begin{equation}
\label{staba}
\epsilon _1+\epsilon _2+\epsilon _3+\epsilon _4 > 1/2;
\end{equation}
\begin{equation}
\label{stabb}
\epsilon _1+\epsilon _2+\epsilon _3+\epsilon _4 < 3/2; 
\end{equation}
and for all renumberings 
$\{ 1,2,3,4\} = \{ i,j,k,l\}$ we have
\begin{equation}
\label{stabc}
\epsilon _i + \epsilon _j -\epsilon _k -\epsilon _l < 1/2.
\end{equation}
Again this is invariant under elementary transformations. If $\alpha$ is an
assignment of parabolic weights in the stable zone, then a general parabolic 
structure on $B$ will be stable, however special ones might not be stable. 

The open subset $\Qq ^{\rm simple}\subset \Qq$ of simple 
quasi-parabolic bundles is preserved by the action of the automorphism
group $A$, and 
$$
\Qq ^{\rm simple}\stackquot A
$$
is the moduli stack of simple quasi-parabolic bundles. Recall from Lemma \ref{simple}
and Corollary \ref{Hfib},
the image of $\Hh \rightarrow \Qq$ is $\Qq ^{\rm simple}$. 

\begin{lemma}\label{ParabolicModuliStack}
The moduli stack $\Qq ^{\rm simple}\stackquot A$ is a $\Gm$-gerb over the non-separated scheme $\Pp$ of Definition \ref{Pdef}. This gerb, which is in fact trivial, 
is the same as Arinkin's stack \cite{Arinkin}. 
\end{lemma}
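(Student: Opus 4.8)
The plan is to produce the claimed $\Gm$-gerb structure on $\Qq^{\rm simple}\stackquot A$ by exhibiting an explicit map to $\Pp$, showing its fibers are $B\Gm$, and then constructing a section to see the gerb is trivial. The heuristic is already in place in the excerpt: by Lemma \ref{simple}, a simple quasiparabolic structure corresponds to four points $U_i\in T_i\subset\pp^2$ with three non-colinear ones distinct from the origin, hence (by the discussion on page \pageref{conicpage}) to a unique conic $C$ through the origin and the $U_i$, and the tangent line to $C$ at the origin gives the map $Q:\Qq^{\rm simple}\to\pp^1$. The first step is to check that $Q$ is $A$-invariant: since $A\subset\mathrm{PGL}_3$ fixes the origin and acts trivially on the pencil of lines through the origin, it sends $C$ to another conic through the origin with the same tangent direction at the origin, so $Q$ descends to $\overline{Q}:\Qq^{\rm simple}\stackquot A\to\pp^1$.

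Next I would upgrade the target from $\pp^1$ to the non-separated scheme $\Pp$. Over $U=\pp^1-\{t_1,\dots,t_4\}$ the conic $C$ is automatically smooth, so nothing extra happens; the doubling occurs exactly over each $t_i$, where (again by page \pageref{conicpage}) either $U_i$ is at the origin ($u_i=\infty$), or $C$ degenerates into $T_i\cup T'$ with the three other $U_j$ colinear on $T'$. These are the two sheets $\Pp^+$, $\Pp^-$: send the locus $\{u_i=\infty\}$ to the point $t_i\in\Pp^-$ (say) and the locus where the other three $P_j$ lie in a common $\Oo\subset B$ to $t_i\in\Pp^+$. One checks these two loci are disjoint in $\Qq^{\rm simple}$ (both happening would force all $U_i$ colinear, contradicting simplicity), so the refined map $\overline{Q}:\Qq^{\rm simple}\stackquot A\to\Pp$ is well-defined and I claim it is representable and its fibers are $B\Gm$. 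For the fiber computation: fixing the image point pins down the conic $C$ (or its degenerate type) up to the $A$-action, and since simple parabolic bundles have automorphism group exactly $\Gm$ (the scalars in $A$, as in Lemma \ref{simple}), the stabilizer is everywhere $\Gm$ and the $A$-orbit of a fiber is a single point of $\Pp$; a dimension count ($\dim\Qq=4$, $\dim A/\Gm=3$, $\dim\Pp=1$) confirms the fibers are $0$-dimensional stacks, hence $B\Gm$.

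To see the gerb is \emph{trivial}, i.e.\ that $\Qq^{\rm simple}\stackquot A\cong \Pp\times B\Gm$, I would exhibit a section of $\overline{Q}$, equivalently an $A$-equivariant family (up to the central $\Gm$) of simple quasiparabolic structures over $\Pp$ realizing each tangent direction exactly once. Concretely: over $U$, a direction $\xi\in\pp^1\setminus\{t_i\}$ determines a unique smooth conic through the origin with tangent $\xi$ and meeting the $T_i$; normalizing by a choice of representative in each $A$-orbit (e.g.\ fixing a frame using three of the $U_i$) gives a rational section, and one checks it extends over the doubled points $t_i^\pm$ using the degenerate-conic description. Finally, to match Arinkin's stack: I would quote \cite{Arinkin} and verify that the moduli problem of rank-two parabolic bundles of degree $1$ with simple parabolic structure on $(\pp^1,\{t_1,\dots,t_4\})$ is literally the one Arinkin considers, so the identification of coarse spaces with $\Pp$ and of stacks as trivial $\Gm$-gerbs is the same.

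\textbf{The main obstacle} I anticipate is not the $\pp^1$-level statement but the careful bookkeeping at the four special fibers $Q=t_i$: verifying that the two degenerate loci are exactly the two sheets $\Pp^\pm$, that they glue correctly to give the \emph{non-separated} scheme (rather than, say, a single doubled point or a nodal identification), and that the section extends across them $A$-equivariantly. This is where the asserted ``non separated phenomenon'' of Lemma \ref{ParabolicModuliStack} actually lives, and it requires a local analysis near each $u_i=\infty$ comparing the two limiting behaviors of the conic. The triviality of the gerb and the comparison with Arinkin are by contrast fairly formal once the map to $\Pp$ is established.
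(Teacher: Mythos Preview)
Your approach via the conic construction is genuinely different from the paper's. The paper proceeds by explicit charts: it covers $\Qq^{\rm simple}$ by four opens $\Qq^i$ (where $u_j\neq\infty$ for $j\neq i$ and those three $U_j$ are non-colinear), normalizes the three $U_j$ to fixed frame points $U_j^0$ using the $A$-action, and reads off $\Qq^i/A\cong\pp^1$ from the remaining free coordinate $U_i\in T_i$. The pairwise gluings $\Qq^{ij}/A\cong\Gm$ are then computed directly, and one sees the doubled points appear exactly over the $t_i$. Triviality of the gerb follows because $\Gm$-torsors over $\aaa^1$ and over $\Gm$ are trivial. Your conic-tangent map $Q$ is what the paper mentions, immediately after its proof, as the ``more canonical $A$-invariant morphism from $\Qq^{\rm simple}$ to $\pp^1$,'' but the paper does not use it to prove the lemma.

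Your route can be made to work, but there is a real gap. The dimension count you give (``fibers are $0$-dimensional stacks, hence $B\Gm$'') does not by itself show that each fiber of $\overline Q$ over a point of $\Pp$ is a \emph{single} $A$-orbit; it only shows generic fibers are finite. The assertion ``fixing the image point pins down the conic $C$ up to the $A$-action'' is precisely what needs proof. Over a generic $\xi\neq t_i$ this comes down to: after using $A$ to pin three of the $U_j$ to fixed frame points, the pencil of conics through the origin and those three points maps isomorphically to the $\pp^1$ of tangent directions at the origin, so $\xi$ determines the conic and hence the fourth point $U_4=C\cap T_4$. This is true (a pencil of conics through four general points has a unique member with prescribed tangent at one of them), but you have to say it; without it, injectivity of $\overline Q$ on coarse spaces is unproven. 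At the special fibers $\xi=t_i$ the same transitivity must be checked separately on each of the two degenerate loci you identify. Your triviality-of-gerb sketch (``normalize to get a rational section, check it extends'') is also weaker than the paper's argument, which gets triviality essentially for free from the chart description and the vanishing of $H^1(\Gm)$ on affine pieces; in your approach you would have to actually produce the section across the non-separated points, which is exactly where your anticipated obstacle lives.
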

\begin{proof}
Consider the open set $\Qq ^i\subset \Qq$ consisting of $(u_1,u_2,u_3,u_4)$ such that
$u_j\neq \infty$ for $j\neq i$, and the three corresponding points $U_j$ are not
colinear. The four open sets $\Qq ^i$ cover $\Qq ^{\rm simple}$ from the 
discussion of Lemma \ref{simple}. Fix $U_j^0\in T_j-0$ such that no three of
them is colinear. Any point of $\Qq ^i$ can be brought by a unique element of 
$A$ to a point $(U_1,\ldots , U_4)$ such that $U_j=U_j^0$ for $j\neq i$,
then the position of $U_i\in T_i\cong \pp^1$ provides a coordinate for the quotient $\Qq ^i/A$.
This gives 
$$
\Qq ^i/A  \cong \pp ^1
$$
for each $i$. 
Consider next the intersection $\Qq ^{ij}=\Qq ^i\cap \Qq ^j$. 
Let $U_k$ and $U_l$ be the other two points. Up to the action of $A$,
they may be supposed to lie on the framing points $U^0_k$ and $U^0_l$.
Let $H$ be the line passing through $U^0_k$ and $U^0_l$.
Then $\Qq ^{ij}$ consists of the choices of $U_i\in T_i-0-H\cap T_i$ and $U_j\in T_j-0-H\cap T_j$. The group $A$ acts by scaling both of these. 
Thus, $\Qq ^{ij}/A\cong \Gm$. Glueing together the
two charts $\Qq^i/A$ and $\Qq^j/A$ along the intersection $\Qq ^{ij}/A$ is
therefore 
a doubled projective line
$$
(\Qq ^i\cup \Qq ^j )/A\cong \pp ^1 \cup ^{\Gm}\pp ^1.
$$
It may also be seen as the quotient 
$$
\left(
\pp ^1\times \pp ^1 -\{ (0,0),(0,\infty ), (\infty ,0),(\infty , \infty )\} 
\right) /\Gm .
$$
To get a global picture, fix $i=1$. Now $\Qq ^1/A = \pp ^1$,
a projective line which is identified with $T_1$ when the other three points
are at $U_j^0$. When we glue in $\Qq ^2/A\cong \pp^1$ this doubles up
the origin $0\in T_1$ as well as the intersection point $I_{34}$ of the line $\overline{U^0_3U^0_4}$
with $T_1$. Similarly when we glue in $\Qq ^3/A\cong \pp^1$ it doubles up
the origin (in the same way) and the intersection point $I_{24}$, and
when we glue in $\Qq ^4/A\cong \pp^1$ it doubles up the origin and $I_{23}$. 
One can see that the quadruple of points $(0,I_{34}, I_{24}, I_{23})$ is
equivalent to the original $(t_1,t_2,t_3,t_4)$. Thus 
$$
\Qq ^{\rm simple}/A \cong \Pp .
$$
The gerb is the same as Arinkin's: he was also looking at the moduli
stack of quasiparabolic bundles. These $\Gm$-gerbs are in fact trivial,
as may be seen directly over each chart $\pp ^1$ and on the glueing
from the fact that $\Gm$-torsors over $\Gm$ or $\aaa ^1$ are trivial. 
\end{proof}

The construction using conics described on page
\pageref{conicpage} gives a more canonical $A$-invariant morphism
from $\Qq ^{\rm simple}$ to $\pp ^1$. 

Recall that $\Hh ({\bf r})\rightarrow \Qq$ denotes the moduli space of connections on
the quasi-parabolic bundles parametrized by $\Qq$. Keep the hypothesis that
${\bf r}\in \Nn ^1_1$ is nonspecial. From 
Lemma \ref{simple} it follows that the map
may be written as $\Hh ({\bf r})\rightarrow \Qq ^{\rm simple}$
with $1$-dimensional fibers. We obtain a map
$$
\Mm ^1_1({\bf r})= \Hh ({\bf r})\stackquot A \stackrel{\Phi}{\rightarrow} \Pp .
$$
Our main result identifies this map with the quotient by the
relation defined by Higgs limits under the $\Gm$-action. 

\begin{theorem}
\label{stabletheorem}
Suppose ${\bf r}\in \Nn ^1_1$ and $\alpha$ is an assignment of parabolic weights,
both nonspecial. Suppose that $\alpha$ is in the stable zone, i.e.
\eqref{staba}, \eqref{stabb} and \eqref{stabc} hold. There is a
set-theoretical isomorphism,
constructibly algebraic but not a morphism of stacks, from the points of $\Pp$ to the
fixed point set of $\Gm$ acting on the moduli space of $\alpha$-stable strictly
parabolic Higgs bundles
$$
{\bf V}_{\alpha}:\Pp \stackrel{\cong}{\rightarrow}(\Mm ^{1,\alpha}_0)^{\Gm}(0),
$$
such that 
for any $(E,\nabla ,P_{\hdot})\in \Mm ^{d,\alpha}_1({\bf r})$ we have
$$
\lim _{u\rightarrow 0} (E,u\nabla ,P_{\hdot}) = {\bf V}_{\alpha}(\Phi (E,\nabla ,P_{\hdot})).
$$
Here the limit is taken in the $\alpha$-stable Hodge moduli stack 
$\Mm ^{d,\alpha}(\lambda {\bf r})\rightarrow \aaa ^1$. 
\end{theorem}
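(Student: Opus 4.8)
The plan is to follow the template of the proof of Theorem \ref{unstabletheorem}: first describe all $\Gm$-fixed points in the $\alpha$-stable Hodge moduli stack, then match that description with $\Pp$ through the map $\Phi$. The new feature of the stable zone is that a generic quasiparabolic bundle on $B$ is already $\alpha$-stable, so for most $(E,\nabla,P_{\hdot})$ the Higgs limit is simply the underlying parabolic bundle with $\theta=0$, and a nonzero Higgs field appears only over a finite set of special configurations.

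\textbf{Step one: classify $(\Mm^{1,\alpha}_0)^{\Gm}(0)$.} A $\Gm$-fixed $\alpha$-stable strictly parabolic Higgs bundle of degree $1$ with vanishing residues is either of the form $(B,P_{\hdot},0)$ with $(B,P_{\hdot})$ an $\alpha$-stable parabolic bundle — this is Lemma \ref{thetaneqzero} when $\theta=0$ — or a nontrivial system of Hodge bundles $E=E^0\oplus E^1$ with $\theta:E^0\to E^1\otimes\Omega^1_X(\log D)$ nonzero. From $\theta\neq 0$ one gets $\deg E^0\le \deg E^1+2$, hence $\deg E^0\le 1$, and the stability inequality \eqref{shbstab} together with the stable-zone inequalities \eqref{staba}--\eqref{stabc} restricts to $\deg E^0\in\{0,1\}$ with a bound on $|\Sigma(E^1)|$. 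Strict parabolicity — which forces $\operatorname{Res}_{t_i}(\theta)$ to kill $P_i$ and to carry $E_{t_i}$ into $P_i$, hence to vanish at every $t_i$ with $P_i\neq E^1_{t_i}$ — then makes $\theta$ vanish at each $t_i$ with $i\notin\Sigma(E^1)$; counting zeros of $\theta$, a section of a line bundle of degree $\deg E^1-\deg E^0+2\in\{1,3\}$, determines $|\Sigma(E^1)|$ exactly. The outcome is that for each index $l$ there is exactly one such Hodge bundle: of type $(\deg E^0,\deg E^1)=(1,0)$ with $\Sigma(E^1)=\{1,2,3,4\}\setminus\{l\}$ and $\theta$ vanishing at $t_l$ when $\epsilon_{\rm tot}-2\epsilon_l<1/2$, and of type $(0,1)$ with $\Sigma(E^1)=\{l\}$ and $\theta$ vanishing at the other three points when $\epsilon_{\rm tot}-2\epsilon_l>1/2$; in either case $\theta$ is unique up to the $\Gm$-action, so the Hodge bundle is rigid.

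\textbf{Step two: match with $\Pp$, build ${\bf V}_\alpha$, and prove the limit formula.} Via Lemma \ref{ParabolicModuliStack} a point of $\Pp\cong\Qq^{\rm simple}\stackquot A$ is a simple quasiparabolic bundle $(B,P_{\hdot})$, and with this identification $\Phi$ is the forgetful map $(E,\nabla,P_{\hdot})\mapsto(E,P_{\hdot})$. Revisiting the parabolic-bundle stability computations of Section \ref{sec-HiggsLimit} in the stable zone, a simple $(B,P_{\hdot})$ fails to be $\alpha$-stable precisely when it is one of the eight special configurations over the $t_i$: either $u_i=\infty$, destabilised by $\Oo(1)\subset B$ when $\epsilon_{\rm tot}-2\epsilon_i<1/2$, or the three $P_j$ with $j\neq i$ contained in a common $\Oo\subset B$, destabilised by that $\Oo$ when $\epsilon_{\rm tot}-2\epsilon_i>1/2$; hence over each $t_l$ exactly one of the two points of $\Pp$ is $\alpha$-unstable. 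Define ${\bf V}_\alpha$ to send an $\alpha$-stable point $(B,P_{\hdot})$ of $\Pp$ to $(B,P_{\hdot},0)$, and each of the four unstable points to the associated graded of its destabilising quasiparabolic sub-line-bundle equipped with the Higgs field singled out in Step one; passing to the associated graded, the unstable point over $t_l$ yields the type-$(0,1)$ Hodge bundle when $\epsilon_{\rm tot}-2\epsilon_l>1/2$ and the type-$(1,0)$ one otherwise, so comparison with Step one shows ${\bf V}_\alpha$ is a set-theoretic bijection, constructibly algebraic but (as in Theorem \ref{unstabletheorem}) not a morphism of stacks because of the jump between the $\theta=0$ and $\theta\neq 0$ loci. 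Finally, for $(E,\nabla,P_{\hdot})\in\Mm^{1,\alpha}_1({\bf r})$, Proposition \ref{higgslimit} and its proof describe $\lim_{u\to 0}(E,u\nabla,P_{\hdot})$ as $(E,P_{\hdot},0)$ when $(E,P_{\hdot})$ is $\alpha$-stable and as the associated graded of the destabilising sub with Higgs field the off-diagonal block $\theta'$ of $\nabla$ otherwise; in the first case this is ${\bf V}_\alpha(\Phi(E,\nabla,P_{\hdot}))$ by definition, and in the second $\theta'$ is nonzero (else $(E,\nabla)$ would be reducible, against Lemma \ref{kgeneric}), of the prescribed degree, and of vanishing residue at each $t_i$ where the destabilising line meets $P_i$ because $\nabla$ preserves $P_i$, so $\theta'$ satisfies exactly the constraints that made $\theta$ unique up to $\Gm$ in Step one, whence $\lim_{u\to 0}(E,u\nabla,P_{\hdot})\cong{\bf V}_\alpha(\Phi(E,\nabla,P_{\hdot}))$.

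\textbf{Main obstacle.} The crux is Step one: extracting from the interplay of \eqref{shbstab}, the stable-zone inequalities, and strict parabolicity the precise statement that there are exactly four $\Gm$-fixed strictly parabolic $\alpha$-stable Higgs bundles with $\theta\neq 0$, each rigid up to $\Gm$; and, conceptually, recognising that although these Higgs bundles have non-simple underlying quasiparabolic bundles they are still hit by ${\bf V}_\alpha$, because forming the associated graded of the destabilising filtration changes the quasiparabolic structure. Once this bookkeeping is in place, the remainder is a re-run of the unstable-zone arguments.
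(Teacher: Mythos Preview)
Your proposal is correct and follows essentially the same route as the paper: identify $\Pp$ with simple quasiparabolic bundles via Lemma~\ref{ParabolicModuliStack}, send $\alpha$-stable ones to themselves with zero Higgs field, send the finitely many $\alpha$-unstable ones to the associated graded of their destabilising filtration with the induced nilpotent Higgs field, and invoke Proposition~\ref{higgslimit} for the limit formula. Your case analysis of which of the two points of $\Pp$ over each $t_l$ is $\alpha$-unstable, and which Hodge type it yields, matches the paper's discussion immediately following its proof.

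The one genuine addition in your write-up is Step~one: the paper's proof constructs ${\bf V}_\alpha$ and verifies the limit identity, but does not separately classify $(\Mm^{1,\alpha}_0(0))^{\Gm}$ and check that ${\bf V}_\alpha$ is surjective onto it. Your direct enumeration---ruling out $(\deg E^0,\deg E^1)\notin\{(1,0),(0,1)\}$ via \eqref{stabb}, ruling out the wrong values of $|\Sigma(E^1)|$ by combining \eqref{shbstab} with \eqref{staba}--\eqref{stabc} and the zero-count of $\theta$---supplies this missing verification and makes the bijectivity claim in the theorem statement fully explicit. The paper instead relies implicitly on the fact that every $\Gm$-fixed Higgs bundle arises as some limit (so the image of the limit map, which factors through ${\bf V}_\alpha\circ\Phi$, already exhausts the fixed locus).
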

\begin{proof}
Recall that $\Pp = \Qq ^{\rm simple}/A$ is the space of $A$-orbits in the simple
quasiparabolic structures, so a point of $\Pp$ represents an isomorphism class of
simple quasiparabolic bundle $(E,P_{\hdot})$ and $\Phi$ is just
the map of forgetting the connection. If $(E,P_{\hdot})$ is $\alpha$-stable,
then take $\theta =0$ as Higgs field 
and set ${\bf V}_{\alpha}(E,P_{\hdot}):= (E,0,P_{\hdot})$. 
If $\nabla$ is any connection on $(E,P_{\hdot})$ then this gives the limiting
$\alpha$-stable Higgs bundle of Proposition \ref{higgslimit}
$$
\lim _{u\rightarrow 0}(E,u\nabla , P_{\hdot})= 
(E,0,P_{\hdot}) = {\bf V}_{\alpha}(E,P_{\hdot}).
$$
It remains to define ${\bf V}_{\alpha}$ on the $(E,P_{\hdot})$ which are
$\alpha$-unstable. Suppose $(E,P_{\hdot})$ is $\alpha$-unstable,
and let $L\subset E$ be the destabilizing subbundle. Since $\alpha$
is in the stable zone, condition \eqref{stabb} says that $L$ is never $\Oo (-1)$.
There are two cases: either $L\cong \Oo$ and there are three $P_i=L_{t_i}$;
or $L=\Oo (1)$ and there is one $P_i=L_{t_i}$. The first case
corresponds to three colinear points $U_i$, while the second case corresponds to
some $U_i$ at the origin. 

The residues of the Higgs field are $0$, which means that 
$\res (\theta , t_i) : E_{t_i}\rightarrow P_i$
and $\res (\theta , t_i):P_i\rightarrow 0$. So we have 
$$
\theta : L\rightarrow (E/L)\otimes \Omega ^1_X(\log D),
$$
which is equal to zero at any point where $P_i=L_{t_i}$. If $L\cong \Oo$
then $\theta$ is a section of a line bundle of degree three with three additional
zeros at the three points $t_i$ with $U_i$ colinear; 
if $L\cong \Oo (1)$ then $\theta$ is a section of
a line bundle of degree $1$ with a single additional zero at the point $t_i$ where
$U_i$ is the origin. In both cases, $\theta$ becomes a nonzero section of
the trivial bundle, in other words it is determined uniquely up to scalar automorphisms
of the two component line bundles. This determines
the Higgs bundle
$$
{\bf V}_{\alpha}(E,P_{\hdot}):= (L\oplus (E/L),\theta )
$$
which will be the limit $\lim _{u\rightarrow 0}(E,u\nabla , P_{\hdot})$
by the construction of Proposition \ref{higgslimit}, for any connection 
$\nabla$ on $(E,P_{\hdot})$. 
\end{proof}

We can be more precise about the possibilities occuring in the above proof. 
There are two points of $\Pp$ over each $t_i\in \pp ^1$. These are the cases 
when $U_i=0$, and when the other three points $U_j,U_k,U_l$ are colinear.
The quasiparabolic structure with $U_i=0$ is unstable if and only if 
$$
1+\epsilon _i -\epsilon _j - \epsilon _k - \epsilon _l > 1/2,
$$
in other words 
$$
\epsilon _j + \epsilon _k + \epsilon _l - \epsilon _i < 1/2.
$$
The quasiparabolic structure with $U_j,U_k,U_l$ colinear is unstable if and
only if 
$$
\epsilon _j + \epsilon _k + \epsilon _l - \epsilon _i > 1/2.
$$
In other words, the point $t_i$ corresponds to the hyperplane 
$\epsilon _j+\epsilon _k + \epsilon _l -\epsilon _i = 1/2$ which divides the
stable zone into two regions, and the question of which of the two points 
lying over $t_i$ is unstable depends on which side of this hyperplane we are on. 
 
The resulting $16$ subzones are quite probably related to the subzones which will show
up as images by the Okamoto symmetry of the various different unstable zones in the
last two sections of the paper.  

\begin{corollary}
\label{foliationstable}
The foliation conjecture of \cite{idsm} holds for rank two parabolic connections
on $\pp ^1-\{t_1,t_2,t_3,t_4\}$ when the residues and parabolic weights are
nonspecial, and the parabolic weights are in the stable zone. 
\end{corollary}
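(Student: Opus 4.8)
The plan is to argue just as in the proof of Corollary~\ref{foliationunstable}, replacing Theorem~\ref{unstabletheorem} by Theorem~\ref{stabletheorem} and the morphism $\Upsilon$ by $\Phi$. After possibly performing a pair of elementary transformations and twisting — which by Lemmas~\ref{et} and \ref{etpermute} preserve both the stable zone and the oddness of the degree — we may assume $d=1$. Theorem~\ref{stabletheorem} then supplies a set-theoretic bijection ${\bf V}_{\alpha}:\Pp\stackrel{\cong}{\to}(\Mm^{1,\alpha}_0)^{\Gm}(0)$ satisfying $\lim_{u\to 0}(E,u\nabla,P_{\hdot})={\bf V}_{\alpha}(\Phi(E,\nabla,P_{\hdot}))$. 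Since ${\bf V}_{\alpha}$ is injective on points, two closed points of $M^1_1({\bf r})$ have the same Higgs limit if and only if they have the same image under $\Phi$; so the Higgs limit decomposition of $M^1_1({\bf r})$ is precisely the decomposition into fibers of $\Phi$.

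It then remains to check that $\Phi : M^1_1({\bf r})\to\Pp$ is a smooth morphism, so that its fibers are the leaves of a foliation. This is assembled from the earlier results: by Corollary~\ref{Hfib} the map $\Hh({\bf r})\to\Qq^{\rm simple}$ is a smooth fibration with fibers $\aaa^1$; by Lemma~\ref{ParabolicModuliStack} the quotient $\Qq^{\rm simple}\stackquot A$ is a $\Gm$-gerb over $\Pp$; combining these, $\Mm^1_1({\bf r})=\Hh({\bf r})\stackquot A\to\Pp$ is smooth with one-dimensional fibers, and passing to the coarse moduli space $M^1_1({\bf r})$ — which is smooth, separated and quasiprojective by Proposition~\ref{kgensmooth} — one obtains a smooth surjection $\Phi$ onto the smooth (one-dimensional, non-separated) scheme $\Pp$. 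The fibers of a smooth morphism onto a smooth curve form a foliation of the source, which gives the statement.

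The only point requiring a little care is the non-separatedness of $\Pp$: one should check that the fiber decomposition of $\Phi$ is, locally on $M^1_1({\bf r})$, the fiber decomposition of an honest smooth morphism, even near the eight points lying over the $t_i$. This is handled chart-by-chart using the four charts $\Qq^i/A\cong\pp^1$ of Lemma~\ref{ParabolicModuliStack}: over each of them $\Phi$ restricts to a genuine smooth morphism to $\pp^1$, and these glue to $\Phi$ over $\Pp$, so the local foliations patch to a global one. I expect this bookkeeping to be the only, and a very minor, obstacle; everything else is a direct transcription of the unstable-zone argument.
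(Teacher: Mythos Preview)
Your proposal is correct and follows essentially the same approach as the paper: invoke Theorem~\ref{stabletheorem} to identify the Higgs limit decomposition with the fibers of $\Phi:M^1_1({\bf r})\to\Pp$, then observe that $\Phi$ is a smooth morphism so its fibers form a foliation. The paper's proof is terser---it simply asserts that the map to $\Pp$ is smooth and that non-separatedness of the target does not obstruct the foliation conclusion---whereas you supply the supporting references (Corollary~\ref{Hfib}, Lemma~\ref{ParabolicModuliStack}, Proposition~\ref{kgensmooth}) and spell out the chart-by-chart verification over the doubled points, which is a welcome elaboration rather than a different argument.
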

\begin{proof}
By Theorem \ref{stabletheorem}, the pieces of the decomposition according to the
Higgs limit are equal to the fibers of the map
$M ^{d,\alpha}_1({\bf r})\rightarrow \Qq ^{\rm simple}/A = \Pp$. Since this
is a smooth map of schemes, even though the target is non-separated, the collection
of fibers forms a foliation. 
\end{proof}


\section{Local systems on root stacks}
\label{sec-dm}

Consider local systems with monodromy of  finite order around the $t_i$. 
Fix $n\in \nn$  and let 
$$
Z:= X[\frac{D}{n}] \stackrel{p}{\rightarrow} X
$$
be the Cadman-Vistoli root stack, which is the universal Deligne-Mumford
stack over which the line bundle $\Oo (D)$ has an $n$-th root;
a good reference is \cite{Borne}. 
It corresponds to the orbifold obtained
by labeling the points $t_i\in X$ with the integer $n$. The fundamental group
$\pi _1(Z,x)$ is also the orbifold fundamental group of $X$, equivalently
it is $\pi _1(U,x)/\langle \gamma _i^n\rangle$ where $\gamma _i$ are the loops
going around $t_i$. 

In this case the DM-stack $Z$ is a quotient stack. Let $C_n$ be the cyclic
group of order $n$ with generator $c$. Choose a homomorphism $g:\pi _1(U,x)\rightarrow
C_n$ such that $g(\gamma _i)$ is a generator. This exists, for example
we can set $g(\gamma _1)=g(\gamma _2)=c$ and $g(\gamma _3)=g(\gamma _4)= c^{-1}$.
Then $g$ induces a Galois covering $Y\stackrel{q}{\rightarrow} X$ with Galois group $C_n$
and full degree $n$ ramification
over the $t_i$, lifting to an etale Galois covering of the stack $\tilde{q}:Y\rightarrow Z$.
This gives 
$$
Z=Y\stackquot C_n.
$$
Let $\tilde{t}_i\in Y$ be the unique point lying over $t_i\in X$. 

\begin{proposition}
\label{categories}
With the above notations, the following categories are equivalent:
\newline
---local systems on $U$ with finite monodromy of order dividing $n$ around the 
$t_i$;
\newline
---local systems on $Z$;
\newline
---$C_n$-equivariant 
local systems on $Y$. 
\end{proposition}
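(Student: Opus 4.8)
The plan is to reduce all three categories to the category of finite-dimensional complex representations of one group and then read off the equivalences from the elementary presentations of the fundamental groups involved. The single input that is not purely formal is the assertion that, for the quotient Deligne--Mumford stack $Z = Y\stackquot C_n$ built from the smooth curve $Y$, the category of local systems on $Z$ is equivalent to $\mathrm{Rep}\bigl(\pi_1(Z,x)\bigr)$, where $\pi_1(Z,x)$ is the orbifold fundamental group, already identified in the excerpt with $\pi_1(U,x)/\langle\gamma_i^n\rangle$. I would invoke this once at the outset, citing \cite{Borne} or the standard theory of fundamental groups of DM stacks. Pinning down what ``local system on $Z$'' means, and why $Z$ is ``good'' in the sense that local systems on it are computed by $\pi_1$, is the step I expect to require the most care --- though for a finite-group quotient of a smooth curve it is classical --- and it is really the only obstacle; granting it, the proposition reduces to bookkeeping.

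For $(2)\Leftrightarrow(3)$ I would use descent along the representable finite étale Galois morphism $\tilde q : Y\to Z$ with group $C_n$, which is exactly the presentation $Z=Y\stackquot C_n$ recalled in the excerpt. Faithfully flat descent is effective for locally constant sheaves of finite-dimensional $\cc$-vector spaces, so pullback along $\tilde q$ together with its tautological descent datum identifies local systems on $Z$ with $C_n$-equivariant local systems on $Y$, the quasi-inverse being descent. Equivalently, one can phrase this through the short exact sequence $1\to\pi_1(Y,\tilde x)\to\pi_1(Z,x)\to C_n\to 1$ coming from the étale cover: a $C_n$-equivariant local system on $Y$ is a representation of $\pi_1(Y,\tilde x)$ equipped with compatible data over $C_n$, i.e. a representation of $\pi_1(Z,x)$, i.e. a local system on $Z$.

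For $(1)\Leftrightarrow(2)$ I would use that $p:Z\to X$ is an isomorphism over $U$: restriction sends a local system on $Z$ to one on $U$ whose monodromy around $t_i$ has order dividing $n$, since $\gamma_i^n=1$ in $\pi_1(Z,x)$; conversely a local system on $U$ with $\mathcal{L}(\gamma_i)^n=\mathrm{id}$ is a representation of $\pi_1(U,x)$ that factors through the quotient $\pi_1(Z,x)$, hence defines a local system on $Z$, and the two functors are mutually quasi-inverse by the presentation of $\pi_1(Z,x)$. Alternatively one can argue $(1)\Leftrightarrow(3)$ directly without stacks: pull a local system on $U$ back along the genuinely étale $C_n$-cover $V:=q^{-1}(U)=Y\setminus\{\tilde t_1,\ldots,\tilde t_4\}\to U$, under which a small loop around $\tilde t_i$ maps to $\gamma_i^n$; the pullback extends across $\tilde t_i$ --- uniquely, by the removable-singularity property of local systems on a punctured disc with trivial local monodromy --- to a local system on the compact curve $Y$ precisely when $\mathcal{L}(\gamma_i)^n=\mathrm{id}$, and the $C_n$-equivariant structure carries over to the extension. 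Assembling the three identifications exhibits all three categories as $\mathrm{Rep}\bigl(\pi_1(Z,x)\bigr)$, which finishes the proof.
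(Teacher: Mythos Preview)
The paper states this proposition without proof, treating it as well-known background (note the remark a few lines later calling Proposition \ref{logconnection} ``well-known'' as well). Your argument is correct and is exactly the standard one: reduce everything to $\mathrm{Rep}\bigl(\pi_1(Z,x)\bigr)$ via the orbifold/stack fundamental group, use descent along the \'etale $C_n$-cover $\tilde q:Y\to Z$ for $(2)\Leftrightarrow(3)$, and use the presentation $\pi_1(Z,x)=\pi_1(U,x)/\langle\gamma_i^n\rangle$ for $(1)\Leftrightarrow(2)$. Your alternative direct route $(1)\Leftrightarrow(3)$ through the genuine \'etale cover $V\to U$ and removable singularities is also fine and is perhaps the most elementary of the three pairings. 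There is nothing to compare against in the paper itself.
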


Given a local system $L$ on $Z$ corresponding to $L_U$ on $U$ and to 
a $C_n$-equivariant local system $L_Y$ on $Y$, we can associate its {\em local monodromy} 
at $t_i$.
This is an object in the category of vector spaces with automorphisms. In terms of $L_U$
it is just the fiber $L_{U,x}$ at the basepoint, together with action of $\gamma _i$.
 
Corresponding to the point $t_i$ is a map ${\bf B}(\zz /n)\rightarrow Z$
from the one-point classifying stack of the cyclic group $\zz /n$ into $Z$,
and in terms of $L$ the local monodromy is the same as the restriction 
$L|_{{\bf B}(\zz /n)}$,
considering a local system over ${\bf B}(\zz /n)$ as being the same as a vector space with
an automorphism of order $n$. 
 
In terms of the $C_n$-equivariant local system $L_Y$
on $Y$, 
the local monodromy is the fiber 
$L_{Y,\tilde{t}_i}$ together with its action of the Galois group
$C_n$, 
but this action
is viewed as an automorphism (i.e. an action of the
local orbifold group $\zz /n$)
using the generating element $g(\gamma _i)\in C_n$. This may be different
from the original generator $c$, which is why we conserved two different
notations $C_n$ and $\zz /n$ for these cyclic groups.

Given a local system $L$ on $Z$, its corresponding sheaf of $\Oo_Z$-modules
is denoted $L\otimes \Oo _Z$. Then 
$$
E:= p_{\ast}(L\otimes \Oo _Z)
$$
is a locally free sheaf on $X$, whose rank is the same as ${\rm rk}(L)$. 
If $L$ corresponds to the $C_n$-equivariant 
local system $L_Y$ on the Galois covering $Y$, with underlying vector bundle $L_Y\otimes \Oo _Y$, then the $C_n$-invariant
part of the direct image is 
$$
E=q_{\ast}(L_Y\otimes \Oo _Y)^{C_n}\subset q_{\ast}(L_Y\otimes \Oo _Y),
$$
indeed since $Y$ provides local charts for the stack $Z$ this may be taken as
the definition of $E$. 

The following proposition is well-known. 

\begin{proposition}
\label{logconnection}
The naturally defined connection on $V|_U$
extends to a logarithmic connection 
$$
\nabla : E\rightarrow E\otimes \Omega ^1_X(\log D).
$$
The residue of $\nabla$ at $t_i$ is semisimple and has eigenvalues in $[0,1)\cap \frac{1}{n}\zz$. More precisely, suppose that the local monodromy of $L$ at $t_i$, in the clockwise direction,
has eigenvalues $e^{\theta _i^j\sqrt{-1}}$
with $0\leq \theta _i^j< 2\pi $ counted with multiplicity. 
Then the residue of $\nabla$ at $t_i$
is semisimple
with eigenvalues $r^{j}_i= \theta_i^j/2\pi $.

This construction sets up an equivalence of categories between 
the category of local systems $L$ on $Z$, and the category of vector bundles with 
logarithmic connection $(E,\nabla )$
whose residues are semisimple with eigenvalues in $[0,1)\cap \frac{1}{n}\zz$.
\end{proposition}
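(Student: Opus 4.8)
The plan is to reduce everything to an explicit local model at each $t_i$, together with Deligne's theory of the normalized (canonical) extension. Away from $D$ the stack morphism $p$ is an isomorphism, so $E|_U$ is canonically the bundle $V:=L_U\otimes\Oo_U$ underlying the local system, carrying its tautological flat connection, and nothing is to be checked there. Near $t_i$ one may use a chart of the root stack of the form $[\Delta_w/\mu_n]$, where $\Delta_w$ is a disc with coordinate $w$, $\mu_n$ acts by $w\mapsto\zeta w$ with $\zeta=e^{2\pi\sqrt{-1}/n}$, and $z=w^n$ is a coordinate on $X$ at $t_i$ (equivalently one works on the \'etale cover $Y$ and takes $C_n$-invariants, exactly as in the statement). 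Since $\Delta_w$ is simply connected, the restriction of $L$ to this chart is the $\mu_n$-equivariant local system attached to a vector space $W$ with an action of $\mu_n$; writing $W=\bigoplus_{m=0}^{n-1}W_m$ for the decomposition into $\zeta^m$-eigenspaces of the chosen generator $g(\gamma_i)$ of the local cyclic group — this being precisely the matching of ``local monodromy'' discussed before the proposition — we have $L\otimes\Oo_Z=W\otimes_\cc\Oo_{\Delta_w}$ with connection $d$ in this trivialization.

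The first computation is that $E=p_\ast(L\otimes\Oo_Z)$ near $t_i$ is the $\mu_n$-invariant part of $W\otimes\Oo_{\Delta_w}$. A section $s(w)=\sum_{k\ge 0}w^k a_k$ is invariant precisely when $a_k\in W_k$ for all $k$ (indices mod $n$), so $E$ is freely generated over $\Oo_{X,t_i}$ by the elements $w^m b$ with $b\in W_m$ and $0\le m\le n-1$, since $w^{m+n}b=z\cdot w^m b$. In particular $E$ is locally free of rank $\dim_\cc W=\mathrm{rk}(L)$, and applying $d$ to a frame element, using $dz/z=n\,dw/w$, gives $\nabla(w^m b)=(m/n)(w^m b)\,dz/z$. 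Thus $\nabla$ extends to a logarithmic connection whose residue at $t_i$ is the semisimple operator acting by $m/n$ on the span of the $w^m b$, $b\in W_m$; its eigenvalues lie in $[0,1)\cap\frac{1}{n}\zz$. Finally, the monodromy of $d+R\,dz/z$ around $t_i$ in the clockwise direction is $\exp(2\pi\sqrt{-1}\,R)$ for the relevant sign of $R$, with eigenvalues $e^{2\pi\sqrt{-1}\,(m/n)}$; putting $\theta_i^j=2\pi r_i^j$ with $r_i^j=m/n\in[0,1)$ yields exactly the asserted relation between the clockwise local monodromy eigenvalues $e^{\theta_i^j\sqrt{-1}}$ and the residue eigenvalues $r_i^j=\theta_i^j/2\pi$.

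For the equivalence of categories, factor the functor $L\mapsto(E,\nabla)$ through Proposition \ref{categories}, which identifies local systems on $Z$ with local systems $V$ on $U$ whose monodromy around each $t_i$ has order dividing $n$, and then invoke the normalized extension. For essential surjectivity: given $(E,\nabla)$ with semisimple residues having eigenvalues in $[0,1)\cap\frac{1}{n}\zz$, the monodromy of $E|_U$ around $t_i$ is $\exp(\pm 2\pi\sqrt{-1}\,\res(\nabla,t_i))$, which is semisimple with eigenvalues $n$-th roots of unity, hence of order dividing $n$, so $V:=E|_U$ descends to a local system $L$ on $Z$; the bundle-with-connection built from $L$ above and $(E,\nabla)$ are then two logarithmic extensions of $V$ with residue eigenvalues in $[0,1)$, hence both are the normalized extension, hence canonically isomorphic. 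For full faithfulness: a morphism $(E,\nabla)\to(E',\nabla')$ restricts on the dense open $U$ to a morphism $V\to V'$, which is automatically a morphism of $\pi_1(Z)$-representations since $V$ and $V'$ factor through $\pi_1(Z)=\pi_1(U)/\langle\gamma_i^n\rangle$, hence comes from a morphism of local systems on $Z$; conversely such a morphism extends uniquely across the $t_i$ by functoriality of the normalized extension, and this extension agrees with the original map as both restrict to it on $U$.

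The routine parts are the local freeness count and the connection computation. The one delicate point is the orientation and normalization bookkeeping in the local model — checking that the $\zeta^m$-eigenspace of $g(\gamma_i)$ genuinely produces the residue eigenvalue $m/n$ (and not $1-m/n$) and that this is consistent with the clockwise convention of the statement. The only external input beyond Proposition \ref{categories} is the classical existence and uniqueness of Deligne's normalized extension of a regular singular connection on a punctured disc; the rest is formal.
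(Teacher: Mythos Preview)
The paper does not actually prove this proposition; it is introduced with the sentence ``The following proposition is well-known'' and no argument is supplied. Your proof is a correct and standard justification of this well-known fact: the explicit local computation on the cyclic chart $[\Delta_w/\mu_n]$ (equivalently, taking $C_n$-invariants on the cover $Y$) identifies $E$ with the span of the $w^m b$ for $b\in W_m$, and the formula $\nabla(w^m b)=(m/n)(w^m b)\,dz/z$ then reads off the residue eigenvalues. The appeal to Deligne's canonical extension with residue eigenvalues in $[0,1)$ for the categorical equivalence is likewise the expected move. Your flagged caveat about orientation is on point: with the paper's Leibniz convention and the stated clockwise direction, the monodromy is indeed $\exp(2\pi\sqrt{-1}\,\res)$, so the $\zeta^m$-eigenspace of the local monodromy does produce residue $m/n$ rather than $1-m/n$, consistent with what you wrote.
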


In the situation of the proposition, the bundle $E$ also gets a weighted parabolic structure.
It consists of a quasiparabolic structure or filtration $P^{\hdot}_i$ of $E_{t_i}$,
together with weights $\alpha ^{\hdot}_i\in (-1,0]$. In fact, the filtration is
obtained from the decomposition of $E_{t_i}$ into eigenspaces for ${\rm res}(\nabla )$
and the $j$-th graded piece $Gr^j_{P_i}(E_{t_i})$ is just the $r^j_i$-eigenspace,
weighted by $\alpha ^j_i=-r^j_i$. The index $j$ corresponds to the place of $\alpha ^j_i$
in the increasing order on the interval $(-1,0]$. 

In general, the filtration will not be a full flag.
Say that the local monodromy of $L$
is non-resonant if the eigenvalues of the monodromy transformation
are distinct with multiplicity $1$, corresponding to the same non-resonance condition
for the residue of
the corresponding logarithmic connection. Notice that non-resonance implies
$n\geq {\rm rk}(L)$, otherwise the number of possible available eigenvalues would
be too small.
In the non-resonant case, the parabolic
filtration is a full flag.

Say that a collection of local monodromy data at all the $t_i$ is Kostov-generic
if there is no way of specifying a subset consisting of the same number of eigenvalues
at each point, such that the product over all the points is $1$. Say that the
collection of local monodromy data is nonspecial if it is nonresonant and Kostov-generic. 
This corresponds to the same condition for the logarithmic connection and also 
for the parabolic weights. 

There is a different characterization of the parabolic structure, obtained by looking
at $E$ as $q_{\ast}(L_Y\otimes \Oo _Y)^{C_n}$. Let $y$ be a local coordinate on
$Y$ near $\tilde{t}_i$, then $L_Y\otimes \Oo _Y$ is filtered by the subsheaves
$y^kL_Y\otimes \Oo _Y$. This gives a filtration of $E$
by subsheaves $q_{\ast}(y^kL_Y\otimes \Oo _Y)^{C_n}$.
For $k=n$ the subsheaf is equal to $E(-t_i)$, so for $0\leq k < n$ this defines a subspace
$F^k_i\subset E_{t_i}$. The parabolic subspace $P^j_i$ is defined to be $F^{-n\alpha ^j_i}_i$
where the $\alpha ^j_i$ are the $k/n$ such that the filtration jumps. 

In this point of view, any vector bundle on $Z$ or equivalently $C_n$-equivariant
vector bundle on $Y$ leads to a parabolic bundle on $(X,D)$ with weights 
$\alpha ^j_i\in (-1,0]\cap \frac{1}{n}\zz$. Apply this to vector
bundles with $\lambda$-connection.

\begin{proposition}
\label{lamcon}
The above construction provides an equivalence between the categories of:
\newline
---vector
bundles with $\lambda$-connections on $Z$;
\newline
---$C_n$-equivariant vector
bundles with $\lambda$-connections on $Y$;
\newline
---parabolic bundles $(E,P^{\hdot}_{\hdot}, \alpha ^{\hdot}_{\hdot})$
on $(X,D)$ with weights $\alpha ^j_i\in (-1,0]\cap \frac{1}{n}\zz$
and logarithmic $\lambda$-connection 
$$
\nabla : E\rightarrow E\otimes \Omega ^1_X(\log D)
$$
such that ${\rm res}_{t_i}(\nabla )$ respects the filtration $P^{\hdot}_i$
of $E_{t_i}$ and acts by the scalar $r^j_i=-\lambda \alpha ^j_i$ on $Gr ^j_{P_i}(E_{t_i})$.

For $\lambda = 1$ this correspondence coincides with the correspondences of
Propositions \ref{categories} and \ref{logconnection}.
\end{proposition}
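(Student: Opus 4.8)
The plan is to establish the two equivalences separately: the one between vector bundles with $\lambda$-connection on $Z$ and $C_n$-equivariant ones on $Y$ is pure descent, while the one with parabolic objects on $(X,D)$ comes from Borne's dictionary for the underlying bundles, upgraded to carry the $\lambda$-connection by a local computation at the $t_i$. For the first equivalence, since $Z=Y\stackquot C_n$ and $\tilde q:Y\to Z$ is an \'etale $C_n$-Galois covering, $\tilde q^{\ast}$ identifies vector bundles on $Z$ with $C_n$-equivariant vector bundles on $Y$, and \'etaleness provides a canonical $C_n$-equivariant isomorphism $\tilde q^{\ast}\Omega^1_Z\cong\Omega^1_Y$. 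Hence a $\lambda$-connection on a bundle over $Z$ — a $\cc$-linear map to $\mathcal E\otimes\Omega^1_Z$ satisfying the $\lambda$-twisted Leibniz rule — pulls back to a $C_n$-equivariant $\lambda$-connection on $Y$, and conversely a $C_n$-equivariant $\lambda$-connection on $Y$ descends. This is uniform in $\lambda$.

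\textbf{Pushforward to $(X,D)$.} Given a $C_n$-equivariant $(\mathcal E_Y,\nabla_Y)$ with $\nabla_Y$ \emph{regular}, put $E:=(q_{\ast}\mathcal E_Y)^{C_n}$, which is locally free since $q$ is finite flat and $|C_n|$ is invertible on $\cc$. The filtration of $\mathcal E_Y$ near $\tilde t_i$ by the subsheaves $y^k\mathcal E_Y$ pushes down to the filtration $q_{\ast}(y^k\mathcal E_Y)^{C_n}$ of $E$, whose jumps produce the quasiparabolic filtration $P^{\hdot}_i$ with weights $\alpha^j_i\in(-1,0]\cap\frac1n\zz$ and $P^j_i=F^{-n\alpha^j_i}_i$ exactly as described before the statement; this identification of the underlying bundle with a parabolic bundle is Borne's theorem. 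For the connection one uses that near $t_i$, with $z$ a local equation of $D$ and $z=y^n$, the pullback $q^{\ast}\Omega^1_X(\log D)=\Omega^1_Y(\log\tilde D)$ is generated by $d\log z=n\,d\log y$. Then $q_{\ast}\nabla_Y$ restricts to the invariants and, via the projection formula $\bigl(q_{\ast}(\mathcal E_Y\otimes q^{\ast}\Omega^1_X(\log D))\bigr)^{C_n}=E\otimes\Omega^1_X(\log D)$, yields a logarithmic $\lambda$-connection $\nabla$ on $E$. Writing an invariant generator of $E$ near $t_i$ as $e=y^kv_0$ with $v_0$ a local frame of $\mathcal E_Y$ and $k=-n\alpha^j_i$, the $\lambda$-Leibniz rule gives
$$
\nabla_Y(y^kv_0)=\lambda k\,y^{k-1}\,dy\cdot v_0+y^k\nabla_Y v_0=\frac{\lambda k}{n}\,(y^kv_0)\,d\log z+y^k\nabla_Y v_0 ,
$$
whose last term is regular; hence $\res(\nabla,t_i)$ preserves $P^{\hdot}_i$ and acts on $Gr^j_{P_i}(E_{t_i})$ by $\lambda k/n=-\lambda\alpha^j_i=r^j_i$, as required.

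\textbf{The inverse functor, full faithfulness, and the case $\lambda=1$.} Conversely, starting from a parabolic bundle with logarithmic $\lambda$-connection $\nabla$ whose residue at $t_i$ acts on $Gr^j_{P_i}$ by $-\lambda\alpha^j_i$, Borne's theorem produces $\mathcal E$ on $Z$, equivalently $C_n$-equivariant $\mathcal E_Y$ on $Y$. On $U=X-D=Z-(\text{stacky locus})$ the connection is unchanged, and pulled back to $Y$ it extends a priori only to a logarithmic $\lambda$-connection $\nabla_Y$ on $\mathcal E_Y$ along $\tilde D$ (because $q^{\ast}\Omega^1_X(\log D)=\Omega^1_Y(\log\tilde D)$). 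But a local frame $v_0$ of $\mathcal E_Y$ differs from a parabolic-adapted frame $e$ of $E$ by the factor $y^{k}$ with $k=-n\alpha^j_i\in\zz$, and running the displayed computation in reverse one finds that the $d\log y$-coefficient of $\nabla_Y(v_0)$ cancels exactly; thus $\nabla_Y$ is in fact a regular $\lambda$-connection on $\mathcal E_Y$. It is $C_n$-equivariant by construction and hence descends to $\mathcal E$ over $Z$. Full faithfulness is immediate: a morphism of parabolic bundles intertwining the $\lambda$-connections restricts to an intertwiner on $U$, hence its Borne transform intertwines the connections on $Z$ by torsion-freeness. Thus the three categories are equivalent, and for $\lambda=1$ the pushforward $E=(q_{\ast}\mathcal E_Y)^{C_n}=p_{\ast}(L\otimes\Oo_Z)$ with its induced connection is literally the construction of Propositions \ref{categories} and \ref{logconnection}, with the same relation $\alpha^j_i=-r^j_i$, so the present correspondence refines those.

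\textbf{Main obstacle.} The only genuine content lies in the local analysis at the stacky points: identifying $q^{\ast}\Omega^1_X(\log D)$ with $\Omega^1_Y(\log\tilde D)$ and keeping careful track of the bookkeeping relating the integer $k\in[0,n)$ labelling $y^k\mathcal E_Y$, the character of the $C_n$-action on the chosen local frame, and the parabolic weight $\alpha^j_i\in(-1,0]$, so that ``residue $=-\lambda\alpha^j_i$ downstairs'' is literally equivalent to ``regular $\lambda$-connection upstairs''. Everything else — the descent, the appeal to Borne for the underlying bundles, and full faithfulness — is formal.
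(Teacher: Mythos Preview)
The paper does not actually supply a proof of this proposition: like Propositions \ref{categories} and \ref{logconnection} just before it, the statement is recorded as well-known, with Borne \cite{Borne} as the ambient reference for the root-stack/parabolic correspondence. Your argument is precisely the standard one used to fill this in and is correct: \'etale descent along $\tilde q:Y\to Z$ gives the first equivalence, Borne's dictionary identifies the underlying (quasi)parabolic bundles, and the local computation
$$
\nabla_Y(y^kv_0)=\frac{\lambda k}{n}\,(y^kv_0)\,d\log z + y^k\nabla_Y v_0,\qquad k=-n\alpha^j_i,
$$
shows that regularity of $\nabla_Y$ upstairs is exactly the condition that $\res_{t_i}(\nabla)$ act on $Gr^j_{P_i}(E_{t_i})$ by $-\lambda\alpha^j_i$ downstairs. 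The only place to tighten the exposition is where you write ``an invariant generator of $E$ near $t_i$ as $e=y^kv_0$ with $v_0$ a local frame of $\mathcal E_Y$'': more precisely one chooses a $C_n$-eigenframe of $\mathcal E_Y$ at $\tilde t_i$ (possible since $C_n$ acts linearly on the fiber over $\cc$), and for each eigenvector $v_0$ the exponent $k\in[0,n)$ is determined by the character so that $y^kv_0$ is invariant. With that said explicitly, your proof is complete.
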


Notice that for $\lambda \neq 0$ the parabolic filtration and weights are
determined by $\nabla$. On the other hand, at $\lambda = 0$ the requirement becomes
just that $\nabla$ acts by $0$ on $Gr ^j_{P_i}(E_{t_i})$, in other words it
respects strictly the parabolic filtration as in \cite{GPGM} for example. 
So for $\lambda = 0$ the connection doesn't determine the weights. 

\begin{lemma}
\label{preservestab}
The correspondence of Proposition \ref{lamcon} is compatible with subobjects
and preserves the degree, using the parabolic degree for parabolic logarithmic
$\lambda$-connections. Hence it preserves stability and semistablity, and induces
an isomorphism between moduli stacks. 
\end{lemma}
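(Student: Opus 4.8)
The plan is to deduce the lemma from the underlying equivalence of Borne \cite{Borne} between locally free sheaves on the root stack $Z=X[\tfrac{D}{n}]$ (equivalently $C_n$-equivariant bundles on $Y$) and parabolic bundles on $(X,D)$ with weights in $\tfrac1n\zz$, enriched with a $\lambda$-connection exactly as in Propositions \ref{categories}--\ref{lamcon}. Each clause — compatibility with subobjects, preservation of degree, preservation of (semi)stability, and the resulting isomorphism of moduli stacks — will be reduced to a corresponding property of that equivalence together with the functoriality of $p_\ast$ and of $q_\ast(-)^{C_n}$. Since a bundle on $Z$ records its parabolic weights intrinsically, there is no loss of information even at $\lambda=0$, where the weights are not determined by $\nabla$; this is why the statement is phrased using parabolic degree throughout.

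First I would record the degree formula: for a locally free sheaf $V$ on $Z$ with associated parabolic bundle $(E,P^{\hdot}_{\hdot},\alpha^{\hdot}_{\hdot})$ one has $\deg(V)=\deg^{\rm par}(E)$, where the degree on $Z$ means $\int_Z c_1(V)$, equivalently $\tfrac1n$ times the $C_n$-equivariant degree of $\tilde{q}^{\ast}V$ on $Y$. This is a local computation at each $t_i$ using the filtration $q_\ast(y^kL_Y\otimes\Oo_Y)^{C_n}$ of $E$ that describes the parabolic structure: each jump contributes a weight $\alpha^j_i=k/n$, and summing over $j$ and over $i$ produces exactly the correction term relating $\deg^{\rm par}(E)$ to $\deg(E)$. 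The $\lambda$-connection is irrelevant to this, and rank is obviously preserved since it is the rank of the local system.

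Next I would treat subobjects. A sub-$\lambda$-connection $(V',\nabla')\subset(V,\nabla)$ on $Z$ is in particular a subsheaf, and since $Z$ is tame (we are in characteristic zero) $p_\ast$ is exact, so $p_\ast(V'\otimes\Oo_Z)$ is a subsheaf of $E$ with induced $\lambda$-connection; replacing $V'$ by its saturation, which changes neither $\nabla'$ nor the slope computations, one gets a saturated sub-parabolic-bundle $E'\subset E$. A short local argument at each $t_i$, again through the $y^k$-filtration, identifies the parabolic structure on $E'$ transported from $Z$ with the one induced by intersecting the flag $P^{\hdot}_i$ with $E'_{t_i}$. Conversely the equivalence of Proposition \ref{lamcon} is fully faithful, so every $\nabla$-invariant saturated sub-parabolic-bundle of $(E,\nabla,P^{\hdot}_{\hdot})$ comes from a unique sub-$\lambda$-connection on $Z$. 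Together with the degree formula this matches subobjects, their ranks, and their parabolic degrees.

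Putting these together, the parabolic slope of any saturated $\nabla$-invariant sub-parabolic-bundle equals the slope of the corresponding sub-$\lambda$-connection on $Z$, and likewise for $E$ itself, so $\alpha$-semistability (resp.\ $\alpha$-stability) on $(X,D)$ is equivalent to (semi)stability on $Z$, equivalently to $C_n$-equivariant (semi)stability on $Y$. For the moduli statement I would then check that the root-stack construction, the finite flat pushforward $q_\ast$, and the functor $(-)^{C_n}$ all commute with arbitrary base change on the parameter scheme, so that Proposition \ref{lamcon} upgrades to an equivalence of the associated moduli groupoids, hence an isomorphism of Artin stacks, which by the stability comparison restricts to the open substacks of semistable objects and descends to the coarse moduli spaces. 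I expect the only genuine work to be the two local computations at the $t_i$ — the degree jump and the identification of the induced parabolic structure on a saturated subsheaf with its saturation inside $Z$ — together with tracking the sign conventions forced by the normalization $\alpha^j_i\in(-1,0]$ with $r^j_i=-\lambda\alpha^j_i$; the real obstacle is just to make sure these conventions make the two notions of degree agree exactly, rather than up to a sign or a shift, and all of this is essentially already in \cite{Borne} (with \cite{Biswas} for the curve case).
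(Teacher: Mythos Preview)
The paper states this lemma without proof, treating it as a known consequence of the parabolic/root-stack dictionary (with \cite{Borne} already cited for that equivalence). Your proposal supplies exactly the standard argument one would give to fill this in: the degree identity $\deg(V)=\deg^{\rm par}(E)$ via the local $y^k$-filtration, the correspondence of saturated $\nabla$-invariant subobjects under the exact functor $p_\ast$ (or $q_\ast(-)^{C_n}$), and the resulting match of slopes yielding the (semi)stability equivalence and the identification of moduli stacks in families. This is correct and is the expected proof; there is nothing to compare against on the paper's side beyond the implicit appeal to \cite{Borne}.
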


Suppose now that $(E,\nabla )$ is a logarithmic connection on $(X,D)$ whose
residues are non-resonant and have rational eigenvalues. Let $n$ be a common
denominator for the eigenvalues. By doing elementary
transformations we may assume that the eigenvalues of the residue lie in
$[0,1)\cap \frac{1}{n}\zz$. From the non-resonance condition, the decomposition
of $E_{t_i}$ into eigenspaces of dimension $1$ induces a
full-flag parabolic structure $P^{\hdot}_{i}$ at $t_i$, and the residues
of $\nabla$ determine the weights $\alpha ^j_i= r^j_i$. 
The degree $d=\deg (E)$ is determined by the Fuchs relation.
We get a point in $\Mm ^d_1({\bf r})$.

If we assume that the residues are nonspecial, then the parabolic weights are
also nonspecial, and our point is stable. We can take the limiting parabolic
Higgs bundle 
$$
\lim _{u\rightarrow 0}(E,u\nabla , P^{\hdot}_{\hdot})\in \Mm ^{d,\alpha}_0({\bf r})^{\Gm}.
$$
which will be stable too (hence unique up to translation of the $\Gm$-action,
see \cite{idsm}).

On the other hand, $(E,\nabla )$ has finite order monodromy so it corresponds to 
a $C_n$-equivariant vector bundle with connection $(E_Y,\nabla _Y)$ on $Y$.
The limit 
$$
\lim _{u\rightarrow 0}(E_Y,u\nabla _Y)
$$
is a $C_n$-equivariant $\Gm$-fixed Higgs bundle on $Y$. Similarly
these correspond to a vector bundle with connection $(E_Z,\nabla _Z)$ on the root stack $Z$
and again the limit $\lim _{u\rightarrow 0}(E_Z,u\nabla _Z)$ is a 
$\Gm$-fixed Higgs bundle on $Z$. 

\begin{lemma}
These three limits are the same via the correspondence of 
Proposition \ref{lamcon}. 
\end{lemma}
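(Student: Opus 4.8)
The plan is to deduce the lemma formally from the observation that the equivalence of Proposition~\ref{lamcon} is an equivalence of \emph{families} of parabolic $\lambda$-connections, and is compatible with the $\Gm$-action; then the operation $\lim_{u\to 0}$, being intrinsic to a $\Gm$-equivariant family over $\aaa^1$ together with separatedness of the target moduli space, is automatically carried along.

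First I would note that $u\mapsto (E,u\nabla ,P^{\hdot}_{\hdot})$ is literally a family of parabolic logarithmic $\lambda$-connections over $\aaa^1_u=\operatorname{Spec}\cc[u]$ with $\lambda =u$: from $u\nabla (ae)=a(u\nabla )(e)+u\,da\cdot e$ we see that $u\nabla$ is a $u$-connection, and with residue section $u{\bf r}$ this family is exactly the $\Gm$-orbit of $(E,\nabla ,P^{\hdot}_{\hdot})$ inside $\Mm ^{d,\alpha}(\lambda {\bf r})\to \aaa^1$. The same remark applies on $Y$ to $u\mapsto (E_Y,u\nabla _Y)$ and on $Z$ to $u\mapsto (E_Z,u\nabla _Z)$. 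Next I would check that the constructions underlying Propositions~\ref{categories}, \ref{logconnection} and~\ref{lamcon} --- taking $p_{\ast}$ of $L\otimes \Oo _Z$, equivalently $q_{\ast}(-)^{C_n}$ from $Y$, together with the parabolic filtration read off from the $y$-adic filtration of $L_Y\otimes \Oo _Y$ --- are functorial and commute with flat base change, hence apply verbatim to families parametrized by $\aaa^1_u$. Consequently the three displayed families correspond under the equivalence of Proposition~\ref{lamcon}, fibrewise for every $u$ and as families over $\aaa^1_u$.

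Second, the $\Gm$-action $u:(\lambda ,E,\nabla )\mapsto (u\lambda ,E,u\nabla )$ touches only the connection operator, which is intrinsic to all three descriptions, so it is matched by the equivalence; combined with Lemma~\ref{preservestab} this shows the induced isomorphisms of moduli stacks are $\Gm$-equivariant over the $\lambda$-line $\aaa^1$. Now $\lim_{u\to 0}$ of each family is, by Proposition~\ref{higgslimit} and its transport to $Y$ and $Z$, the value at $u=0$ of the unique extension of the punctured family across $u=0$ in the $\alpha$-stable Higgs moduli space, the uniqueness coming from separatedness of that space. Since the three punctured families are identified by the equivalence, and the extension depends only on the punctured family, the three limit points are identified by the isomorphisms of Proposition~\ref{lamcon} and Lemma~\ref{preservestab}; this is precisely the assertion.

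Third, the one point needing care --- and the only content beyond bookkeeping --- is the behaviour at $\lambda =0$. As remarked after Proposition~\ref{lamcon}, at $\lambda =0$ the connection no longer determines the parabolic weights, so in order to apply Lemma~\ref{preservestab} there one must declare the weights on the $Z$- and $Y$-sides to be the fixed nonspecial $\alpha$; with that normalization, stability corresponds and $\Gm$-fixed $\alpha$-stable objects go to $\Gm$-fixed $\alpha$-stable objects. One also observes that $\lim_{u\to 0}(E_Y,u\nabla _Y)$ is automatically $C_n$-equivariant, since the $C_n$-action is an automorphism of the punctured family and, by uniqueness of the limit, extends over $u=0$. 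I expect this $\lambda=0$ weight-matching, together with verifying that the direct-image constructions commute with the base change to $\aaa^1_u$ and respect the $\Gm$-actions, to be the main --- though essentially routine --- obstacle; the rest is formal.
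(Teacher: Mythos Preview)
The paper states this lemma without proof. Your argument is correct and is the natural one: the equivalence of Proposition~\ref{lamcon} is functorial, base-changes to families over $\aaa^1_u$, and is compatible both with stability (Lemma~\ref{preservestab}) and with the $\Gm$-action scaling $\nabla$; hence the three punctured families correspond, and uniqueness of the $\alpha$-stable limit at $u=0$ (Proposition~\ref{higgslimit} on $(X,D)$, the compact-curve case of \cite{idsm} on $Y$) forces the limits to correspond. Your identification of the two points requiring care --- that at $\lambda=0$ the weights must be imposed by hand as the fixed $\alpha$, and that $C_n$-equivariance of the $Y$-limit follows from uniqueness --- is exactly right.
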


The parabolic weights which should be used in order to maintain the correspondence
with bundles on the root stack $Z$ or $C_n$-equivariant bundles on $Y$,
are given by the residues of the connection. These are also given by the
local monodromy operators of the local system. 

Going back to the case of local systems of rank $2$, the parabolic weights
determined by the finite order local monodromy will sometimes be in the
unstable zone, and sometimes in the stable zone. This is the motivation for our
consideration of both zones in the previous discussion. From Corollaries 
\ref{foliationunstable} and \ref{foliationstable} we get the
foliation conjecture for most irreducible components of the
moduli of rank $2$ local systems on $Z$. 

\begin{corollary}
\label{orbicor}
The foliation conjecture of \cite{idsm} holds for the moduli of rank $2$
connections on the orbifold $Z$, at least in the connected components which
correspond to nonspecial local monodromy data.
\end{corollary}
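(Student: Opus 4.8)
The plan is to reduce Corollary~\ref{orbicor} to Corollaries~\ref{foliationunstable} and~\ref{foliationstable} through the dictionary of Section~\ref{sec-dm}. Fix a connected component of the moduli of rank $2$ connections on $Z$ corresponding to nonspecial local monodromy data, and let $n$ be a common denominator of the eigenvalues of the local monodromies. By Proposition~\ref{lamcon} together with Lemma~\ref{preservestab}, this component is isomorphic, as a moduli stack, to a moduli stack $\Mm^{d,\alpha}_1({\bf r})$ of $\alpha$-stable parabolic logarithmic connections on $(\pp^1,D)$ in which both the residues ${\bf r}$ and the weights $\alpha$ are read off from the local monodromy, the weights being $\alpha^j_i=-r^j_i$ as in Proposition~\ref{logconnection}. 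By the remarks following Proposition~\ref{logconnection}, nonspeciality of the monodromy data translates into nonspeciality of ${\bf r}$ and of $\alpha$; in particular the moduli space is smooth by Proposition~\ref{kgensmooth} carried across Lemma~\ref{preservestab}, so the foliation conjecture is a meaningful assertion here. Performing an elementary transformation if needed (Lemma~\ref{et}, which preserves nonspeciality and toggles the parity of the degree), we may further assume $d=1$, the setting of Sections~\ref{sec-unstablezone}--\ref{sec-stablezone}.

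The one genuine point to check is that a nonspecial $\alpha$ always lies strictly inside one of the regions of Proposition~\ref{unstablezones} --- the stable zone or one of the eight unstable zones --- i.e.\ never on a wall. With $\epsilon_i=(\alpha^+_i-\alpha^-_i)/2\in(0,\tfrac12)$, the walls are exactly the hyperplanes $\epsilon_1+\epsilon_2+\epsilon_3+\epsilon_4=\tfrac12$ and $\epsilon_1+\epsilon_2+\epsilon_3+\epsilon_4=\tfrac32$, together with $\epsilon_i+\epsilon_j-\epsilon_k-\epsilon_l=\tfrac12$ over the six renumberings of Proposition~\ref{unstablezones}(c). For $d=1$ the Kostov-genericity condition on the weights reads $\tfrac12+\sum_i\sigma_i\epsilon_i\notin\zz$ for every sign choice $\sigma_i\in\{+,-\}$: taking all $\sigma_i=+$ excludes $\epsilon_1+\epsilon_2+\epsilon_3+\epsilon_4\in\{\tfrac12,\tfrac32\}$, while taking two $\sigma_i$ equal to $+$ and two equal to $-$ excludes $\epsilon_i+\epsilon_j-\epsilon_k-\epsilon_l=\tfrac12$ for each such renumbering. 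Hence $\alpha$ avoids all eight walls and is interior to exactly one zone.

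If that zone is the stable zone I would apply Corollary~\ref{foliationstable}; otherwise, after reducing to the (a)-unstable zone by Lemma~\ref{etpermute}, I would apply Corollary~\ref{foliationunstable}. In either case the Higgs-limit decomposition of $M^{d,\alpha}_1({\bf r})$ is the decomposition into fibers of a smooth morphism to $\Pp$ --- the map $\Phi$ of Theorem~\ref{stabletheorem} or the map $\Upsilon$ of Proposition~\ref{iis-picture} --- hence a foliation. To carry this back to $Z$ I would invoke the preceding lemma, which asserts that under the correspondence of Proposition~\ref{lamcon} the Higgs limit computed on $Z$ (equivalently on the covering $Y$) agrees with the parabolic Higgs limit on $(\pp^1,D)$ taken with the weights coming from the monodromy. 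Combined with the isomorphism of moduli stacks of Lemma~\ref{preservestab}, this shows that the Higgs-limit decomposition on the chosen component of the moduli on $Z$ is again the fiber decomposition of a smooth morphism, hence a foliation.

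Thus the bulk of the argument is already contained in Sections~\ref{sec-unstablezone}--\ref{sec-stablezone}, and the load-bearing input in this section is the compatibility of Higgs limits with the root-stack correspondence, which rests on Lemma~\ref{preservestab}'s parabolic-degree bookkeeping and on the compatibility of the $\Gm$-action with Proposition~\ref{lamcon}. The only remaining work, sketched above, is the elementary verification that nonspecial weights avoid the walls between the zones.
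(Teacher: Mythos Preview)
Your proof is correct and follows essentially the same approach as the paper: reduce via the root-stack dictionary of Section~\ref{sec-dm} (Proposition~\ref{lamcon}, Lemma~\ref{preservestab}, and the compatibility-of-limits lemma) to the parabolic setting on $(\pp^1,D)$, and then invoke Corollaries~\ref{foliationunstable} and~\ref{foliationstable}. Your explicit verification that nonspecial weights never land on a wall between zones is a welcome addition that the paper leaves implicit.
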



\section{Transversality of the fibrations}
\label{sec-transverse}

Here, we compute the two fibrations defined in the (a)-unstable zone
by the map $\Upsilon$ (see Theorem \ref{unstabletheorem}) 
and in the stable zone by the map $\Phi$ (see Theorem \ref{stabletheorem}). 
We then prove, for Kostov-generic
local exponents ${\bf r}$, that the two fibrations are strongly transversal: generic fibers intersect at one point.
In the next section, we will see that the two fibrations are permuted by an Okamoto symmetry of the moduli space.
A similar description is presented at the end of the paper of Arinkin and Lysenko
in \cite{ArinkinLysenko2}.

Let us first recall the classical construction of canonical coordinates $(p,q)$
on the moduli space $\Mm _1^1({\bf r})$. After twisting by
a convenient logarithmic rank one connection (which has no effect on the 
construction of the two fibrations), we may assume that the local exponents 
are :
\begin{equation}\label{exponent}
(r_1^- , r_1^+,\ldots , r^-_4, r_4^+)=\left(\frac{\kappa_1}{2},-\frac{\kappa_1}{2},\ldots,\frac{\kappa_4}{2}-\frac{1}{2},-\frac{\kappa_4}{2}-\frac{1}{2}\right)
\end{equation}
(note that the last two exponents are shifted by $-\frac{1}{2}$ in order to get a degree $1$ bundle).
We also fix singular points $(t_1,t_2,t_3,t_4)=(0,1,t,\infty)$. For convenience,
denote by $\Mm _1^1(\boldsymbol{\kappa})$ the moduli space of such connections
where $\boldsymbol{\kappa}=(\kappa_1,\ldots,\kappa_4)\in\mathbb C^4$
satisfies Kostov-generic conditions :
\begin{itemize}
\item $\kappa_i\not\in\mathbb Z$ for $i=1,\ldots,4$,
\item $\pm\kappa_1+\cdots+\pm\kappa_4\not\in2\mathbb Z+1$ whatever the signs are.
\end{itemize}
A connection $(E,\nabla , P_{\hdot})\in \Mm _1^1(\boldsymbol{\kappa})$
is therefore irreducible (see Lemma \ref{kgeneric}) and defined on the bundle $E=\Oo \oplus  \Oo (1)$.
Such a connection may be described in the trivialization $\langle e,f\rangle$ used in section 3 
by 
$$
\nabla:Y\mapsto dY+\Omega Y
$$
where $Y=\begin{pmatrix}y_1\\ y_2\end{pmatrix}$ represents the section $y_1e+y_2 f$
and $\Omega=Adx$ is a $2\times 2$-matrix of logarithmic $1$-forms. Being logarithmic
at infinity means that $x(x-1)(x-t)A$ has polynomial coefficients of degree 
$\begin{pmatrix}2&1\\3&2\end{pmatrix}$.
The subbundle $\Oo(1)$ generated
by $f=\begin{pmatrix}0\\ 1\end{pmatrix}$ is not $\nabla$-invariant and the 
$(1,2)$-coefficient vanishes at a single point $x=q\in\pp^1$ (possibly $\infty$). 
This is the apparent singular point of the scalar equation with respect to the cyclic vector $\Oo(1)$: $q$ is the image of the map 
$\Upsilon$ of Theorem \ref{unstabletheorem} and we already get the first fibration.
Assume $q\not=\infty$. After gauge transformation of the form $\begin{pmatrix}\alpha^{-1}&0\\0& \alpha\end{pmatrix}$ we may assume:
\begin{equation}\label{eqA12}
A(1,2)=\frac{x-q}{x(x-1)(x-t)}.
\end{equation}
We can still use a gauge transformation of the form $\begin{pmatrix}1&0\\ \beta_1 x+\beta_0& 1\end{pmatrix}$
to further normalize the connection; this has the effect to add $-\frac{(\beta_1 x+\beta_0)(x-q)}{x(x-1)(x-t)}$ 
to the $(2,2)$-coefficient of $A$. In particular, the value $A(2,2)\vert_{x=q}$ is invariant under gauge freedom.
We set
$$
p:=A(2,2)\vert_{x=q}+\frac{\kappa_1}{2q}+\frac{\kappa_2}{2(q-1)}+\frac{\kappa_3}{2(q-t)}.
$$
More abstractly, at the point $q$ where the subbundle $\Oo (1)$ osculates to the
connection, we can compare the connection with a standard one on $\Oo (1)$
depending on $\boldsymbol{\kappa}$,
and $p$ is the difference. 
Using gauge freedom, we can finally assume
\begin{equation}\label{eqA22}
A(2,2)=p\frac{q(q-1)(q-t)}{x(x-1)(x-t)}-\frac{\kappa_1}{2x}-\frac{\kappa_2}{2(x-1)}-\frac{\kappa_3}{2(x-t)}.
\end{equation}
One can easily check that $A(1,1)+A(2,2)=0$ and that the last coefficient
$$
A(2,1)=\frac{c_1}{x}+\frac{c_2}{x-1}+\frac{c_3}{x-t}+c_4
$$
is determined by specifying the eigenvalues at the four poles.
A straightforward computation shows that the residual matrix $A_i$ at $t_i$ as well as
eigenvectors for $r_i^-$ and $r_i^+$ are respectively given by
$$A_1=\begin{pmatrix}-\frac{\tilde p}{t}+\frac{\kappa_1}{2}&-\frac{q}{t}\\ \frac{\tilde p(\tilde p-t\kappa_1)}{tq}&\frac{\tilde p}{t}-\frac{\kappa_1}{2}\end{pmatrix},\ \ \ 
\begin{pmatrix}1\\ -\frac{\tilde p}{q}\end{pmatrix}\ \ \ \text{and}\ \ \ 
\begin{pmatrix}1\\ -\frac{\tilde p-t\kappa_1}{q}\end{pmatrix}$$
$$A_2=\begin{pmatrix}\frac{\tilde p}{t-1}+\frac{\kappa_2}{2}&\frac{q-1}{t-1}\\ -\frac{\tilde p(\tilde p+(t-1)\kappa_2)}{(t-1)(q-1)}&-\frac{\tilde p}{t-1}-\frac{\kappa_2}{2}\end{pmatrix},\ \ \ 
\begin{pmatrix}1\\ -\frac{\tilde p}{q-1}\end{pmatrix}\ \ \ \text{and}\ \ \ 
\begin{pmatrix}1\\ -\frac{\tilde p+(t-1)\kappa_2}{q-1}\end{pmatrix}$$
$$A_3=\begin{pmatrix}-\frac{\tilde p}{t(t-1)}+\frac{\kappa_3}{2}&-\frac{q-t}{t(t-1)}\\ \frac{\tilde p(\tilde p-t(t-1)\kappa_3)}{t(t-1)(q-t)}&\frac{\tilde p}{t(t-1)}-\frac{\kappa_3}{2}\end{pmatrix},\ \ \ 
\begin{pmatrix}1\\ -\frac{\tilde p}{q-t}\end{pmatrix}\ \ \ \text{and}\ \ \ 
\begin{pmatrix}1\\ -\frac{\tilde p-t(t-1)\kappa_3}{q-t}\end{pmatrix}$$
$$A_4=\begin{pmatrix}\kappa_0+\frac{\kappa_4}{2}-\frac{1}{2}&-1\\ \kappa_0(\kappa_0+\kappa_4)&-\kappa_0-\frac{\kappa_4}{2}-\frac{1}{2}\end{pmatrix},\ \ \ 
\begin{pmatrix}1\\ \kappa_0\end{pmatrix}\ \ \ \text{and}\ \ \ 
\begin{pmatrix}1\\ \kappa_0+\kappa_4\end{pmatrix}$$
where $\tilde p$ and $\kappa_0$ are given by
$$2\kappa_0+\kappa_1+\kappa_2+\kappa_3+\kappa_4=1\ \ \ \text{and}\ \ \ \tilde p=q(q-1)(q-t)p.$$
Here, the residual matrix $A_4$ at $x=\infty$ is computed in the basis $\langle e,xf\rangle$.
The matrix connection $A$ is finally given by
$$A=\frac{A_1}{x}+\frac{A_2}{x-1}+\frac{A_3}{x-t}+C\ \ \ \text{with}\ \ \ C=\begin{pmatrix}0&0\\ -\kappa_0(\kappa_0+\kappa_4)&0\end{pmatrix}.$$
All these formulae make sense 
on the Zariski open subset of the moduli space $M _1^1(\boldsymbol{\kappa})$
defined by $(p,q)\in\mathbb C^2$ and $q\not=0,1,t$. This will be enough to compute and compare the two fibrations.
Note that this formula is simpler than the usual one from Jimbo-Miwa where the matrix at infinity is diagonalized.

In order to compute the $Q$ map defined by the parabolic structure, we consider 
the unique subbundle $\varphi:\Oo(-1)\hookrightarrow\Oo\oplus\Oo(1)$
that contains the parabolic directions over all $4$ points. This line bundle is the destabilizing bundle for the (b)-zone (see section
\ref{sec-HiggsLimit}). That line bundle also
provides the conic discussed in section \ref{sec-ParabolicStructures}, and the unique zero
of the first component of $\varphi$ will coincide with the parameter $Q$ of the moduli
space of parabolic bundles (see discussion following Lemma \ref{simple} on page \pageref{conicpage}). If we denote by $\begin{pmatrix}1\\ u_i\end{pmatrix}$
a generator for the parabolic $P_i$, we find that
the line bundle is generated by the section
$$\tiny{ Y=\begin{pmatrix}((t-1)u_1-tu_2+u_3)x+t(u_2-u_3+(t-1)u_4)\\ \\ \\
((t-1)u_1-tu_2+u_3)u_4x^2\hfill\\
\hskip1cm -(u_1u_2-tu_1u_3+(t-1)u_2u_3+(t^2-1)u_1u_4-t^2u_2u_4+u_3u_4)x\hskip1cm\\
\hfill+tu_1(u_2-u_3+(t-1)u_4)\end{pmatrix} }$$
and we get
$$
Q=-\frac{t(u_2-u_3+(t-1)u_4)}{(t-1)u_1-tu_2+u_3}.
$$
After substituting the values of the parabolics computed from the matrix connection above, we finally obtain
\begin{equation}\label{eq:okamoto}
Q=q+\frac{\kappa_0}{p} .
\end{equation}
In Section \ref{sec-okamoto}, 
we can see that this transformation (\ref{eq:okamoto}) is nothing but the extra Okamoto involution $s_0$ (\ref{eq:ok-inv}).  

Clearly, the $q$-fibration and $Q$-fibration are strongly transversal whenever $\kappa_0\not=1$, which is implied by Kostov-genericity condition. More precisely, although we worked out computations so far on a Zariski open subset of the moduli space, a complete description of it will be given in the next section; it will follow that:
the intersection number $F \cdot L $ of general fibers, $F$ of the $q$-fibration and $L$ of the $Q$-fibration, is one. 
Exceptions are:
\begin{itemize}
\item For general $\lambda\in\pp^1$, fibers $q=\lambda$ and $Q=\lambda$ do not intersect; they do at the infinity in the compactification of the moduli space.
\item For $\lambda=t_i$ one of the poles, fibers split as $F=F^+\sqcup F^-$ and $L=L^+\sqcup L^-$
and have a common component.
\end{itemize}
This will be clarified in the next section.

\begin{theorem}
\label{transverse}
For any $\boldsymbol{\kappa}$ satisfying the Kostov-genericity condition,
the two fibrations defined by $\Phi$ and $\Upsilon$ are strongly transversal. 
\end{theorem}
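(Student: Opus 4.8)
The plan is to reduce the statement to the explicit coordinate identity \eqref{eq:okamoto}, $Q = q + \kappa_0/p$, valid on the Zariski-dense chart $V := \{(p,q)\in\mathbb C^2 : q\neq 0,1,t\}$ of $M_1^1(\boldsymbol\kappa)$, and then to control the boundary $M_1^1(\boldsymbol\kappa)\setminus V$ by means of the surface model of Proposition \ref{iis-picture} together with its $Q$-analogue from Section \ref{sec-okamoto}. The first thing I would record is that $\kappa_0\neq 0$: since $2\kappa_0 = 1-(\kappa_1+\kappa_2+\kappa_3+\kappa_4)$, the all-$+$ instance of the Kostov-genericity condition $\pm\kappa_1\pm\cdots\pm\kappa_4\notin 2\zz+1$ forces $\kappa_1+\cdots+\kappa_4\notin 2\zz+1$, hence $\kappa_0\notin\zz$ and in particular $\kappa_0\notin\{0,1\}$.

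Next I would fix generic values $\lambda,\mu\in\mathbb C\setminus\{0,1,t\}$ with $\lambda\neq\mu$ and work inside $V$, where $\Upsilon$ of Theorem \ref{unstabletheorem} is $(p,q)\mapsto q$ and $\Phi$ of Theorem \ref{stabletheorem} is $(p,q)\mapsto Q$ (for a generic target value the non-separatedness of $\Pp$ is invisible, and the fibres are irreducible by Proposition \ref{iis-picture} and Theorem \ref{stabletheorem}). Then the $\Upsilon$-fibre $F$ meets $V$ in the affine $p$-line $\{q=\lambda\}$, and the $\Phi$-fibre $L$ meets $V$ in $\{q+\kappa_0/p=\mu\}$; solving, these meet in the single point $(p,q)=(\kappa_0/(\mu-\lambda),\lambda)$. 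I would then check that this intersection is transverse: the tangent to $F$ is $\partial_p$, whereas $dQ = dq - \kappa_0 p^{-2}\,dp = 0$ along $L$ shows its tangent is $\partial_p + \kappa_0 p^{-2}\,\partial_q$, which is independent of $\partial_p$ precisely because $\kappa_0\neq 0$. The same computation exhibits $p\mapsto\lambda+\kappa_0/p$ as a bijection from the $p$-line onto $\pp^1\setminus\{\lambda\}\subset\Pp$, so $F$ and $L$ cannot meet more than once inside $V$.

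The only substantial remaining step is to show that the closures of $F$ and $L$ acquire no further intersection point on $M_1^1(\boldsymbol\kappa)\setminus V$, i.e.\ over $q\in\{0,1,t,\infty\}$ and on the two exceptional curves above each $t_i$. Here I would invoke the explicit picture of both fibrations developed in Section \ref{sec-okamoto}: $\Upsilon$ and $\Phi$ realize $M_1^1(\boldsymbol\kappa)$ as two rulings of one open subset of a common blow-up $\overline M$ of a Hirzebruch surface, interchanged by the extra Okamoto involution $s_0$ of \eqref{eq:ok-inv}. In $\overline M$ the closures $\overline F$, $\overline L$ of generic fibres are each a $\pp^1$, obtained from the affine fibre by adjoining one point on the section at infinity of the corresponding ruling; since those two sections are distinct curves in $\overline M$ — which one checks directly from the description in Section \ref{sec-okamoto} — their points over generic $\lambda$, resp.\ $\mu$, differ, so that $\overline F\cdot\overline L = 1$. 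As this value is already realized transversally inside $V$, there can be no other intersection, and $F\cap L$ is a single transverse point: the two fibrations are strongly transversal. For the finitely many excluded values one recovers the exceptions announced before the statement — $\lambda=\mu$ gives fibres meeting only at infinity in $\overline M$, and $\lambda$ or $\mu$ equal to some $t_i$ gives reducible fibres $F=F^+\sqcup F^-$, $L=L^+\sqcup L^-$ which share a component.

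I expect the main obstacle to lie entirely in the boundary bookkeeping of the preceding paragraph: tracing the two rulings and their sections at infinity through the successive blow-ups over $t_1,\dots,t_4$, so as to confirm both that $\overline F\cdot\overline L=1$ and that nothing is hidden over the doubled points. This is exactly the geometric content that Section \ref{sec-okamoto} is meant to supply; by contrast, the chart identity \eqref{eq:okamoto} and the local transversality check are routine once the canonical coordinates are in place.
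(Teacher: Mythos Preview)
Your proposal is correct and follows essentially the paper's own route: reduce to the chart identity \eqref{eq:okamoto} for the generic affine intersection, then invoke the compactified surface geometry of Section~\ref{sec-okamoto} to rule out extra boundary intersections. The paper states the theorem immediately after \eqref{eq:okamoto} and explicitly defers the global statement to the next section, where Proposition~\ref{prop:geometry} completes the argument.

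One stylistic difference worth noting: for the boundary step you argue geometrically that the two ``sections at infinity'' of the respective rulings are distinct curves in $\overline{M_1^1(\boldsymbol\kappa)}$, whence the compactified fibres $\overline F$, $\overline L$ meet only at the affine point already found. The paper instead handles this via pure intersection theory in ${\rm Pic}(S_{\boldsymbol\kappa})$: it identifies the divisor class of a general $\Phi$-fibre as $L\sim C_1+F-E_1^--E_2^--E_3^--E_4^-$ (equation~\eqref{eq:parafiber}) and then reads off $F\cdot L=1$ directly from the known pairings $C_1\cdot F=1$, $F^2=0$, $F\cdot E_i^-=0$. This Picard-group computation is cleaner and sidesteps the need to trace the two rulings through the blow-ups by hand; it also immediately gives the reducible-fibre relations \eqref{eq:reducible} and \eqref{fiberation2} that explain the exceptional behaviour over $t_i$. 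Your ``distinct sections'' argument is morally the same thing (the section at infinity for $\pi_{1,\boldsymbol\kappa}$ is $C_0$, with $L\cdot C_0=1$, while a general $F$ misses $C_0$ only because $F\cdot C_0=0$ forces its boundary point elsewhere), but the divisor-class bookkeeping is the more efficient way to package it.
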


We end the section by an alternate description of the connection, this time with parameters $p$ and $Q$.
The idea is to normalize first the parabolic structure as $(u_1,u_2,u_3,u_4)=(0,1,u,0)$
with $u=t\frac{Q-1}{Q-t}$ (assuming $P_1$, $P_2$ and $P_4$ not on the same $\mathcal O\subset B$)
which fix the gauge freedom
and next use $p$ as a parameter: $\nabla=\nabla_0+p\cdot\Theta$
where $\nabla_0$ is the unique connection with $p=0$ and $\Theta$ a Higgs field. We find
$\nabla_0=d+\left(\frac{A_1}{x}+\frac{A_2}{x-1}+\frac{A_3}{x-t}\right)dx$
with
$$A_1=\kappa_0\frac{Q-t}{t}\begin{pmatrix}0&1\\ 0&0\end{pmatrix}+\frac{\kappa_1}{2}\begin{pmatrix}1&0\\ 0&-1\end{pmatrix}$$
$$A_2=-\kappa_0\frac{Q-t}{t-1}\begin{pmatrix}1&1\\ -1&-1\end{pmatrix}+\frac{\kappa_2}{2}\begin{pmatrix}1&0\\ -2&-1\end{pmatrix}$$
$$A_3=\kappa_0\frac{Q-t}{t(t-1)}\begin{pmatrix}u&1\\ -u^2&-u\end{pmatrix}+\frac{\kappa_3}{2}\begin{pmatrix}1&0\\ -2u&-1\end{pmatrix}$$
and $\Theta=\left(\frac{\Theta_1}{x}+\frac{\Theta_2}{x-1}+\frac{\Theta_3}{x-t}\right)dx$ with
$$\Theta_1=-\frac{Q(Q-t)}{t}\begin{pmatrix}0&1\\ 0&0\end{pmatrix}$$
$$\Theta_2=\frac{(Q-1)(Q-t)}{t-1}\begin{pmatrix}1&1\\ -1&-1\end{pmatrix}$$
$$\Theta_3=-\frac{(Q-t)^2}{t(t-1)}\begin{pmatrix}u&1\\ -u^2&-u\end{pmatrix}$$
As we can check, $\det(A_i)=-\frac{\kappa_i^2}{4}$, $A_4=-A_1-A_2-A_3=\begin{pmatrix}\frac{1-\kappa_4}{2}&0\\ *&\frac{\kappa_4-1}{2}\end{pmatrix}$, $A(1,2)=p(Q-t)\frac{x-q}{x(x-1)(x-t)}$ and $A(2,2)\vert_{x=q}=p$.


\section{Okamoto symmetries}
\label{sec-okamoto}

In the article \cite{Okamoto3}, Okamoto constructs a group of birational
transformations of the moduli space, generated by elementary
transformations, permutation of poles $t_i$,
and a rather mysterious extra involution denoted $s_0$
in what follows. This group is described in many papers.
Here we follow notations of \cite{InabaIwasakiSaito,InabaIwasakiSaito2} but also use 
the presentation of Noumi-Yamada \cite{NoumiYamada}
for relators.

In order to describe Okamoto symmetries more geometrically, 
recall first the geometry of the moduli space $M_1^1(\boldsymbol{\kappa})$ and its 
natural compactification $\overline{M_1^1(\boldsymbol{\kappa})}$ (\cite{InabaIwasakiSaito2}).

In Theorem 4.1 in \cite{InabaIwasakiSaito2} (which we have already mentioned in
Proposition \ref{iis-picture} above), moduli spaces 
$\overline{M_1^1(\boldsymbol{\kappa})}$ of $\boldsymbol{\alpha}$-stable 
parabolic $\phi$-connections were constructed as follows.
We fix the weight $\boldsymbol{\alpha}$ as in \cite[Theorem 4.1]{InabaIwasakiSaito2}, 
but for simplicity we will not specify them for a while. 
Let us consider the Hirzebruch surface of degree 2 which is  
the $\pp^1$-bundle over $\pp^1$ 
$$
\pi:\Sigma_2 = \pp( \Omega^1_{\pp^1}(D) \oplus \Oo_{\pp^1}) = \pp (\Oo_{\pp^1}(2) 
\oplus \Oo_{\pp^1}) \longrightarrow \pp^1.  
$$
Let $C_0$ be the unique section of $\pi:\Sigma_2 \longrightarrow \pp^1$ with 
the self-intersection number $(C_0)^2 = -2$ 
and $F$ the class of  a general fiber of $\pi$. 
Moreover we have another class of a section $C_1$ of $\pi$ 
with the condition $C_1 \cdot C_0 = 0$. We see that $C_1 \sim C_0 + 2 F$ 
where $\sim$ means the linear equivalence of divisors. 

We fixed four distinct points $t_1,t_2, t_3, t_4$ in $\pp^1$ and 
consider the fibers $F_i = \pi^{-1}(t_i)$, $1 \leq i \leq 4$ .  
Since the data ${\bf r}=\{r_i^{\pm}\} $ are given by 
$\boldsymbol{\kappa} = \{ \kappa_i \}$ 
as in (\ref{exponent}) which are nonspecial, we can define two different points 
$b_i^{\pm}$ in each fiber $F_{i}$ as follows.

Let $e$ be the unit section of $\Oo$, and $f$ be the unit
section of $\Oo(2)$ vanishing twice at $\infty$. denote by 
$q$ the projective variable of $\pp^1$; a point of $\Sigma_2$
over $q\not=\infty$ is given by $e(q)+\tilde p f(q)$, thus 
characterized by $\tilde p\in\pp^1$. In the affine chart $(q,\tilde p)$,
we set
$$
\left\{\begin{matrix}
b_1^-&=&(0,0)\hfill\\ b_1^+&=&(0,t\kappa_1)
\end{matrix}\right.\ \ \ 
\left\{\begin{matrix}
b_2^-&=&(1,0)\hfill\\ b_2^+&=&(1,(1-t)\kappa_2)
\end{matrix}\right.\ \ \ 
\left\{\begin{matrix}
b_3^-&=&(t,0)\hfill\\ b_3^+&=&(t,t(t-1)\kappa_3)
\end{matrix}\right.
$$
Now, let $g$ be the unit section of $\Oo(2)$ vanishing twice at $0$:
$e(q)+\tilde p f(q)=e(q)+\tilde p_\infty g(q)$ where $\tilde p_\infty=\frac{\tilde p}{q^2}$ whenever $q\not=0,\infty$.
In coordinates $(q,\tilde p_\infty)$, we set
$$
\left\{\begin{matrix}
b_4^-&=&(\infty,-\kappa_0)\hfill\\ b_4^+&=&(\infty,-\kappa_0-\kappa_4)
\end{matrix}\right.
$$
(see Figure \ref{fig:hirzebruch}).

\begin{figure}[h]
\begin{center}
\unitlength 0.1in
\begin{picture}(40.55,27.87)(6.30,-28.37)
%
\special{pn 20}%
\special{pa 638 374}%
\special{pa 4194 374}%
\special{fp}%
%
\special{pn 20}%
\special{pa 1116 50}%
\special{pa 1116 2480}%
\special{fp}%
%
\special{pn 20}%
\special{pa 1764 58}%
\special{pa 1764 2488}%
\special{fp}%
%
\special{pn 20}%
\special{pa 2744 50}%
\special{pa 2744 2480}%
\special{fp}%
%
\special{pn 20}%
\special{pa 3554 58}%
\special{pa 3554 2488}%
\special{fp}%
%
\special{pn 8}%
\special{pa 638 2812}%
\special{pa 4194 2812}%
\special{fp}%
%
\special{pn 8}%
\special{pa 4680 374}%
\special{pa 4680 2488}%
\special{fp}%
\special{sh 1}%
\special{pa 4680 2488}%
\special{pa 4700 2421}%
\special{pa 4680 2435}%
\special{pa 4660 2421}%
\special{pa 4680 2488}%
\special{fp}%
%
\special{pn 20}%
\special{sh 1}%
\special{ar 1764 2820 10 10 0  6.28318530717959E+0000}%
\special{sh 1}%
\special{ar 1772 2812 10 10 0  6.28318530717959E+0000}%
%
\special{pn 20}%
\special{sh 1}%
\special{ar 2736 2812 10 10 0  6.28318530717959E+0000}%
\special{sh 1}%
\special{ar 2744 2804 10 10 0  6.28318530717959E+0000}%
%
\special{pn 20}%
\special{sh 1}%
\special{ar 3546 2812 10 10 0  6.28318530717959E+0000}%
\special{sh 1}%
\special{ar 3554 2804 10 10 0  6.28318530717959E+0000}%
%
\special{pn 20}%
\special{sh 1}%
\special{ar 1116 2820 10 10 0  6.28318530717959E+0000}%
\special{sh 1}%
\special{ar 1124 2812 10 10 0  6.28318530717959E+0000}%
\put(21.9300,-29.9000){\makebox(0,0)[lb]{q}}%
\put(10.7600,-30.0700){\makebox(0,0)[lb]{$t_1$}}%
\put(45.8300,-28.4500){\makebox(0,0)[lb]{{\Large $\pp^1$}}}%
%
\special{pn 8}%
\special{pa 638 50}%
\special{pa 4194 50}%
\special{pa 4194 2488}%
\special{pa 638 2488}%
\special{pa 638 50}%
\special{fp}%
%
\special{pn 13}%
\special{pa 638 1889}%
\special{pa 672 1882}%
\special{pa 706 1875}%
\special{pa 740 1869}%
\special{pa 773 1862}%
\special{pa 807 1855}%
\special{pa 841 1848}%
\special{pa 874 1841}%
\special{pa 908 1834}%
\special{pa 941 1826}%
\special{pa 975 1819}%
\special{pa 1008 1812}%
\special{pa 1041 1804}%
\special{pa 1074 1796}%
\special{pa 1107 1788}%
\special{pa 1140 1780}%
\special{pa 1172 1772}%
\special{pa 1204 1763}%
\special{pa 1237 1755}%
\special{pa 1268 1746}%
\special{pa 1300 1737}%
\special{pa 1332 1727}%
\special{pa 1363 1717}%
\special{pa 1394 1707}%
\special{pa 1425 1697}%
\special{pa 1455 1687}%
\special{pa 1485 1676}%
\special{pa 1515 1664}%
\special{pa 1545 1653}%
\special{pa 1574 1641}%
\special{pa 1603 1629}%
\special{pa 1632 1616}%
\special{pa 1660 1603}%
\special{pa 1688 1589}%
\special{pa 1715 1575}%
\special{pa 1742 1561}%
\special{pa 1769 1546}%
\special{pa 1795 1531}%
\special{pa 1821 1515}%
\special{pa 1847 1499}%
\special{pa 1872 1483}%
\special{pa 1896 1465}%
\special{pa 1921 1448}%
\special{pa 1944 1430}%
\special{pa 1967 1411}%
\special{pa 1990 1391}%
\special{pa 2012 1372}%
\special{pa 2034 1351}%
\special{pa 2055 1330}%
\special{pa 2076 1309}%
\special{pa 2096 1286}%
\special{pa 2116 1264}%
\special{pa 2135 1241}%
\special{pa 2154 1217}%
\special{pa 2172 1193}%
\special{pa 2190 1169}%
\special{pa 2208 1144}%
\special{pa 2225 1118}%
\special{pa 2242 1092}%
\special{pa 2259 1066}%
\special{pa 2275 1040}%
\special{pa 2291 1012}%
\special{pa 2306 985}%
\special{pa 2321 957}%
\special{pa 2336 929}%
\special{pa 2351 901}%
\special{pa 2365 872}%
\special{pa 2379 843}%
\special{pa 2393 813}%
\special{pa 2406 784}%
\special{pa 2419 754}%
\special{pa 2432 723}%
\special{pa 2445 693}%
\special{pa 2457 662}%
\special{pa 2470 631}%
\special{pa 2482 600}%
\special{pa 2494 569}%
\special{pa 2505 537}%
\special{pa 2517 505}%
\special{pa 2528 473}%
\special{pa 2540 441}%
\special{pa 2551 409}%
\special{pa 2562 377}%
\special{pa 2573 344}%
\special{pa 2584 312}%
\special{pa 2594 279}%
\special{pa 2605 246}%
\special{pa 2616 213}%
\special{pa 2626 181}%
\special{pa 2637 148}%
\special{pa 2647 115}%
\special{pa 2658 82}%
\special{pa 2663 66}%
\special{sp}%
%
\special{pn 13}%
\special{pa 2574 2480}%
\special{pa 2576 2442}%
\special{pa 2578 2403}%
\special{pa 2580 2365}%
\special{pa 2582 2327}%
\special{pa 2584 2289}%
\special{pa 2586 2251}%
\special{pa 2589 2214}%
\special{pa 2592 2177}%
\special{pa 2595 2141}%
\special{pa 2598 2105}%
\special{pa 2602 2070}%
\special{pa 2607 2036}%
\special{pa 2611 2002}%
\special{pa 2617 1969}%
\special{pa 2623 1937}%
\special{pa 2629 1906}%
\special{pa 2636 1876}%
\special{pa 2644 1847}%
\special{pa 2652 1819}%
\special{pa 2661 1793}%
\special{pa 2671 1767}%
\special{pa 2682 1743}%
\special{pa 2693 1720}%
\special{pa 2706 1699}%
\special{pa 2719 1679}%
\special{pa 2733 1660}%
\special{pa 2749 1644}%
\special{pa 2765 1628}%
\special{pa 2782 1615}%
\special{pa 2800 1603}%
\special{pa 2820 1592}%
\special{pa 2840 1583}%
\special{pa 2861 1575}%
\special{pa 2882 1569}%
\special{pa 2905 1564}%
\special{pa 2928 1560}%
\special{pa 2953 1558}%
\special{pa 2978 1556}%
\special{pa 3004 1556}%
\special{pa 3030 1558}%
\special{pa 3057 1560}%
\special{pa 3085 1564}%
\special{pa 3114 1568}%
\special{pa 3143 1574}%
\special{pa 3173 1580}%
\special{pa 3204 1588}%
\special{pa 3235 1597}%
\special{pa 3267 1606}%
\special{pa 3299 1616}%
\special{pa 3331 1628}%
\special{pa 3365 1640}%
\special{pa 3398 1652}%
\special{pa 3433 1666}%
\special{pa 3467 1680}%
\special{pa 3502 1695}%
\special{pa 3538 1710}%
\special{pa 3573 1727}%
\special{pa 3610 1743}%
\special{pa 3646 1760}%
\special{pa 3683 1778}%
\special{pa 3720 1796}%
\special{pa 3757 1815}%
\special{pa 3795 1834}%
\special{pa 3833 1853}%
\special{pa 3871 1873}%
\special{pa 3909 1893}%
\special{pa 3947 1913}%
\special{pa 3986 1933}%
\special{pa 4024 1954}%
\special{pa 4063 1974}%
\special{pa 4102 1995}%
\special{pa 4141 2016}%
\special{pa 4179 2037}%
\special{pa 4194 2045}%
\special{sp}%
\put(16.9100,-29.9800){\makebox(0,0)[lb]{$t_2$}}%
\put(26.8700,-29.9000){\makebox(0,0)[lb]{$t_3$}}%
\put(34.9700,-29.9000){\makebox(0,0)[lb]{$t_4$}}%
\put(6.8700,-13.1400){\makebox(0,0)[lb]{$\tilde{p}$}}%
%
\special{pn 20}%
\special{sh 1}%
\special{ar 1116 690 10 10 0  6.28318530717959E+0000}%
\special{sh 1}%
\special{ar 1116 690 10 10 0  6.28318530717959E+0000}%
%
\special{pn 20}%
\special{sh 1}%
\special{ar 1764 690 10 10 0  6.28318530717959E+0000}%
\special{sh 1}%
\special{ar 1764 690 10 10 0  6.28318530717959E+0000}%
%
\special{pn 20}%
\special{sh 1}%
\special{ar 2744 982 10 10 0  6.28318530717959E+0000}%
\special{sh 1}%
\special{ar 2744 982 10 10 0  6.28318530717959E+0000}%
%
\special{pn 20}%
\special{sh 1}%
\special{ar 3554 625 10 10 0  6.28318530717959E+0000}%
\special{sh 1}%
\special{ar 3554 625 10 10 0  6.28318530717959E+0000}%
\put(9.5400,-6.9800){\makebox(0,0)[lb]{$b^+_1$}}%
\put(13.9100,-3.4200){\makebox(0,0)[lb]{$C_0$}}%
\put(18.2100,-6.8200){\makebox(0,0)[lb]{$b^+_2$}}%
\put(28.1700,-9.5700){\makebox(0,0)[lb]{$b^+_3$}}%
\put(36.4300,-5.7700){\makebox(0,0)[lb]{$b^+_4$}}%
\put(46.0000,-3.0000){\makebox(0,0)[lb]{{\Large $\Sigma_2$}}}%
\put(11.4800,-14.3500){\makebox(0,0)[lb]{$(b')^-_1$}}%
\put(15.3700,-17.5100){\makebox(0,0)[lb]{$b^-_2$}}%
\put(20.4000,-15.5000){\makebox(0,0)[lb]{$C_{234}$}}%
\put(25.0000,-18.0000){\makebox(0,0)[lb]{$b^-_3$}}%
\put(32.7900,-20.7500){\makebox(0,0)[lb]{$b^-_4$}}%
\put(11.8100,-24.4000){\makebox(0,0)[lb]{$F_1$}}%
\put(17.9600,-24.4000){\makebox(0,0)[lb]{$F_2$}}%
\put(28.0100,-24.4000){\makebox(0,0)[lb]{$F_3$}}%
\put(36.2700,-24.3100){\makebox(0,0)[lb]{$F_4$}}%
%
\special{pn 8}%
\special{pa 646 1435}%
\special{pa 678 1439}%
\special{pa 710 1443}%
\special{pa 742 1446}%
\special{pa 774 1450}%
\special{pa 806 1454}%
\special{pa 837 1458}%
\special{pa 869 1461}%
\special{pa 901 1465}%
\special{pa 933 1469}%
\special{pa 965 1472}%
\special{pa 997 1476}%
\special{pa 1029 1479}%
\special{pa 1061 1483}%
\special{pa 1093 1487}%
\special{pa 1124 1490}%
\special{pa 1156 1494}%
\special{pa 1188 1497}%
\special{pa 1220 1500}%
\special{pa 1252 1504}%
\special{pa 1284 1507}%
\special{pa 1316 1510}%
\special{pa 1347 1513}%
\special{pa 1379 1517}%
\special{pa 1411 1520}%
\special{pa 1443 1523}%
\special{pa 1475 1526}%
\special{pa 1506 1528}%
\special{pa 1538 1531}%
\special{pa 1570 1534}%
\special{pa 1602 1537}%
\special{pa 1633 1539}%
\special{pa 1665 1542}%
\special{pa 1697 1544}%
\special{pa 1729 1546}%
\special{pa 1760 1549}%
\special{pa 1792 1551}%
\special{pa 1824 1553}%
\special{pa 1855 1555}%
\special{pa 1887 1557}%
\special{pa 1919 1559}%
\special{pa 1950 1561}%
\special{pa 1982 1563}%
\special{pa 2014 1565}%
\special{pa 2045 1567}%
\special{pa 2077 1569}%
\special{pa 2108 1571}%
\special{pa 2140 1574}%
\special{pa 2172 1576}%
\special{pa 2204 1578}%
\special{pa 2235 1581}%
\special{pa 2267 1584}%
\special{pa 2299 1587}%
\special{pa 2331 1590}%
\special{pa 2362 1593}%
\special{pa 2394 1597}%
\special{pa 2426 1601}%
\special{pa 2458 1605}%
\special{pa 2490 1609}%
\special{pa 2522 1614}%
\special{pa 2554 1619}%
\special{pa 2586 1624}%
\special{pa 2618 1630}%
\special{pa 2650 1636}%
\special{pa 2682 1642}%
\special{pa 2714 1649}%
\special{pa 2747 1656}%
\special{pa 2779 1664}%
\special{pa 2811 1672}%
\special{pa 2844 1680}%
\special{pa 2876 1688}%
\special{pa 2908 1697}%
\special{pa 2941 1705}%
\special{pa 2973 1713}%
\special{pa 3005 1721}%
\special{pa 3037 1729}%
\special{pa 3069 1736}%
\special{pa 3101 1743}%
\special{pa 3133 1749}%
\special{pa 3165 1755}%
\special{pa 3196 1760}%
\special{pa 3227 1765}%
\special{pa 3259 1768}%
\special{pa 3290 1771}%
\special{pa 3320 1772}%
\special{pa 3351 1772}%
\special{pa 3381 1772}%
\special{pa 3411 1769}%
\special{pa 3441 1766}%
\special{pa 3470 1761}%
\special{pa 3499 1755}%
\special{pa 3528 1747}%
\special{pa 3557 1738}%
\special{pa 3585 1728}%
\special{pa 3614 1717}%
\special{pa 3642 1705}%
\special{pa 3669 1692}%
\special{pa 3697 1677}%
\special{pa 3724 1662}%
\special{pa 3751 1646}%
\special{pa 3778 1629}%
\special{pa 3805 1611}%
\special{pa 3832 1592}%
\special{pa 3858 1573}%
\special{pa 3885 1553}%
\special{pa 3911 1532}%
\special{pa 3937 1511}%
\special{pa 3963 1489}%
\special{pa 3989 1467}%
\special{pa 4015 1445}%
\special{pa 4041 1422}%
\special{pa 4067 1398}%
\special{pa 4093 1375}%
\special{pa 4118 1351}%
\special{pa 4144 1328}%
\special{pa 4170 1304}%
\special{pa 4194 1281}%
\special{sp 0.070}%
\put(18.3700,-10.4600){\makebox(0,0)[lb]{$C_1+F$}}%
\put(9.4600,-19.0500){\makebox(0,0)[lb]{$b^-_1$}}%
\put(24.3000,-3.2000){\makebox(0,0)[lb]{$Q$}}%
%
\special{pn 20}%
\special{sh 1}%
\special{ar 1108 1492 10 10 0  6.28318530717959E+0000}%
\special{sh 1}%
\special{ar 1116 1492 10 10 0  6.28318530717959E+0000}%
%
\special{pn 20}%
\special{sh 1}%
\special{ar 2558 358 10 10 0  6.28318530717959E+0000}%
\special{sh 1}%
\special{ar 2558 358 10 10 0  6.28318530717959E+0000}%
\put(30.2000,-15.1000){\makebox(0,0)[lb]{$C_1+F$}}%
%
\special{pn 8}%
\special{pa 646 2480}%
\special{pa 638 860}%
\special{fp}%
\special{sh 1}%
\special{pa 638 860}%
\special{pa 618 927}%
\special{pa 638 913}%
\special{pa 658 927}%
\special{pa 638 860}%
\special{fp}%
%
\special{pn 8}%
\special{pa 630 2812}%
\special{pa 2201 2804}%
\special{fp}%
\special{sh 1}%
\special{pa 2201 2804}%
\special{pa 2134 2784}%
\special{pa 2148 2804}%
\special{pa 2134 2824}%
\special{pa 2201 2804}%
\special{fp}%
%
\special{pn 13}%
\special{pa 2390 60}%
\special{pa 2380 2490}%
\special{fp}%
\put(22.2000,-12.8000){\makebox(0,0)[lb]{$F$}}%
\end{picture}%

\end{center}
\caption{Hirzebruch surface $\Sigma_2$}
\label{fig:hirzebruch}
\end{figure}

Blowing up these $8$ points $ \{ b_i^{\pm} \}_{1 \leq i \leq 4} $  of $\Sigma_{2}$,  we obtain a morphism 
$$
f_{\boldsymbol{\kappa}}:S_{\boldsymbol{\kappa}} = \tilde{\Sigma}_{2, \boldsymbol{\kappa}} \longrightarrow \Sigma_2
$$ 
where $S_{\boldsymbol{\kappa}} $ is a smooth rational surface. 
We set $E_i^{\pm} = f_{\boldsymbol{\kappa}}^{-1}(b_i^{\pm})$ the exceptional curves of $f$, and we denote by
$F'_i $ the proper transform of $F_i$.  Then 
one can see that the Picard group of $S_{\boldsymbol{\kappa}}$ is 
generated by the classes of $C_0, F, E_{1}^{\pm}, \cdots, E_{4}^{\pm}$, and moreover 
the anti-canonical class $-K_{S_{\boldsymbol{\kappa}}}$ has a unique 
effective member 
\begin{equation}\label{anti}
 Y = 2 C_0 + F'_1+ F'_2+F'_3+F'_4. 
\end{equation}
The pair $(S_{\boldsymbol{\kappa}}, Y)$ is an 
Okamoto-Painlev\'e pair in the sense of 
\cite{SaitoTakebeTerajima} (see also \cite{Sakai}), which means that the rational surface $S_{\boldsymbol{\kappa}}$ has a rational two form $\omega$ (unique up to non-zero constants) whose pole divisor is given by $Y$ with the conditions $Y \cdot C_0 = Y \cdot F'_i = 0$ for $1 \leq i \leq 4$.  Precisely, we have 
$$
\omega=dp\wedge dq\ \ \ \text{with}\ \tilde p=q(q-1)(q-t)p.
$$
Note that the complement $S_{\boldsymbol{\kappa}} \setminus Y$ has a holomorphic symplectic structure induced by $\omega$.  
Then in \cite{InabaIwasakiSaito2}, we have the following isomorphisms
$$
\begin{array}{ccc}
\overline{M_1^1(\boldsymbol{\kappa})} & \simeq  & S_{\boldsymbol{\kappa}}   \\
\cup &   & \cup \\
M_1^1(\boldsymbol{\kappa}) &  \simeq &  S_{\boldsymbol{\kappa}} \setminus Y.  \\
\end{array}
$$
The apparent singularity map 
$\Upsilon:\Mm^1_1(\boldsymbol{\kappa})\rightarrow \Pp $ in Section \ref{sec-unstablezone} 
induces a morphism 
$$
M^1_1 (\boldsymbol{\kappa} ) \rightarrow \Pp \rightarrow \pp^1 
$$
which can be identified with the natural map $ \pi_{1, \boldsymbol{\kappa}}=
\pi \circ f_{\boldsymbol{\kappa}} :M^1_1 (\boldsymbol{\kappa} ) \simeq 
S_{\boldsymbol{\kappa}} \setminus Y \rightarrow \Sigma_2 \rightarrow \pp^1$.  
One can easily see that  $\pi_{1, \boldsymbol{\kappa}}$ 
can be extended to the natural morphism 
$$
\pi_{1, \boldsymbol{\kappa}}:
\overline{M_1^1(\boldsymbol{\kappa})}  \simeq   S_{\boldsymbol{\kappa}} \rightarrow \pp^1. 
$$ Note that this morphism $\pi_{1, \boldsymbol{\kappa}}$ is the morphism 
induced by the linear system $|F| \simeq \pp^1$.  
In the Picard group of $\overline{M_1^1(\boldsymbol{\kappa})}  \simeq   S_{\boldsymbol{\kappa}}$, we have the relations
\begin{equation}\label{eq:reducible}
F \sim F'_i + E_i^{+} + E_i^{-} \quad  \mbox{ for each  $i$,  }  1 \leq i \leq 4
\end{equation}
which correspond to four singular fibers 
$ F'_i + E_i^{+} + E_i^{-}$ of the morphism $\pi_{1, \boldsymbol{\kappa}}$ (see Figure \ref{fig:b-hirzebruch}). 
Moreover  on a certain Zariski open set of $M_1^1(\boldsymbol{\kappa})$, it coincides with the natural 
projection $(p,q) \mapsto q$ as in Section \ref{sec-transverse}.  

On the other hand, we also have the natural morphism 
$ \Phi: \Mm _1^1(\boldsymbol{\kappa}) \rightarrow \Pp $ in 
Section \ref{sec-stablezone}, where 
in this case $ \Pp $ can be identified with the moduli space  $Q^{simple}/A$ of simple quasiparabolic bundles (cf. Lemma \ref{staba}).  
This induces another natural morphism
$\pi_{2, \boldsymbol{\kappa}}:M_1^1(\boldsymbol{\kappa}) \rightarrow \Pp \rightarrow \pp^1 $ 
which can be identified with the map $(q, \tilde{p}) \mapsto Q$ given by  (\ref{eqA12}) 
on a Zariski open set of 
$M_1^1(\boldsymbol{\kappa})$.  
From the construction of $\overline{M_1^1(\boldsymbol{\kappa})}$ as above, we see that 
 $\pi_{2, \boldsymbol{\kappa}}$ can be  extended to a morphism  $\pi_{2, \boldsymbol{\kappa}}:
\overline{M_1^1(\boldsymbol{\kappa})} \rightarrow \pp^1$.  
Let us denote by $L$ the class of general fiber of $\pi_{2, \boldsymbol{\kappa}}:
\overline{M_1^1(\boldsymbol{\kappa})} \rightarrow \pp^1$. 
Then from the calculation in Section \ref{sec-transverse},  we see that
\begin{equation}\label{eq:parafiber}
L \sim C_1 + F - E_{1}^{-} - E_{2}^{-}-E_{3}^{-} - E_{4}^{-}. 
\end{equation}
A general member $L$  of the linear system $|L|$ is isomorphic to 
$\pp^1$ and interesection numbers of related divisors are given as follows:
$$
L \cdot C_0 = 1, \ \  L \cdot E_{i}^{-} = 1, \ \   L \cdot F = 1.  
$$

For each $i$, $1 \leq i \leq 4$, let $\{j, k, l \}$ be the complement of 
$i$ in $\{1, 2, 3, 4\}$.  Then there exists an irreducible curve $C_{jkl}$ in $\Sigma_2$ 
such that $C_{jkl} \sim C_1$ and $C_{jkl}$ is passing through 
3 points $b_{j}^{-}, b_k^{-}, b_l^{-}$.
Let $(E')_i^-$ be the proper transform of $C_{jkl} \in \overline{M_1^1(\boldsymbol{\kappa})}$ by the blowing up $\overline{M_1^1(\boldsymbol{\kappa})} \rightarrow \Sigma_2$.  Then we see that 
$$
(E') _i^{-} \sim C_1 - E_j^{-} - E_{k}^{-} - E_{l}^{-},  
$$
and $((E')_i^{-})^2 = -1$, that is, $(E')_i^{-}$ is a $(-1)$-exceptional curve.  
The intersection point of $C_{jkl}$ with $F_i$ is denote by $(b')_i^{-}$.  Note that the morphism $\pi_{2, \boldsymbol{\kappa}}$ has also exactly 4 singular fibers, which 
correponds to the following linear equivalences of divisors 
\begin{equation}\label{fiberation2}
L \sim  F'_i + (E')_i^{-}+ E_i^{+} \quad \mbox{for each $i$ \quad } 1 \leq i \leq 4. 
\end{equation}

\begin{figure}[b]
\begin{center}
\unitlength 0.1in
\begin{picture}(40.99,41.31)(6.30,-41.51)
%
\special{pn 20}%
\special{pa 638 344}%
\special{pa 4194 344}%
\special{fp}%
%
\special{pn 20}%
\special{pa 1116 20}%
\special{pa 1116 2450}%
\special{fp}%
%
\special{pn 20}%
\special{pa 1764 28}%
\special{pa 1764 2458}%
\special{fp}%
%
\special{pn 20}%
\special{pa 2744 20}%
\special{pa 2744 2450}%
\special{fp}%
%
\special{pn 20}%
\special{pa 3554 28}%
\special{pa 3554 2458}%
\special{fp}%
%
\special{pn 8}%
\special{pa 638 2782}%
\special{pa 4194 2782}%
\special{fp}%
%
\special{pn 8}%
\special{pa 4680 344}%
\special{pa 4680 2458}%
\special{fp}%
\special{sh 1}%
\special{pa 4680 2458}%
\special{pa 4700 2391}%
\special{pa 4680 2405}%
\special{pa 4660 2391}%
\special{pa 4680 2458}%
\special{fp}%
%
\special{pn 20}%
\special{sh 1}%
\special{ar 1764 2790 10 10 0  6.28318530717959E+0000}%
\special{sh 1}%
\special{ar 1772 2782 10 10 0  6.28318530717959E+0000}%
%
\special{pn 20}%
\special{sh 1}%
\special{ar 2736 2782 10 10 0  6.28318530717959E+0000}%
\special{sh 1}%
\special{ar 2744 2774 10 10 0  6.28318530717959E+0000}%
%
\special{pn 20}%
\special{sh 1}%
\special{ar 3546 2782 10 10 0  6.28318530717959E+0000}%
\special{sh 1}%
\special{ar 3554 2774 10 10 0  6.28318530717959E+0000}%
%
\special{pn 20}%
\special{sh 1}%
\special{ar 1116 2790 10 10 0  6.28318530717959E+0000}%
\special{sh 1}%
\special{ar 1124 2782 10 10 0  6.28318530717959E+0000}%
\put(22.8200,-29.9300){\makebox(0,0)[lb]{$q$}}%
\put(10.7600,-29.7700){\makebox(0,0)[lb]{$t_1$}}%
\put(45.8300,-28.1500){\makebox(0,0)[lb]{{\Large $\pp^1$}}}%
%
\special{pn 13}%
\special{pa 630 28}%
\special{pa 4186 28}%
\special{pa 4186 2466}%
\special{pa 630 2466}%
\special{pa 630 28}%
\special{fp}%
\put(17.6400,-29.6800){\makebox(0,0)[lb]{$t_2$}}%
\put(25.9800,-29.6800){\makebox(0,0)[lb]{$t_3$}}%
\put(34.1600,-29.6800){\makebox(0,0)[lb]{$t_4$}}%
\put(6.8700,-12.8400){\makebox(0,0)[lb]{$\tilde{p}$}}%
%
\special{pn 20}%
\special{sh 1}%
\special{ar 1116 660 10 10 0  6.28318530717959E+0000}%
\special{sh 1}%
\special{ar 1116 660 10 10 0  6.28318530717959E+0000}%
%
\special{pn 20}%
\special{sh 1}%
\special{ar 1764 660 10 10 0  6.28318530717959E+0000}%
\special{sh 1}%
\special{ar 1764 660 10 10 0  6.28318530717959E+0000}%
%
\special{pn 20}%
\special{sh 1}%
\special{ar 2744 952 10 10 0  6.28318530717959E+0000}%
\special{sh 1}%
\special{ar 2744 952 10 10 0  6.28318530717959E+0000}%
%
\special{pn 20}%
\special{sh 1}%
\special{ar 3554 595 10 10 0  6.28318530717959E+0000}%
\special{sh 1}%
\special{ar 3554 595 10 10 0  6.28318530717959E+0000}%
\put(9.2200,-7.9000){\makebox(0,0)[lb]{$b^+_1$}}%
\put(14.0000,-3.1200){\makebox(0,0)[lb]{$C_0$}}%
\put(15.5300,-6.2800){\makebox(0,0)[lb]{$b^+_2$}}%
\put(25.7400,-9.1100){\makebox(0,0)[lb]{$b^+_3$}}%
\put(35.9500,-5.2200){\makebox(0,0)[lb]{$b^+_4$}}%
\put(43.8000,-2.7100){\makebox(0,0)[lb]{{\Large $S_{\kappa}=\tilde{\Sigma}_2$}}}%
\put(11.4800,-14.0500){\makebox(0,0)[lb]{$(b')^-_1$}}%
\put(15.7000,-16.0800){\makebox(0,0)[lb]{$b^-_2$}}%
\put(24.2000,-16.8100){\makebox(0,0)[lb]{$b^-_3$}}%
\put(33.7000,-18.3000){\makebox(0,0)[lb]{$b^-_4$}}%
\put(11.8100,-24.1000){\makebox(0,0)[lb]{$F'_1$}}%
\put(17.9600,-24.1000){\makebox(0,0)[lb]{$F'_2$}}%
\put(28.0100,-24.1000){\makebox(0,0)[lb]{$F'_3$}}%
\put(36.2700,-24.0100){\makebox(0,0)[lb]{$F'_4$}}%
%
\special{pn 13}%
\special{pa 662 1915}%
\special{pa 694 1910}%
\special{pa 725 1905}%
\special{pa 757 1900}%
\special{pa 788 1894}%
\special{pa 820 1889}%
\special{pa 851 1883}%
\special{pa 883 1877}%
\special{pa 914 1870}%
\special{pa 945 1863}%
\special{pa 976 1855}%
\special{pa 1007 1847}%
\special{pa 1038 1839}%
\special{pa 1069 1830}%
\special{pa 1084 1826}%
\special{sp}%
%
\special{pn 13}%
\special{pa 1173 1802}%
\special{pa 1205 1796}%
\special{pa 1236 1789}%
\special{pa 1268 1782}%
\special{pa 1299 1776}%
\special{pa 1330 1768}%
\special{pa 1362 1761}%
\special{pa 1393 1753}%
\special{pa 1423 1744}%
\special{pa 1454 1735}%
\special{pa 1484 1725}%
\special{pa 1514 1714}%
\special{pa 1544 1702}%
\special{pa 1573 1690}%
\special{pa 1602 1676}%
\special{pa 1631 1663}%
\special{pa 1660 1648}%
\special{pa 1688 1634}%
\special{pa 1707 1624}%
\special{sp}%
%
\special{pn 13}%
\special{pa 1821 1567}%
\special{pa 1846 1547}%
\special{pa 1872 1527}%
\special{pa 1897 1507}%
\special{pa 1922 1487}%
\special{pa 1947 1466}%
\special{pa 1972 1446}%
\special{pa 1996 1425}%
\special{pa 2019 1403}%
\special{pa 2042 1382}%
\special{pa 2064 1360}%
\special{pa 2086 1337}%
\special{pa 2107 1314}%
\special{pa 2127 1290}%
\special{pa 2147 1266}%
\special{pa 2166 1242}%
\special{pa 2185 1217}%
\special{pa 2203 1191}%
\special{pa 2220 1165}%
\special{pa 2237 1139}%
\special{pa 2253 1113}%
\special{pa 2269 1086}%
\special{pa 2284 1058}%
\special{pa 2299 1031}%
\special{pa 2313 1003}%
\special{pa 2327 974}%
\special{pa 2340 946}%
\special{pa 2353 917}%
\special{pa 2366 887}%
\special{pa 2378 858}%
\special{pa 2390 828}%
\special{pa 2401 798}%
\special{pa 2412 767}%
\special{pa 2422 737}%
\special{pa 2433 706}%
\special{pa 2443 675}%
\special{pa 2452 643}%
\special{pa 2462 612}%
\special{pa 2471 580}%
\special{pa 2480 548}%
\special{pa 2488 516}%
\special{pa 2497 484}%
\special{pa 2505 451}%
\special{pa 2513 419}%
\special{pa 2521 386}%
\special{pa 2528 353}%
\special{pa 2536 321}%
\special{pa 2543 288}%
\special{pa 2551 254}%
\special{pa 2558 221}%
\special{pa 2565 188}%
\special{pa 2572 155}%
\special{pa 2579 122}%
\special{pa 2586 88}%
\special{pa 2592 55}%
\special{pa 2598 28}%
\special{sp}%
%
\special{pn 13}%
\special{pa 2525 2450}%
\special{pa 2530 2418}%
\special{pa 2535 2386}%
\special{pa 2541 2354}%
\special{pa 2546 2322}%
\special{pa 2552 2290}%
\special{pa 2558 2258}%
\special{pa 2565 2227}%
\special{pa 2572 2196}%
\special{pa 2580 2165}%
\special{pa 2589 2134}%
\special{pa 2598 2104}%
\special{pa 2608 2074}%
\special{pa 2619 2044}%
\special{pa 2632 2014}%
\special{pa 2645 1986}%
\special{pa 2659 1957}%
\special{pa 2674 1929}%
\special{pa 2690 1901}%
\special{pa 2704 1876}%
\special{sp}%
%
\special{pn 13}%
\special{pa 2809 1745}%
\special{pa 2835 1726}%
\special{pa 2862 1707}%
\special{pa 2889 1690}%
\special{pa 2916 1673}%
\special{pa 2944 1658}%
\special{pa 2972 1645}%
\special{pa 3001 1634}%
\special{pa 3030 1625}%
\special{pa 3061 1617}%
\special{pa 3091 1612}%
\special{pa 3122 1608}%
\special{pa 3154 1606}%
\special{pa 3186 1604}%
\special{pa 3218 1604}%
\special{pa 3250 1606}%
\special{pa 3283 1608}%
\special{pa 3317 1611}%
\special{pa 3350 1614}%
\special{pa 3383 1619}%
\special{pa 3417 1623}%
\special{pa 3451 1628}%
\special{pa 3485 1633}%
\special{pa 3502 1636}%
\special{sp}%
%
\special{pn 13}%
\special{pa 3611 1664}%
\special{pa 3640 1679}%
\special{pa 3669 1694}%
\special{pa 3697 1709}%
\special{pa 3726 1724}%
\special{pa 3754 1739}%
\special{pa 3782 1755}%
\special{pa 3810 1771}%
\special{pa 3838 1787}%
\special{pa 3866 1804}%
\special{pa 3893 1821}%
\special{pa 3919 1838}%
\special{pa 3946 1856}%
\special{pa 3972 1875}%
\special{pa 3997 1894}%
\special{pa 4022 1913}%
\special{pa 4046 1934}%
\special{pa 4070 1955}%
\special{pa 4093 1977}%
\special{pa 4116 1999}%
\special{pa 4139 2022}%
\special{pa 4161 2045}%
\special{pa 4183 2069}%
\special{pa 4190 2076}%
\special{sp}%
%
\special{pn 13}%
\special{pa 800 514}%
\special{pa 1448 822}%
\special{da 0.070}%
%
\special{pn 13}%
\special{pa 1529 1340}%
\special{pa 2177 1648}%
\special{da 0.070}%
%
\special{pn 13}%
\special{pa 2461 1470}%
\special{pa 3109 1778}%
\special{da 0.070}%
%
\special{pn 13}%
\special{pa 3473 1672}%
\special{pa 4042 2109}%
\special{da 0.070}%
%
\special{pn 13}%
\special{pa 727 1940}%
\special{pa 1391 1624}%
\special{da 0.070}%
%
\special{pn 13}%
\special{pa 1580 750}%
\special{pa 2245 434}%
\special{da 0.070}%
%
\special{pn 13}%
\special{pa 2469 1081}%
\special{pa 3133 765}%
\special{da 0.070}%
%
\special{pn 13}%
\special{pa 3327 700}%
\special{pa 3992 385}%
\special{da 0.070}%
\put(47.2900,-16.0000){\makebox(0,0)[lb]{$\pi_{1,\kappa}$}}%
\put(6.4600,-7.3300){\makebox(0,0)[lb]{$E^+_1$}}%
\put(18.3000,-8.5000){\makebox(0,0)[lb]{$E^+_2$}}%
\put(29.7100,-10.1600){\makebox(0,0)[lb]{$E^+_3$}}%
\put(35.8700,-7.8100){\makebox(0,0)[lb]{$E^+_4$}}%
\put(7.6000,-17.8600){\makebox(0,0)[lb]{$b^-_1$}}%
\put(6.7900,-21.1000){\makebox(0,0)[lb]{$E^-_1$}}%
\put(19.7500,-18.5100){\makebox(0,0)[lb]{$E^-_2$}}%
\put(28.4100,-19.5600){\makebox(0,0)[lb]{$E^-_3$}}%
\put(38.0000,-22.2000){\makebox(0,0)[lb]{$E^-_4$}}%
%
\special{pn 13}%
\special{pa 638 1357}%
\special{pa 670 1360}%
\special{pa 702 1363}%
\special{pa 734 1366}%
\special{pa 766 1369}%
\special{pa 798 1372}%
\special{pa 830 1375}%
\special{pa 862 1378}%
\special{pa 894 1380}%
\special{pa 926 1383}%
\special{pa 957 1386}%
\special{pa 989 1388}%
\special{pa 1021 1390}%
\special{pa 1053 1393}%
\special{pa 1085 1395}%
\special{pa 1117 1397}%
\special{pa 1149 1398}%
\special{pa 1181 1400}%
\special{pa 1213 1402}%
\special{pa 1245 1403}%
\special{pa 1277 1404}%
\special{pa 1309 1405}%
\special{pa 1341 1406}%
\special{pa 1373 1406}%
\special{pa 1405 1407}%
\special{pa 1437 1407}%
\special{pa 1469 1406}%
\special{pa 1501 1406}%
\special{pa 1533 1405}%
\special{pa 1565 1404}%
\special{pa 1597 1403}%
\special{pa 1629 1401}%
\special{pa 1661 1400}%
\special{pa 1693 1398}%
\special{pa 1715 1397}%
\special{sp 0.070}%
%
\special{pn 13}%
\special{pa 2704 1421}%
\special{pa 2673 1436}%
\special{pa 2641 1451}%
\special{pa 2610 1465}%
\special{pa 2578 1479}%
\special{pa 2547 1492}%
\special{pa 2516 1504}%
\special{pa 2485 1516}%
\special{pa 2454 1525}%
\special{pa 2423 1534}%
\special{pa 2392 1540}%
\special{pa 2361 1545}%
\special{pa 2331 1548}%
\special{pa 2300 1549}%
\special{pa 2270 1547}%
\special{pa 2240 1543}%
\special{pa 2210 1536}%
\special{pa 2180 1526}%
\special{pa 2151 1515}%
\special{pa 2122 1502}%
\special{pa 2092 1488}%
\special{pa 2063 1472}%
\special{pa 2034 1456}%
\special{pa 2015 1446}%
\special{sp 0.070}%
%
\special{pn 13}%
\special{pa 2015 1446}%
\special{pa 2015 1446}%
\special{pa 2015 1446}%
\special{pa 2015 1446}%
\special{pa 2015 1446}%
\special{pa 2015 1446}%
\special{sp}%
%
\special{pn 13}%
\special{pa 2015 1446}%
\special{pa 2015 1446}%
\special{pa 2015 1446}%
\special{pa 2015 1446}%
\special{pa 2015 1446}%
\special{pa 2015 1446}%
\special{sp}%
\put(24.2000,-2.8000){\makebox(0,0)[lb]{$Q$}}%
%
\special{pn 20}%
\special{sh 1}%
\special{ar 2517 336 10 10 0  6.28318530717959E+0000}%
\special{sh 1}%
\special{ar 2517 336 10 10 0  6.28318530717959E+0000}%
%
\special{pn 8}%
\special{pa 638 2782}%
\special{pa 2282 2790}%
\special{fp}%
\special{sh 1}%
\special{pa 2282 2790}%
\special{pa 2215 2770}%
\special{pa 2229 2790}%
\special{pa 2215 2810}%
\special{pa 2282 2790}%
\special{fp}%
%
\special{pn 8}%
\special{pa 630 2450}%
\special{pa 638 1203}%
\special{fp}%
\special{sh 1}%
\special{pa 638 1203}%
\special{pa 618 1270}%
\special{pa 638 1256}%
\special{pa 658 1270}%
\special{pa 638 1203}%
\special{fp}%
%
\special{pn 13}%
\special{pa 2793 1446}%
\special{pa 2825 1450}%
\special{pa 2857 1455}%
\special{pa 2889 1459}%
\special{pa 2921 1464}%
\special{pa 2952 1469}%
\special{pa 2984 1475}%
\special{pa 3015 1481}%
\special{pa 3047 1488}%
\special{pa 3077 1496}%
\special{pa 3108 1505}%
\special{pa 3138 1515}%
\special{pa 3168 1526}%
\special{pa 3198 1538}%
\special{pa 3227 1551}%
\special{pa 3246 1559}%
\special{sp 0.070}%
%
\special{pn 13}%
\special{pa 3246 1559}%
\special{pa 3246 1559}%
\special{pa 3246 1559}%
\special{pa 3246 1559}%
\special{pa 3246 1559}%
\special{pa 3246 1559}%
\special{pa 3246 1559}%
\special{pa 3246 1559}%
\special{pa 3246 1559}%
\special{pa 3246 1559}%
\special{pa 3246 1559}%
\special{pa 3246 1559}%
\special{pa 3246 1559}%
\special{pa 3246 1559}%
\special{pa 3246 1559}%
\special{pa 3246 1559}%
\special{sp 0.070}%
%
\special{pn 13}%
\special{pa 3246 1559}%
\special{pa 3246 1559}%
\special{pa 3246 1559}%
\special{pa 3246 1559}%
\special{pa 3246 1559}%
\special{pa 3246 1559}%
\special{pa 3246 1559}%
\special{pa 3246 1559}%
\special{pa 3246 1559}%
\special{pa 3246 1559}%
\special{pa 3246 1559}%
\special{sp 0.070}%
%
\special{pn 13}%
\special{pa 3578 1924}%
\special{pa 3612 1927}%
\special{pa 3646 1929}%
\special{pa 3681 1932}%
\special{pa 3714 1935}%
\special{pa 3748 1939}%
\special{pa 3781 1944}%
\special{pa 3814 1949}%
\special{pa 3846 1955}%
\special{pa 3877 1962}%
\special{pa 3908 1971}%
\special{pa 3938 1980}%
\special{pa 3967 1992}%
\special{pa 3995 2004}%
\special{pa 4022 2019}%
\special{pa 4048 2035}%
\special{pa 4073 2053}%
\special{pa 4097 2074}%
\special{pa 4120 2096}%
\special{pa 4142 2119}%
\special{pa 4163 2143}%
\special{pa 4184 2168}%
\special{pa 4202 2191}%
\special{sp 0.070}%
\put(7.1900,-15.3500){\makebox(0,0)[lb]{$(E')_1^{-}$}}%
%
\special{pn 13}%
\special{pa 3295 1608}%
\special{pa 3303 1641}%
\special{pa 3311 1674}%
\special{pa 3321 1706}%
\special{pa 3332 1737}%
\special{pa 3346 1766}%
\special{pa 3362 1792}%
\special{pa 3381 1815}%
\special{pa 3404 1835}%
\special{pa 3431 1853}%
\special{pa 3459 1869}%
\special{pa 3489 1883}%
\special{pa 3506 1891}%
\special{sp 0.070}%
%
\special{pn 13}%
\special{pa 3506 1891}%
\special{pa 3506 1891}%
\special{pa 3506 1891}%
\special{pa 3506 1891}%
\special{pa 3506 1891}%
\special{pa 3506 1891}%
\special{pa 3506 1891}%
\special{pa 3506 1891}%
\special{pa 3506 1891}%
\special{pa 3506 1891}%
\special{pa 3506 1891}%
\special{pa 3506 1891}%
\special{pa 3506 1891}%
\special{sp 0.070}%
\put(20.7200,-11.0500){\makebox(0,0)[lb]{$L$}}%
\put(37.9000,-17.3000){\makebox(0,0)[lb]{$L$}}%
%
\special{pn 13}%
\special{pa 3506 1891}%
\special{pa 3506 1891}%
\special{pa 3506 1891}%
\special{pa 3506 1891}%
\special{pa 3506 1891}%
\special{pa 3506 1891}%
\special{pa 3506 1891}%
\special{pa 3506 1891}%
\special{pa 3506 1891}%
\special{pa 3506 1891}%
\special{pa 3506 1891}%
\special{pa 3506 1891}%
\special{pa 3506 1891}%
\special{sp 0.070}%
\put(9.5400,-32.3600){\makebox(0,0)[lb]{$-K_{S} \sim Y = 2 C_0+F'_1+F'_2+F'_3+F'_4$}}%
\put(10.0300,-34.7100){\makebox(0,0)[lb]{$S_{\boldsymbol{\kappa}}\simeq \overline{M^1_1({\boldsymbol{\kappa}})}$, \quad $S_{\boldsymbol{\kappa}} \setminus Y \simeq M^1_1({\boldsymbol{\kappa}})$}}%
%
\special{pn 13}%
\special{sh 1}%
\special{ar 1108 1753 10 10 0  6.28318530717959E+0000}%
\special{sh 1}%
\special{ar 1772 1470 10 10 0  6.28318530717959E+0000}%
\special{sh 1}%
\special{ar 2760 1608 10 10 0  6.28318530717959E+0000}%
\special{sh 1}%
\special{ar 3838 1964 10 10 0  6.28318530717959E+0000}%
\special{sh 1}%
\special{ar 3570 1737 10 10 0  6.28318530717959E+0000}%
\special{sh 1}%
\special{ar 3562 1737 10 10 0  6.28318530717959E+0000}%
%
\special{pn 13}%
\special{pa 3506 1891}%
\special{pa 3506 1891}%
\special{pa 3506 1891}%
\special{pa 3506 1891}%
\special{pa 3506 1891}%
\special{pa 3506 1891}%
\special{pa 3506 1891}%
\special{pa 3506 1891}%
\special{pa 3506 1891}%
\special{pa 3506 1891}%
\special{pa 3506 1891}%
\special{pa 3506 1891}%
\special{pa 3506 1891}%
\special{sp 0.070}%
%
\special{pn 13}%
\special{pa 3506 1891}%
\special{pa 3506 1891}%
\special{pa 3506 1891}%
\special{pa 3506 1891}%
\special{pa 3506 1891}%
\special{pa 3506 1891}%
\special{pa 3506 1891}%
\special{pa 3506 1891}%
\special{pa 3506 1891}%
\special{pa 3506 1891}%
\special{pa 3506 1891}%
\special{pa 3506 1891}%
\special{pa 3506 1891}%
\special{sp 0.070}%
%
\special{pn 13}%
\special{pa 3506 1891}%
\special{pa 3506 1891}%
\special{pa 3506 1891}%
\special{pa 3506 1891}%
\special{pa 3506 1891}%
\special{pa 3506 1891}%
\special{pa 3506 1891}%
\special{pa 3506 1891}%
\special{pa 3506 1891}%
\special{pa 3506 1891}%
\special{pa 3506 1891}%
\special{pa 3506 1891}%
\special{pa 3506 1891}%
\special{sp 0.070}%
%
\special{pn 13}%
\special{pa 3506 1891}%
\special{pa 3506 1891}%
\special{pa 3506 1891}%
\special{pa 3506 1891}%
\special{pa 3506 1891}%
\special{pa 3506 1891}%
\special{pa 3506 1891}%
\special{pa 3506 1891}%
\special{pa 3506 1891}%
\special{pa 3506 1891}%
\special{pa 3506 1891}%
\special{pa 3506 1891}%
\special{pa 3506 1891}%
\special{sp 0.070}%
%
\special{pn 13}%
\special{pa 3506 1891}%
\special{pa 3506 1891}%
\special{pa 3506 1891}%
\special{pa 3506 1891}%
\special{pa 3506 1891}%
\special{pa 3506 1891}%
\special{pa 3506 1891}%
\special{pa 3506 1891}%
\special{pa 3506 1891}%
\special{pa 3506 1891}%
\special{pa 3506 1891}%
\special{pa 3506 1891}%
\special{pa 3506 1891}%
\special{sp 0.070}%
%
\special{pn 13}%
\special{pa 3506 1891}%
\special{pa 3506 1891}%
\special{pa 3506 1891}%
\special{pa 3506 1891}%
\special{pa 3506 1891}%
\special{pa 3506 1891}%
\special{pa 3506 1891}%
\special{pa 3506 1891}%
\special{pa 3506 1891}%
\special{pa 3506 1891}%
\special{pa 3506 1891}%
\special{pa 3506 1891}%
\special{pa 3506 1891}%
\special{sp 0.070}%
%
\special{pn 13}%
\special{pa 3506 1891}%
\special{pa 3506 1891}%
\special{pa 3506 1891}%
\special{pa 3506 1891}%
\special{pa 3506 1891}%
\special{pa 3506 1891}%
\special{pa 3506 1891}%
\special{pa 3506 1891}%
\special{pa 3506 1891}%
\special{pa 3506 1891}%
\special{pa 3506 1891}%
\special{pa 3506 1891}%
\special{pa 3506 1891}%
\special{sp 0.070}%
\put(28.8200,-13.9700){\makebox(0,0)[lb]{$(E')_1^{-}$}}%
%
\special{pn 13}%
\special{pa 1821 1413}%
\special{pa 1926 1421}%
\special{da 0.070}%
%
\special{pn 13}%
\special{pa 2360 40}%
\special{pa 2350 2430}%
\special{fp}%
\special{pa 2360 2450}%
\special{pa 2350 2450}%
\special{fp}%
\put(21.9000,-14.1000){\makebox(0,0)[lb]{$F$}}%
\put(9.5000,-39.9000){\makebox(0,0)[lb]{$F \cdot L = 1 $}}%
%
\special{pn 4}%
\special{sh 1}%
\special{ar 2350 920 10 10 0  6.28318530717959E+0000}%
\special{sh 1}%
\special{ar 2350 920 10 10 0  6.28318530717959E+0000}%
\put(9.5000,-37.0000){\makebox(0,0)[lb]{$L \sim C_1 + F - E_1^{-}-E_2^--E_3^--E_4^-$}}%
\end{picture}%

\end{center}
\caption{8 points blowing ups of Hirzebruch surface $\Sigma_2$}
\label{fig:b-hirzebruch}
\end{figure}

We have the following two fibrations
\begin{equation}\label{eq:twofib}
\begin{CD} 
\overline{M_1^1(\boldsymbol{\kappa})} & \quad \stackrel{\pi_{2, \boldsymbol{\kappa}}}{\longrightarrow} & \quad 
\pp^1    \\
 @V \pi_{1, \boldsymbol{\kappa}} VV     \\
\pp^1&    &  \\  
\end{CD}
\end{equation}
where $\pi_{1, \boldsymbol{\kappa}}$, $\pi_{2, \boldsymbol{\kappa}}$ 
are corresponding to the linear systems $|F|$ and $|L|=
|C_1+F - E_{1}^{-} - E_{2}^{-}-E_{3}^{-} - E_{4}^{-}|$ respectively.  
These  give two different Lagrangian fibrations on the moduli space $
M_1^1(\boldsymbol{\kappa})$.

It is interesting to remark that the morphism $\pi_{2, \boldsymbol{\kappa}}$ 
can be identified with the apparent singularity map 
$\pi_{1, \boldsymbol{\kappa'}}$ for different data $\boldsymbol{\kappa}'$.   In fact, contracting the $8$ exceptional curves 
$(E'_i)^{+}, E_i^{-}$, we obain 
the morphism  $\overline{M_1^1(\boldsymbol{\kappa})} \rightarrow \Sigma_2$, and then the points of blowing ups are corresponding to 
$(b'_i)^{+}, b_i^{-}$ on the fiber $F_i$ of the natural fibration 
of $\Sigma_2 \rightarrow \pp^1$.

We summarize the results. 
\begin{proposition}\label{prop:geometry}
The $q$-fibration and $Q$-fibration in Section \ref{sec-transverse} 
can be identified with 
the maps $\pi_{1, \boldsymbol{\kappa}}$, $\pi_{2, \boldsymbol{\kappa}}$ 
in (\ref{eq:twofib}) respectively.  The general fibers of  $\pi_{1, \boldsymbol{\kappa}}$ and  $\pi_{2, \boldsymbol{\kappa}}$ are given by $F$ and $L$ respectively and 
they are strongly transversal, that is, 
$
F \cdot L = 1. 
$
\end{proposition}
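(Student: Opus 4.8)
The plan is to assemble the proposition from three ingredients that are already in place: the explicit coordinate computations of Section~\ref{sec-transverse}, the description of $\overline{M_1^1(\boldsymbol{\kappa})}\simeq S_{\boldsymbol{\kappa}}$ recalled above from \cite[Theorem 4.1]{InabaIwasakiSaito2}, and a short intersection-theory computation on the rational surface $S_{\boldsymbol{\kappa}}$.

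First I would identify the $q$-fibration with $\pi_{1,\boldsymbol{\kappa}}$. By construction $\Upsilon(E,\nabla,P_{\hdot})$ is the apparent singularity, i.e.\ the zero of the $\Oo_X$-linear operator $\varphi\colon\Oo(1)\to\Oo\otimes\Omega^1_X(\log D)$; in the trivialization of Section~\ref{sec-transverse} this is exactly the point $x=q$ at which $A(1,2)$ vanishes, cf.\ \eqref{eqA12}. Hence on the Zariski-open chart $(p,q)\in\cc^2$ with $q\neq 0,1,t$, the composite $\Mm_1^1(\boldsymbol{\kappa})\to\Pp\to\pp^1$ is $(p,q)\mapsto q$, which is precisely the restriction of $\pi_{1,\boldsymbol{\kappa}}=\pi\circ f_{\boldsymbol{\kappa}}$, the morphism attached to $|F|$. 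Two morphisms to $\pp^1$ that agree on a dense open subset of the smooth projective surface $\overline{M_1^1(\boldsymbol{\kappa})}$ coincide, so the two maps are equal; the four reducible fibres $F'_i+E_i^++E_i^-$ are the ones displayed in \eqref{eq:reducible}.

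Next I would run the same argument for $\Phi$. Here a point of $\Pp=\Qq^{\mathrm{simple}}\stackquot A$ is the underlying simple quasiparabolic bundle, and $Q$ is read off from the line subbundle $\Oo(-1)\hookrightarrow B$ through the four parabolic directions; the computation of Section~\ref{sec-transverse} gives $Q=q+\kappa_0/p$, \eqref{eq:okamoto}. So on the same open chart $\pi_{2,\boldsymbol{\kappa}}$ is $(q,\tilde p)\mapsto Q$, and one must check that this extends to a morphism on all of $\overline{M_1^1(\boldsymbol{\kappa})}$ whose general fibre has class $L\sim C_1+F-E_1^--E_2^--E_3^--E_4^-$, \eqref{eq:parafiber}, and whose four degenerate fibres are $F'_i+(E')_i^-+E_i^+$, \eqref{fiberation2}. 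This is the one genuinely substantial point, and hence the main obstacle: one has to match the degenerate quasiparabolic structures appearing in Theorem~\ref{stabletheorem} (three colinear points $U_j$, or one point $U_i$ at the origin) with the exceptional curves $(E')_i^-$ and $E_i^+$, thereby reconciling the non-separated moduli $\Pp$ with the fibres of $\pi_{2,\boldsymbol{\kappa}}$; once \eqref{fiberation2} is established, everything that remains is formal.

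Granting these identifications, transversality is immediate. On $S_{\boldsymbol{\kappa}}$ the $E_i^\pm$ are $f_{\boldsymbol{\kappa}}$-exceptional, so $F\cdot E_i^\pm=0$; moreover $F^2=0$ and, since $C_1\sim C_0+2F$ with $C_0$ a section of $\pi$, one has $F\cdot C_1=F\cdot C_0+2F^2=1$. Therefore
\[
F\cdot L \;=\; F\cdot C_1 + F^2 - \sum_{i=1}^4 F\cdot E_i^- \;=\; 1,
\]
so a general fibre $F$ of $\pi_{1,\boldsymbol{\kappa}}$ and a general fibre $L$ of $\pi_{2,\boldsymbol{\kappa}}$ meet in exactly one point. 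A general member of $|F|$ is a fibre of $\pi$ hence a $\pp^1$, and $L^2=(C_1+F-\sum E_i^-)^2=0$ exhibits $L$ as a base-point-free pencil, a general member of which is again $\pp^1$; finally $F\cdot Y=L\cdot Y=0$ together with $\omega=dp\wedge dq$ makes both families Lagrangian in $S_{\boldsymbol{\kappa}}\setminus Y=M_1^1(\boldsymbol{\kappa})$. This yields Proposition~\ref{prop:geometry}.
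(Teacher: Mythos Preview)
Your approach is essentially the paper's: Proposition~\ref{prop:geometry} is stated there as a summary of the preceding discussion in Section~\ref{sec-okamoto}, and you have correctly reconstructed that discussion together with the intersection computation $F\cdot L=F\cdot C_1+F^2-\sum_i F\cdot E_i^-=1$.

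One small error in your final sentence: the claim $F\cdot Y=L\cdot Y=0$ is false. Since $C_0$ is a section of $\pi$ one has $F\cdot C_0=1$, and the paper records $L\cdot C_0=1$ as well; combined with $F\cdot F'_i=L\cdot F'_i=0$ this gives $F\cdot Y=L\cdot Y=2$ (and $F\cdot Y_{\rm red}=L\cdot Y_{\rm red}=1$, which is exactly why the fibres become $\aaa^1$ after removing $Y$). The Lagrangian property you mention is not part of the proposition anyway, and when needed it follows directly from $\omega=dp\wedge dq$ vanishing on the level sets of $q$ and of $Q=q+\kappa_0/p$, not from an intersection number with $Y$. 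Drop that sentence and the rest stands.
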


Next we vary the parameter $\boldsymbol{\kappa}$ and consider the 
B\"acklund transformations acting on the family of the moduli spaces.  
From \cite{InabaIwasakiSaito,InabaIwasakiSaito2}, after fixing weights
$\boldsymbol{\alpha}$, we get a smooth fibration 
$\boldsymbol{\kappa}:M _1^{1,\boldsymbol{\alpha}}\to\mathbb C^4$ with fiber $M _1^{1,\boldsymbol{\alpha}}(\boldsymbol{\kappa})$. The classical group of B\"{a}cklund transformations is an equivariant
(with respect to $\boldsymbol{\kappa}$-projection) group of birational transformations
(that preserves the isomonodromy flow when we consider $t$ as a variable).
In restriction to fibers $M _1^{1,\boldsymbol{\alpha}}(\boldsymbol{\kappa})$
with Kostov-generic $\boldsymbol{\kappa}$, B\"{a}cklund transformations are biregular.
The restriction of the B\"{a}cklund transformations group to the action on 
the parameter space $\boldsymbol{\kappa}$ is faithfull and its image is an
affine reflection group, an affine Weyl group of type $D_4$.
Let us describe the generators. 

Firstly, one can switch the parabolic structure over $t_i$
to the eigendirection of the other eigenvalue $-\frac{\kappa_i}{2}$. 
By using  coordinates $(q, p)$ for a suitable Zariski open set of the moduli 
spaces, we can describe $4$ generators as follows:
$$
\left\{\begin{matrix}
s_1:(\kappa_1,\kappa_2,\kappa_3,\kappa_4,q,p)\mapsto
(-\kappa_1,\kappa_2,\kappa_3,\kappa_4,q,p-\frac{\kappa_0}{q})\\
s_2:(\kappa_1,\kappa_2,\kappa_3,\kappa_4,q,p)\mapsto
(\kappa_1,-\kappa_2,\kappa_3,\kappa_4,q,p-\frac{\kappa_0}{q-1})\\
s_3:(\kappa_1,\kappa_2,\kappa_3,\kappa_4,q,p)\mapsto
(\kappa_1,\kappa_2,-\kappa_3,\kappa_4,q,p-\frac{\kappa_0}{q-t})\\
s_4:(\kappa_1,\kappa_2,\kappa_3,\kappa_4,q,p)\mapsto
(\kappa_1,\kappa_2,\kappa_3,-\kappa_4,q,p)\\
\end{matrix}\right.
$$
One can next permute the poles of the connection by a fractional linear $x$-transformation
in such a way that the cross-ratio
$t$ is preserved (we skip here the permutations that do not preserve $t$ parameter).
$$
\left\{\begin{matrix}
r_{(12)(34)}:(\kappa_1,\kappa_2,\kappa_3,\kappa_4,q,p)\mapsto
(\kappa_2,\kappa_1,\kappa_4,\kappa_3,t\frac{q-1}{q-t},-(q-t)\frac{(q-t)p+\kappa_0}{t(t-1)})\\
r_{(13)(24)}:(\kappa_1,\kappa_2,\kappa_3,\kappa_4,q,p)\mapsto
(\kappa_3,\kappa_4,\kappa_1,\kappa_2,\frac{q-t}{q-1},(q-1)\frac{(q-1)p+\kappa_0}{t-1})\\
r_{(14)(23)}:(\kappa_1,\kappa_2,\kappa_3,\kappa_4,q,p)\mapsto
(\kappa_4,\kappa_3,\kappa_2,\kappa_1,\frac{t}{q},-q\frac{qp+\kappa_0}{t})\\
\end{matrix}\right.
$$
One can also apply an even number of elementary transformations centered
at parabolics. This has the effect to shift $\boldsymbol{\kappa}$ parameters
by integers. We skip the formula of generators which is much too huge;
we will describe them in another way just below.
By the way, we obtain the group of Schlesinger transformations. So far,
the transformations come from geometric transformations on parabolic connections.

Finally, the larger B\"{a}cklund transformation group is generated by the transformations
$r_{(ij)(kl)}$ and $s_i$ above and the extra Okamoto involution:
\begin{equation}\label{eq:ok-inv}
s_0:(\kappa_1,\kappa_2,\kappa_3,\kappa_4,q,p)\mapsto
(\kappa_1+\kappa_0,\kappa_2+\kappa_0,\kappa_3+\kappa_0,\kappa_4+\kappa_0,q+\frac{\kappa_0}{p},p)
\end{equation}
The geometric nature (even the Galois group) of the connection is not preserved.
This involution exchanges finite and
infinite monodromy, reducible and irreducible monodromy, and
real monodromy groups $SL(2,\rr )$ and $SU(2)$. 
The first author and S. Cantat have described the
action of these on the Betti moduli spaces in Appendix B of \cite{CantatLoray}. 

We have relations
$$
s_i^2=1,\ \ \ s_is_j=s_js_i\ \text{for}\ i,j\not=0\ \ \ \text{and}\ \ \ s_0s_is_0=s_is_0s_i
$$
$$
r_\sigma^2=1 \ \ \ \text{and}\ \ \ r_\sigma s_i=s_jr_\sigma\ \text{for}\ \sigma=(ij)(kl)
$$
The elementary transformations can be derived by combinations like:
$$
r_{(12)(34)}s_3s_4s_0s_1s_2s_0:(\kappa_1,\kappa_2,\kappa_3,\kappa_4)\mapsto
(\kappa_1+1,\kappa_2+1,\kappa_3,\kappa_4)
$$
(we omit the huge formula in $p$ and $q$).

Our main remark of the section is that the Okamoto 
transformation $s_0$ exchanges the two fibrations. Precisely,
recall that the targets of the two maps $\Upsilon$ and $\Phi$
are canonically identified as $\Pp$ (see Sections \ref{sec-unstablezone} and \ref{sec-stablezone}).
After projection $\Pp\to\pp^1$ (identifying pair-wise the non separated points)
we respectively get the two maps $ \pi_{1, \boldsymbol{\kappa}}, \pi_{2, \boldsymbol{\kappa}}$ or $q, Q:M _1^{1,\boldsymbol{\alpha}}\to\pp^1$ computed above (here we consider $\boldsymbol{\kappa}$ as variables). 
Comparing (\ref{eq:okamoto}) with  (\ref{eq:ok-inv}), 
one can then check that the $Q$-map factors as 
\begin{equation}\label{eq:q-Q}
Q=q\circ s_0.
\end{equation}
Since $s_0$ is an involution, we also get
$$
q=Q\circ s_0.
$$
A similar fact was already observed for $\mathrm{SL}(2,\mathbb C)$-connections
on the trivial bundle $\Oo\oplus\Oo$ by Arinkin-Lysenko in \cite{ArinkinLysenko2}
section $8$ and in \cite{Loray}. More precisely, following \cite{ArinkinLysenko2}, 
the two maps $\pi_{i, \boldsymbol{\kappa}}$ above, $i=1,2$, glue together to define a proper morphism
$$ \pi_{1, \boldsymbol{\kappa}}\times \pi_{2, \boldsymbol{\kappa}}\ :\ \overline{M _1^{1,\boldsymbol{\alpha}}}\to\pp^1\times\pp^1$$
which is just the blow-up of $8$ points along the diagonal. More precisely,
the  coordinates 
$(x,y)$ on $\pp^1\times\pp^1$ are given by the two fibrations
$$x=q\ \ \ \text{and}\ \ \ y=q+\frac{\kappa_0}{p}.$$
The usual symplectic form is given by 
$$dp\wedge dq=\kappa_0\frac{dx\wedge dy}{(x-y)^2}.$$
The $8$ points to blow-up are the $4$ ordinary points
$$(x,y)=(0,0),\ \ \ (1,1),\ \ \ (t,t)\ \ \ \text{and}\ \ \ (\infty,\infty)$$
along the diagonal and next the $4$ infinitesimal points over, given by
the respective slopes 
$$\frac{dy}{dx}=1+\frac{\kappa_0}{\kappa_1},\ \ \ 1+\frac{\kappa_0}{\kappa_2},\ \ \ 1+\frac{\kappa_0}{\kappa_3}\ \ \ 
\text{and}\ \ \ \frac{\kappa_4}{\kappa_0+\kappa_4}.$$
Indeed, the blow-ups are exactly those ones needed to desingularize the alternate parabolic fibration
$$Q'=q+\frac{1-\kappa_0}{p-\frac{\kappa_1}{q}-\frac{\kappa_2}{q-1}-\frac{\kappa_3}{q-t}}\ =\ x+\frac{1-\kappa_0}{\frac{\kappa_0}{y-x}-\frac{\kappa_1}{x}-\frac{\kappa_2}{x-1}-\frac{\kappa_3}{x-t}}.$$
 The anti-canonical divisor 
$Y=2 C_0+F_1'+F_2'+F_3'+F_4'$ is therefore defined by
the strict transform $C_0$ of the diagonal and the strict transforms
$F_i'$ of the $F_i$'s.
We note that the fibration given by the dual coordinate $p=\frac{\kappa_0}{y-x}$ (common for both $x$ and $y$ fibrations)
is simply given in this picture by the fibration
$$dp=0\ \ \ \Leftrightarrow\ \ \ dx=dy.$$
Like in Arinkin-Lysenko's picture, the anti-canonical divisor 
$Y=2 C_0+F_1'+F_2'+F_3'+F_4'$ at infinity
is defined by the strict transforms of the diagonal, $C_0$, and the $4$ exceptional divisors $F_i'$ produced by firstly blowing-up the $4$ ordinary points.
Last, but not least, the Okamoto symmetry is given in this picture
by 
$$
s_0:\ \ \ \left\{\begin{matrix} 
\kappa_i&\mapsto&\kappa_i+\kappa_0\ \text{for}\ i=1,2,3,4,\\
\kappa_0&\mapsto&-\kappa_0\hfill\\
(x,y)&\mapsto&(y,x)\hfill
\end{matrix}\right.
$$
This provides an alternate and nice description 
of our moduli space.

As noticed in Section \ref{sec-unstablezone}, there are $8$ unstable zone for the weights 
$\boldsymbol{\alpha}$, one of which giving the $q$-fibration. The other ones
give other cyclic vectors and thus other fibrations. They can be deduce 
from $q$ after applying an even number of elementary transformations
at the parabolics $P_i$. This is also given by B\"{a}cklund transformations.
For instance
$$
r_{(12)(34)}s_0s_1s_2s_0:(\kappa_1,\kappa_2,\kappa_3,\kappa_4,q,p)\mapsto
(1-\kappa_1,1-\kappa_2,\kappa_3,\kappa_4,q',p') .
$$
where 
$$
q'=t(q-1)(q-t)\frac{p^2+\left(\frac{1-\kappa_1-\kappa_2}{q-1}-\frac{\kappa_3}{q-t}\right)p+\frac{\kappa_0(\kappa_0+\kappa_4)}{(q-1)(q-t)}}{((q-t)p+\kappa_0+\kappa_4)((q-t)p+\kappa_0)}
$$
and
$$
p'=-\frac{((q-t)p+\kappa_0+\kappa_4)((q-t)p+\kappa_0)}{t(t-1)p}.
$$
Here, $q'$ gives the parabolic fibration corresponding to the choice $r_1^+,r_2^+,r_3^-,r_4^-$;
this is one of case (c) of the unstable zone.

There are $16$ natural choices for the parabolic
structure, corresponding to a choice of one of the two eigenvalues
at each point. But switching the parabolic structure over $t_i$ is given by the
action of the symmetry $s_i$, $i=1,2,3,4$. So the $16$ parabolic fibrations
are all obtained from the $Q$-fibration after applying an element of the $16$-order group 
generated by the $s_i$, $i=1,2,3,4$. For instance, switching for the other parabolic structure
$P_i'$ defined by the $r_i^+$ eigenspace, we get the fibration defined by 
$$
Q'=Q\circ s_1s_2s_3s_4=q+\frac{1-\kappa_0}{p-\frac{\kappa_1}{q}-\frac{\kappa_2}{q-1}-\frac{\kappa_3}{q-t}}
$$
So the involution $s_1s_2s_3s_4$ exchanges the two parabolic fibrations $Q$ and $Q'$.

Among all affine $\mathbb A^1$-fibrations over $\Pp$ that can be deduced on our moduli space 
by applying B\"{a}cklund transformations (there are infinitely many)
on 
the $q$-fibration, 
the $16$ ones above play a special role:


\begin{proposition}
The $16$ parabolic fibrations above are the unique affine $\mathbb A^1$-fibrations on $M^1_1(\boldsymbol{\kappa})$ that are strongly transversal to the $q$-fibration and that compactify as $\pp^1$-fibrations in the natural compactification $\overline{M^1_1(\boldsymbol{\kappa})}$.  
\end{proposition}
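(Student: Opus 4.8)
The plan is to carry everything out numerically on the rational surface $S_{\boldsymbol{\kappa}}\simeq\overline{M_1^1(\boldsymbol{\kappa})}$ of Section~\ref{sec-okamoto}, whose Picard group is freely generated by $C_0,F,E_1^{\pm},\dots,E_4^{\pm}$ with $C_0^2=-2$, $C_0\cdot F=1$, $F^2=0$, $(E_i^{\epsilon})^2=-1$ and all other products of distinct generators zero, in which the boundary is the anticanonical divisor $Y=2C_0+F_1'+F_2'+F_3'+F_4'$ with $F_i'=F-E_i^+-E_i^-$, so that $M_1^1(\boldsymbol{\kappa})=S_{\boldsymbol{\kappa}}\setminus Y$ and the $q$-fibration is the morphism attached to the base-point-free pencil $|F|$ (Proposition~\ref{prop:geometry}). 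Let $g$ be an affine $\mathbb A^1$-fibration on $M_1^1(\boldsymbol{\kappa})$ which is strongly transversal to the $q$-fibration and extends to a $\pp^1$-fibration $\overline g\colon S_{\boldsymbol{\kappa}}\to\overline C$; since $S_{\boldsymbol{\kappa}}$ is rational, $\overline C\cong\pp^1$. I would take $D$ to be the class of a general fibre of $\overline g$ and record that $D$ is nef, that $D^2=0$, that the general fibre is a smooth rational curve distinct from every component of $Y$, and hence — by adjunction, $p_a=0$ — that $D\cdot K_{S_{\boldsymbol{\kappa}}}=-2$, i.e. $D\cdot Y=2$.

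The heart of the argument is to turn the two hypotheses on $g$ into intersection numbers. First, two general members of the base-point-free pencils $|D|$ (or of $\overline g^*|\Oo_{\pp^1}(1)|\subseteq|D|$) and $|F|$ meet transversally, so ``strongly transversal'' (a single intersection point) forces $D\cdot F=1$. Second, writing $Y_{\mathrm{red}}=C_0\cup F_1'\cup\dots\cup F_4'$, the curves $F_i'$ are pairwise disjoint and $C_0$ meets each $F_i'$ in a single point, the four such points being distinct; since $D\cdot C_0\ge 0$ and $D\cdot F_i'\ge 0$, a general fibre $D_0$ of $\overline g$ meets $C_0$ in $D\cdot C_0$ distinct points and each $F_i'$ in $D\cdot F_i'$ distinct points, all of these $D\cdot C_0+\sum_i D\cdot F_i'$ points being distinct. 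For the restriction of $\overline g$ to $M_1^1(\boldsymbol{\kappa})=S_{\boldsymbol{\kappa}}\setminus Y$ to have fibres isomorphic to $\mathbb A^1$ the set $D_0\cap Y$ must be a single point, so $D\cdot C_0+\sum_i D\cdot F_i'=1$; combined with $2(D\cdot C_0)+\sum_i D\cdot F_i'=D\cdot Y=2$ this yields $D\cdot C_0=1$ and $D\cdot F_i'=0$ for $i=1,\dots,4$.

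The rest is a short computation. Writing $D=aC_0+bF+\sum_i(m_i^+E_i^++m_i^-E_i^-)$ with integer coefficients, the conditions $D\cdot F=1$, $D\cdot C_0=1$, $D\cdot F_i'=0$ give $a=1$, $b=3$ and $m_i^++m_i^-=-1$, while $D^2=0$ reads $\sum_{i=1}^4\bigl((m_i^+)^2+(m_i^-)^2\bigr)=4$. Since $m_i^++m_i^-=-1$ each summand is a positive integer, and under that constraint $(m_i^+)^2+(m_i^-)^2$ takes the value $1$ exactly when $\{m_i^+,m_i^-\}=\{0,-1\}$ and is at least $5$ otherwise; hence each summand equals $1$ and
\[
D=C_0+3F-\sum_{i=1}^4 E_i^{\sigma_i},\qquad\sigma_i\in\{+,-\},
\]
one of $2^4=16$ classes. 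Conversely $D_{(-,-,-,-)}=C_0+3F-\sum_i E_i^-=C_1+F-\sum_iE_i^-=L$ is the fibre class of the $Q$-fibration $\pi_{2,\boldsymbol{\kappa}}$, which is a $\pp^1$-fibration restricting to $\Phi$ on $M_1^1(\boldsymbol{\kappa})$ (Proposition~\ref{prop:geometry}) and strongly transversal to the $q$-fibration (Theorem~\ref{transverse}); the other fifteen classes are obtained by composing with the involutions $s_i$, which interchange $b_i^+\leftrightarrow b_i^-$ and hence $E_i^+\leftrightarrow E_i^-$ while fixing $C_0$, $F$ and $E_j^\pm$ ($j\ne i$), and which are biregular on the Kostov-generic fibres, so each $D_{\boldsymbol{\sigma}}$ is realised by exactly one of the $16$ parabolic fibrations of the preceding discussion. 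Since $h^0(D_{\boldsymbol{\sigma}})=2$ (Riemann--Roch with $h^1=h^2=0$, or by transporting the statement for $L$ along the $s_i$), the fibration is determined by its fibre class, and the two directions together give the assertion.

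The step I expect to be the main obstacle is the passage in the second paragraph from ``$\mathbb A^1$-fibration'' to the numerical equalities $D\cdot C_0=1$, $D\cdot F_i'=0$: it relies on the precise shape of the anticanonical boundary $Y$ and on a general-position (Bertini) argument ensuring that a general fibre meets $Y_{\mathrm{red}}$ in exactly $D\cdot C_0+\sum_iD\cdot F_i'$ distinct points, with no unexpected tangencies or passages through the nodes $C_0\cap F_i'$. Once this is in place the Diophantine analysis is immediate, and the fact that the $16$ listed fibrations genuinely satisfy all the required properties is already essentially contained in Sections~\ref{sec-transverse} and~\ref{sec-okamoto}.
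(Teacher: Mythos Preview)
Your proof is correct and follows essentially the same route as the paper's: both translate the hypotheses into the conditions $D\cdot F=1$, $D\cdot Y_{\mathrm{red}}=1$, $D^2=0$ on the Picard lattice of $S_{\boldsymbol{\kappa}}$ and then solve. The only genuine difference is in how you pin down $D\cdot C_0$ and the $D\cdot F_i'$ individually: you invoke adjunction to get $D\cdot Y=2$ and combine it with $D\cdot Y_{\mathrm{red}}=1$, whereas the paper instead uses nefness against the exceptional curves $E_i^\pm$ to force $0\le a_i,b_i$ and $a_i+b_i\le 1$ before running the count. Your adjunction trick is a pleasant shortcut that avoids those extra nefness checks; the paper's version is slightly more pedestrian but also slightly more self-contained, since it does not need the Bertini-type remark about general fibres avoiding the nodes of $Y_{\mathrm{red}}$.
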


\begin{proof} Recall that the moduli space $M^1_1(\boldsymbol{\kappa})$ can be obtained by removing $Y_{red} = C_0 + F'_1+F'_2+F'_3+F'_4$ 
from the compactification 
$\overline{M^1_1(\boldsymbol{\kappa})}$ (see Figure \ref{fig:b-hirzebruch}). Moreover $\overline{M^1_1(\boldsymbol{\kappa})}$ is obtained by $8$ blowing ups at $\{ b_i^{\pm} \}_{1 \leq i \leq 4}$ of $\Sigma_2 
\rightarrow \pp^1$ and the $q$-fibration $\pi_{1,\boldsymbol{\kappa}}:\overline{M^1_1(\boldsymbol{\kappa})} 
\rightarrow \pp^1$ is obtained by the linear system $|F|$. 
Let $L'$ be the divisor class of a general fiber of a fibration 
strongly transversal to the $q$-fibration. Since $L' \cdot F =1$ and the linear system $|L'|$ is base point free by the assumption, 
one can see that $L'$ can
be written as  
$$
L'= C_1 + n F - \sum_{i=1}^{4} a_i E_i^{+} - \sum_{i=1}^{4} b_i E_i^{-}, \ \  n \geq 0.  
$$
Note that $F \sim F'_i + E_i^{+} + E_i^{-}$ and $F'_i \cdot E_i^{\pm} = 1$.  Since $L'$ is numerically effective and 
$F \cdot F'_i = F \cdot E_i^{\pm} = C_1 \cdot E_i^{\pm}=0$,  $C_1 \cdot F = C_1 \cdot F'_i = 1$, $(E_i^{\pm})^2 = -1$, we see that 
$$
L' \cdot F'_i = 1  - (a_i + b_i) \geq 0, \ \ 
L' \cdot E_i^{+} = a_i \geq 0, \ \ L' \cdot E_i^{-} = b_i \geq 0.  
$$
Hence $0\leq a_i,b_i \leq 1, 0 \leq a_i + b_i \leq 1$, or $(a_i, b_i) 
= (1, 0), (0,1), (0,0)$.   

On the other hand, since the general fiber  of such a fibration 
$ \overline{M^1_1(\boldsymbol{\kappa})} \rightarrow \Pp$ is $\pp^1$, 
if we require the restriction of this fibration 
to $M^1_1(\boldsymbol{\kappa})$ to be an affine $\mathbb A^1$-fibration, 
we see that $L' \cdot Y_{red} = 1$.  
Since $C_1 \cdot C_0=0$, $C_0 \cdot E_i^{\pm} =0$, we see that $ L' \cdot C_0= n F \cdot C_0 = n$.  Then again by  $L' \cdot F'_i = 1  - (a_i + b_i) \geq 0$, 
\begin{equation}
1 = L' \cdot Y_{red} = L' \cdot (C_0 + F'_1+F'_2+F'_3+F'_4) = n 
+ \sum_{i=1}^4(1-(a_i + b_i))= n+4 -\sum_{i=1}^{4}(a_i + b_i)  
\end{equation}
Hence $ \sum_{i=1}^4(a_i + b_i)= n+3$. Note that $ \sum_{i=1}^4(a_i^2 + b_i^2)= \sum_{i=1}^4 (a_i + b_i) = n+3 $.  
Moreover $L'^2 = 0$ implies that
$$
0  = C_1^2 + 2 n C_1 \cdot F - \sum_{i=1}^{4} (a_i^2 + b_i^2) =   
2+ 2n - \sum_{i=1}^{4}  (a_i^2 + b_i^2) = 2n+2 - n-3 =n-1.  
$$
Hence we have $n=1$ and $(a_i, b_i) = (1, 0)$ or $(0, 1)$  for all $i$, $1 \leq i \leq 4$. 
For each choice of $\boldsymbol{\sigma}=(\sigma_1, \sigma_2, \sigma_3, \sigma_4) \in \{+, -\}^{4}$, we 
can consider the divisor class 
$$
L^{\boldsymbol{\sigma}} = C_1+F - E_1^{\sigma_1} - E_2^{\sigma_2}- E_3^{\sigma_3}-E_4^{\sigma_4}.
$$
Then as in (\ref{eq:parafiber}), the linear system 
$|L^{\boldsymbol{\sigma}}|$ defines a morphism 
$
\overline{M^1_1(\boldsymbol{\kappa})} \rightarrow \pp^1
$
which gives an affine $\mathbb A^1$-fibrations on 
$M^1_1(\boldsymbol{\kappa}) = \overline{M^1_1(\boldsymbol{\kappa})} \setminus Y_{red} \rightarrow \pp^1$ 
which is strongly transversal to the $q$-fibration.  
We obtain $16$ different fibrations associated to the 
linear systems $|L^{\boldsymbol{\sigma}}|$ and   
the above consideration shows that the linear systems $|L^{\boldsymbol{\sigma}}|$ are 
the only possible strongly 
transversal fibrations which give affine $\mathbb A^1$-fibrations 
on $M^1_1(\boldsymbol{\kappa})$.  
\end{proof}

Now, for any B\"{a}cklund transformation $s$, one can consider
the fibration defined by $q\circ s$. Since $s$ is biregular in restriction to 
$M^1_1(\boldsymbol{\kappa})$, the resulting $(q\circ s)$-fibration
is again an $\mathbb A^1$-fibration over $\Pp$. We can now prove

\begin{corollary}Among all $\mathbb A^1$-fibration of the form 
$q\circ s$,
only the $16$ ones above are transversal to $q$.
\end{corollary}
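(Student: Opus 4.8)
The plan is to deduce the corollary from the Proposition just proved, which already identifies the $16$ fibrations $|L^{\boldsymbol{\sigma}}|$ as the \emph{only} $\mathbb{A}^1$-fibrations on $M^1_1(\boldsymbol{\kappa})$ that are strongly transversal to the $q$-fibration and at the same time extend to $\pp^1$-fibrations on $\overline{M^1_1(\boldsymbol{\kappa})}$. So I would argue two points. First, that each of the $16$ is of the form $q\circ s$: this is just a matter of recording identities already obtained, namely $Q=q\circ s_0$ from \eqref{eq:q-Q}, together with the observation that switching the parabolic structure over a subset $I\subseteq\{1,2,3,4\}$ of the points replaces $Q$ by $Q\circ\prod_{i\in I}s_i$; hence each parabolic fibration is $q\circ s$ with $s=s_0\prod_{i\in I}s_i$ in the B\"{a}cklund group, and by the Proposition it is (strongly) transversal to $q$. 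Second, and this is the substance, that conversely \emph{every} fibration $q\circ s$ automatically compactifies as a $\pp^1$-fibration, so that once it is assumed transversal to $q$ it satisfies all the hypotheses of the Proposition and must be one of the $16$.

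For the second point I would write a B\"{a}cklund transformation as an isomorphism $s\colon M^1_1(\boldsymbol{\kappa})\to M^1_1(\boldsymbol{\kappa}')$, so that $q\circ s=\pi_{1,\boldsymbol{\kappa}'}\circ s$. The key input is that $s$ extends to a biregular isomorphism $\overline{M^1_1(\boldsymbol{\kappa})}\to\overline{M^1_1(\boldsymbol{\kappa}')}$ of the compactifications of \cite{InabaIwasakiSaito2}. This is immediate for the pole permutations $r_{\sigma}$; for the $s_i$ and the Schlesinger transformations it is part of the construction of the compactified family, since these arise from geometric operations on the underlying parabolic connections; and for the Okamoto involution $s_0$ it is visible from the $\pp^1\times\pp^1$ model discussed above, in which $s_0$ acts by $(x,y)\mapsto(y,x)$ while carrying the four diagonal blow-up points and the four infinitesimal directions attached to $\boldsymbol{\kappa}$ (the slopes $1+\kappa_0/\kappa_i$) onto those attached to $\boldsymbol{\kappa}'$ (the slopes $\kappa_i/(\kappa_i+\kappa_0)$). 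Granting this, $q\circ s$ is the composite of a biregular isomorphism with the morphism $\pi_{1,\boldsymbol{\kappa}'}$ defined by the linear system $|F|$, hence a morphism $\overline{M^1_1(\boldsymbol{\kappa})}\to\pp^1$ whose general fibre is the preimage of a general fibre $F$, i.e.\ a $\pp^1$; so $q\circ s$ compactifies as a $\pp^1$-fibration. If in addition $q\circ s$ is transversal (in the strong sense of the preceding Proposition) to the $q$-fibration, the Proposition forces it to be one of the $|L^{\boldsymbol{\sigma}}|$, and combining with the first point yields the corollary.

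The main obstacle is precisely the extension statement for $s_0$: unlike $r_\sigma$, the $s_i$ and the Schlesinger transformations, $s_0$ does not preserve the geometric nature of the connection (it exchanges finite and infinite monodromy, reducible and irreducible, $SL(2,\rr)$ and $SU(2)$), so its regularity on the \emph{compactified} moduli space is not a formal consequence of the earlier constructions and has to be read off from the explicit $\pp^1\times\pp^1$ picture, checking that the eight blow-up centres for $\boldsymbol{\kappa}$ are matched with those for $\boldsymbol{\kappa}'$. Once that bookkeeping is in place, the corollary is immediate from the already-established classification of strongly transversal $\pp^1$-fibrations.
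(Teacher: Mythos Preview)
Your approach is correct but differs from the paper's in a key respect. The core of your argument is that every B\"acklund transformation $s$ extends to a \emph{biregular} isomorphism $\overline{M^1_1(\boldsymbol{\kappa})}\to\overline{M^1_1(\boldsymbol{\kappa}')}$ between the compactified moduli spaces, from which it follows at once that $q\circ s$ extends to a $\pp^1$-fibration and the Proposition applies. The paper explicitly avoids this claim: it inserts the parenthetical warning that the generators ``are only birational on $\overline{M^1_1(\boldsymbol{\kappa})}$'', and instead verifies only the weaker fact that each generator $s_i$, $r_{(ij)(kl)}$ restricts to a biregular self-map of the single curve $C_0$ (the identity for $s_i$, a M\"obius permutation for $r_{(ij)(kl)}$). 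From this the paper concludes that $q\circ s|_{C_0}$ is one-to-one, hence the linear system of closures of fibers is base-point-free even at infinity, and then invokes the Proposition.

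Your stronger claim is in fact true---it is a standard feature of Okamoto--Painlev\'e pairs that the affine $D_4$ Weyl group acts by biregular isomorphisms between the surfaces $S_{\boldsymbol{\kappa}}$---and your verification for $s_0$ via the $\pp^1\times\pp^1$ model (checking that the swap $(x,y)\mapsto(y,x)$ carries the eight blow-up centres for $\boldsymbol{\kappa}$ to those for $\boldsymbol{\kappa}'$) is exactly right. So your argument goes through. What you gain is a cleaner reduction to the Proposition; what the paper gains is that one only computes the action on a single rational curve rather than tracking all eight blow-up centres through each generator. Note that the paper's list of generators omits $s_0$, but in the $\pp^1\times\pp^1$ picture $s_0$ fixes the diagonal pointwise and hence acts as the identity on $C_0$, which fills that apparent gap. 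You should also be aware that your phrase ``part of the construction of the compactified family'' for the $s_i$ and Schlesinger transformations is doing real work: the boundary $Y$ parametrizes $\phi$-connections with $\phi=0$, and one does need to check (as you implicitly assume) that these geometric operations extend there compatibly with the stability condition defining the compactification.
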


\begin{proof}One easily check that the generators $s_i$ and $r_{(ij)(kl)}$ for the B\"{a}cklund transformation group restrict as a biregular transformation of $C_0$ (mind that they are only birational on $\overline{M^1_1(\boldsymbol{\kappa})}$):
the identity for $s_i$ and a Moebius permutation for $r_{(ij)(kl)}$. 
As a consequence, $q\circ s:C_0\to\pp^1$ is $1:1$ and 
the linear system defined by the fibers of $q\circ s$ is base point free, even at infinity. We can apply the proposition above to conclude.
\end{proof}

\begin{remark}There are many $\mathbb A^1$-fibrations 
on $M^1_1(\boldsymbol{\kappa})$ that are transversal to the $q$-fibration. For instance, the $p$-fibration is like this, but its compactification is not base point free: the general fiber 
intersects $Y_{\rm red}$ exactly at $C_0\cap F_4'$.
The previous statement shows in particular that $p\not=q\circ s$ for any B\"{a}cklund transformation $s$.
One can also find examples of $\mathbb A^1$-fibrations transversal to $q$
having arbitrary high intersection
number at the base point at infinity.
However, all examples which are not 
of the form $q\circ s$ seem to have only $2$ special fibers,
not $4$ as happens with the $16$ ones of the statement.
\end{remark}

We end the section by the following

\begin{proposition}
\label{szaboquest}
For general $\boldsymbol{\kappa}$, parabolic and apparent fibrations (whatever the choice of stable 
and unstable zone) are always different.
\end{proposition}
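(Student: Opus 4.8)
The plan is to transport the comparison into the Picard group of the smooth projective surface $S_{\boldsymbol{\kappa}}\simeq\overline{M_1^1(\boldsymbol{\kappa})}$ of Section \ref{sec-okamoto}, and to combine the explicit formula \eqref{eq:okamoto} with the Kostov-genericity hypothesis on $\boldsymbol{\kappa}$. On $S_{\boldsymbol{\kappa}}$, the blow-up of $\Sigma_2$ at the eight points $b_i^{\pm}$, the apparent-singularity fibration attached to the $(a)$-unstable zone is the pencil $\pi_{1,\boldsymbol{\kappa}}=|F|$, while a parabolic fibration attached to a choice of eigendirections $\boldsymbol{\sigma}\in\{+,-\}^4$ is the pencil $|L^{\boldsymbol{\sigma}}|$ with $L^{\boldsymbol{\sigma}}=C_1+F-E_1^{\sigma_1}-E_2^{\sigma_2}-E_3^{\sigma_3}-E_4^{\sigma_4}$ (Proposition \ref{prop:geometry}, equation \eqref{eq:parafiber}, and the Proposition just above). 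Since $C_1\sim C_0+2F$ and the classes $C_0,F,E_1^{\pm},\dots,E_4^{\pm}$ are independent in $\operatorname{Pic}(S_{\boldsymbol{\kappa}})$, the class $F$ is linearly equivalent to no $L^{\boldsymbol{\sigma}}$ (and indeed $F\cdot L^{\boldsymbol{\sigma}}=1\neq 0$), so the apparent fibration differs from every parabolic fibration. Equivalently and more concretely, \eqref{eq:okamoto} reads $Q=q+\kappa_0/p$, and Kostov-genericity forces $\kappa_0\neq 0$ — if $\kappa_0=0$ then $\kappa_1+\kappa_2+\kappa_3+\kappa_4=1\in 2\zz+1$, which is excluded — so $q$ and $Q$ are distinct rational functions on $M_1^1(\boldsymbol{\kappa})$; the other parabolic fibrations are $Q\circ v$ with $v\in\langle s_1,\dots,s_4\rangle$ and differ from $q$ for the same reason.

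To see that this survives relabelling by any of the available (un)stable presentations, I would argue via B\"acklund transformations. Every fibration of apparent or parabolic type is of the form $q\circ s$ for a suitable element $s$ of the affine $D_4$ Weyl group acting on the family $\{S_{\boldsymbol{\kappa}}\}_{\boldsymbol{\kappa}}$, and two such fibrations coincide exactly when the two transformations differ by one preserving the $q$-fibration. The subgroup of B\"acklund transformations preserving the $q$-fibration contains the $s_i$ with $i\neq 0$ (which fix $q$) and the permutations $r_{(ij)(kl)}$ (which move $q$ by a M\"obius transformation of the base); it is a \emph{proper} subgroup, and the extra Okamoto involution $s_0$ lies outside it. A parabolic fibration is $q\circ s_0\,v\,r$ with $v\in\langle s_1,\dots,s_4\rangle$ and $r$ a permutation, so a coincidence with an apparent fibration $q\circ w$ forces $(s_0 v r)^{-1}w$, which genuinely involves $s_0$, to preserve the $q$-fibration; one checks that this happens only when $\boldsymbol{\kappa}$ lies on one of the reflection hyperplanes of the affine $D_4$ Weyl group, i.e.\ on a Kostov-special locus (the first two such hyperplanes are $\kappa_0=0$ and $\kappa_0=1$, visible in \eqref{eq:okamoto} and in the formula for $Q'$ above). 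A Kostov-generic $\boldsymbol{\kappa}$ avoids all of them.

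The step I expect to demand the most care is the last assertion, namely that $(s_0 v r)^{-1}w$ cannot preserve the $q$-fibration for generic $\boldsymbol{\kappa}$. The cleanest route is to compute, in $\operatorname{Pic}(S_{\boldsymbol{\kappa}})$, the general-fibre class $s^{*}F$ of each fibration and to compare the resulting lattice classes together with the $\boldsymbol{\kappa}$-dependence of the identifications $S_{\boldsymbol{\kappa}}\simeq\overline{M_1^1(\boldsymbol{\kappa})}$; the only subtlety is that the affine-Weyl action on $\operatorname{Pic}$ involves translations by the anticanonical class $Y=-K_{S_{\boldsymbol{\kappa}}}=2C_0+F'_1+F'_2+F'_3+F'_4$, so the fibre classes must be tracked modulo $\zz Y$. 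This is the usual $D_4^{(1)}$-bookkeeping of an Okamoto--Painlev\'e pair, and it is precisely where the genericity hypothesis enters; granting it, the proposition follows because, by its very construction, the extra Okamoto generator $s_0$ (which mixes the geometric nature of the connection, exchanging finite and infinite monodromy) never belongs to the symmetry group of the $q$-fibration for Kostov-generic $\boldsymbol{\kappa}$.
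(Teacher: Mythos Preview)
Your first paragraph is correct but covers only one of the eight apparent fibrations: comparing the classes $F$ and $L^{\boldsymbol\sigma}$ in $\operatorname{Pic}(S_{\boldsymbol\kappa})$, or invoking $Q=q+\kappa_0/p$ with $\kappa_0\neq 0$, shows that the $(a)$-apparent fibration differs from each of the sixteen parabolic ones. The other seven apparent fibrations are obtained from $q$ by Schlesinger transformations, and these, when written in the generators, \emph{do} involve $s_0$ (the paper's own example is $r_{(12)(34)}s_0s_1s_2s_0$). So the dichotomy ``apparent fibrations avoid $s_0$, parabolic ones use it once'' is not the right invariant; what matters is the coset of the Schlesinger subgroup, and you have not shown that the stabiliser of the $q$-fibration lies inside it. You acknowledge this yourself in the third paragraph.

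The paper fills exactly this gap, but by a route you do not mention: it uses the \emph{isomonodromic} interpretation. If a B\"acklund transformation $s$ commutes with $q$, then along an isomonodromic deformation the function $q(t)$ satisfies Painlev\'e~VI simultaneously for the two parameter values $\boldsymbol\kappa$ and $\boldsymbol\kappa'=\boldsymbol\kappa\cdot s$; subtracting the two equations yields an \emph{algebraic} relation between $q$ and $t$ whose coefficients are the differences $\kappa_i^2-(\kappa_i')^2$. Since a generic Painlev\'e transcendent satisfies no such relation, one gets $\kappa_i'=\pm\kappa_i$ for all $i$, and faithfulness of the affine action on $\boldsymbol\kappa$ then forces $s\in\langle s_1,\dots,s_4\rangle$, which is Schlesinger; as $s_0$ is not, the proposition follows. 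Your proposed $D_4^{(1)}$-on-$\operatorname{Pic}$ route may be workable in principle, but since B\"acklund transformations are only birational on $\overline{M_1^1(\boldsymbol\kappa)}$ (see the Remark just before the proposition: $q\circ s$ need not extend to a base-point-free pencil), the bookkeeping you sketch is genuinely delicate and would have to rule out infinitely many affine-Weyl elements, not just $s_0$.
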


This answer a question raised by S. Szabo.

\begin{proof}
For general $\boldsymbol{\kappa}$, parabolic and apparent fibrations (whatever the choice of stable 
and unstable zone) are always different. Indeed, we explained how these fibrations are related respectively 
to $Q$ and $q$ by Schlesinger transformation (by switching parabolic structure or by applying elementary 
transformations on the connection). We have also shown that $Q$ is related to $q$ by composition by a B\"acklund transformation, namely $s_0$. However, they are not related by a Schlesinger transformation for general $\boldsymbol{\kappa}$. First of all, $s_0$ is not a Schlesinger transformation (the monodromy is not preserved).
Now any B\"acklund transformation exchanging $Q$ and $q$ must be a composite of $s_0$ 
with a B\"acklund transformation commuting with $q$. But we claim that the later one must be element
of the $16$-order group generated by the $s_i$, $i=1,2,3,4$, thus a Schlesinger transformation,
proving the proposition.

Although it might be well-known, the claim can be easily proved as follows. 
Along isomonodromic deformation of a connection, $q(t)$ is solution (as a function 
of the deformation parameter $t$) of the Painlevé VI equation. If a B\"acklund transformation $s$ commutes with $q$, 
then $q(t)$ is a common solution of the Painlev\'e VI equation for two parameters
$\boldsymbol{\kappa}$ and $\boldsymbol{\kappa}'=\boldsymbol{\kappa}\circ s$ (B\"acklund transformation
are symetries of the Painlev\'e equation). Writing down the difference of the two 
Painlev\'e equations, we get the following algebraic relation (not differential anymore)
$$
\left(\frac{\kappa_\infty'^2-\kappa_\infty^2}{2}-\frac{\kappa_0'^2-\kappa_0^2}{2}\frac{t}{q^2}
+\frac{\kappa_1'^2-\kappa_1^2}{2}\frac{t-1}{(q-1)^2}+\frac{\kappa_t^2-\kappa_t'^2}{2}\frac{t(t-1)}{(q-t)^2}\right)=0
$$
For a general connection, $q(t)$ is transcendental and we promtly get $\kappa_i'=\pm\kappa_i$, $i=1,2,3,4$.
Up to the $16$-order group above, we can assume that the action of $s$ on $\boldsymbol{\kappa}$ is trivial.
However, it is well-known that the affine action of the group of B\"acklund transformations on $\boldsymbol{\kappa}$
parameters is faithfull.
\end{proof}

\section{Middle convolution interpretation}
\label{sec-mc}

We point out here a possible explanation for the existence of a symmetry
interchanging the two fibrations, in terms of
Katz's middle convolution operation. When applied to local  systems of
rank $2$ on $\pp ^1$ with $4$ singular points having non-resonant local 
monodromy, the middle convolution operator gives back a new rank $2$ system
with the same $4$ singularities. However, the monodromy transformations are
changed. 

Consider local systems $L$ with
monodromy eigenvalues at $t_i$ denoted by
$f^{\pm}_i$. This notation, intended to coincide with the notation
in the previous parts of the paper, is a first choice of ordering.
Notice however 
that because the elementary transformations interchange
$f^+_i$ and $f^-_i$ this choice isn't a big constraint.

The middle convolution operation depends on a choice
of local system $\beta$ over $\pp ^1\times \pp ^1$ with singularities
on four  horizontal lines, four vertical lines, and the diagonal.
It is given by the $8$ monodromy transformations $(a_1,a_2,a_3,a_4,b_1,b_2,b_3,b_4)$
with $a_i$ corresponding to the horizontal lines and $b_i$ to the
vertical lines. These are subject to the relation
$$
a_1a_2a_3a_4 = b_1b_2b_3b_4
$$
both products being the inverse of the monodromy on the diagonal. 

Now given a rank $2$ local system $L$ whose local monodromy transformations have
eigenvalues denoted $f_i$ and $f'_i$, assume they are nonspecial. 
This change of notation is there because $f_i$ could be either one of
$f_i^+$ or $f_i^-$, in which case $f'_i$ is the other one
(see Remark \ref{badchoice}).
In order to have a convolution
with rank as small as possible, $a_i$ should be the inverse of one
of the eigenvalues; assume that it is 
$$
a_i= (f_i)^{-1}. 
$$

\begin{lemma}
\label{katzmon}
With the above notations,  the middle convolution of $L$ with
$\beta$ is a local system ${\bf mc}_{\beta}(L)$ of rank $2$ with local monodromy eigenvalues
$$
b_i\;\;\;\; \mbox{   and   } \;\;\;\; 
b_if'_i(f_1f_2f_3f_4)/f_i .
$$
\end{lemma}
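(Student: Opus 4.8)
The plan is to reduce the computation to the known effect of Katz's middle convolution on the local monodromy of a local system on $\pp^1$ minus points, with the extra twisting parameter $\beta$ handled by first applying an ordinary scalar twist. Concretely, I would proceed as follows. Write $\chi$ for the rank one local system on $\pp^1-\{t_1,t_2,t_3,t_4\}$ whose monodromy at $t_i$ is the scalar $\lambda_i$ (to be chosen), and recall the classical formula (Katz, Dettweiler--Reiter): for $\mathbf{mc}_\mu$ the middle convolution with the Kummer sheaf $\mathcal{L}_\mu$, applied to a rank $N$ local system whose monodromy at $t_i$ has eigenvalue list including possibly the value $\mu^{-1}$, the new monodromy at $t_i$ has eigenvalues $\mu\cdot(\text{the old ones that are not }\mu^{-1})$ together with a single eigenvalue $\mu^{m}\cdot\mu^\infty$ where $\mu^\infty$ is the eigenvalue at infinity and $m$ counts multiplicities; in the rank two, non-resonant, four-point case this collapses to: the $a_i=f_i^{-1}$ eigenspace is killed, the surviving eigenvalue $f'_i$ is multiplied by the convolution parameter, and a new ``extra'' eigenvalue is created governed by the product of all the local data.

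The first step is to set up the normalization: the $\beta$-twisted middle convolution $\mathbf{mc}_\beta$ with the $8$ parameters $(a_1,\dots,a_4,b_1,\dots,b_4)$ subject to $a_1a_2a_3a_4=b_1b_2b_3b_4$ can be written as $\chi' \otimes \mathbf{mc}_\mu(\chi\otimes L)$ for suitable rank one $\chi,\chi'$ and scalar $\mu$, where $\mu$ is essentially $a_1a_2a_3a_4=b_1b_2b_3b_4$ (the inverse of the diagonal monodromy). I would identify $\mu$ and the twists $\chi,\chi'$ by matching: the condition $a_i=f_i^{-1}$ must become, after the twist $\chi\otimes L$, the statement that one eigenvalue at $t_i$ equals $\mu^{-1}$; this forces the monodromy of $\chi$ at $t_i$ to be $\mu f_i$ (so that $\mu f_i \cdot f_i^{-1}=\mu$... adjusting so the killed eigenvalue is $\mu^{-1}$), and then $\chi'$ at $t_i$ is read off from requiring the output eigenvalue to be $b_i$. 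This is pure bookkeeping with the relation $a_1a_2a_3a_4=b_1b_2b_3b_4$ ensuring consistency at infinity.

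The second step is to apply the classical monodromy formula for $\mathbf{mc}_\mu$ to the twisted system $\chi\otimes L$: the surviving eigenvalue at $t_i$ (the one coming from $f'_i$) gets multiplied by $\mu$ and then by $\chi'$, and one checks this equals $b_i f'_i (f_1f_2f_3f_4)/f_i$ after substituting the expressions for $\mu$ and the twists in terms of the $f$'s and $b$'s. The newly created eigenvalue at $t_i$ equals $b_i$ after the twist by $\chi'$ — this is the ``generic'' new eigenvalue of middle convolution, and it should come out to exactly $b_i$ by the way $\chi'$ was chosen. Finally I would verify the rank: because $L$ has rank $2$, exactly one eigenvalue at each $t_i$ is $\mu^{-1}$ after twisting (by non-speciality the eigenvalues are distinct), so the naive convolution has rank $1+1=2$ at each puncture and equals the middle convolution (no quotient or sub is forced off), confirming $\mathbf{mc}_\beta(L)$ has rank $2$.

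The main obstacle I expect is the bookkeeping in the first step: correctly distributing the $8$ parameters $(a_i,b_i)$ between the scalar pre-twist $\chi$, the middle convolution parameter $\mu$, and the scalar post-twist $\chi'$, while respecting the single relation $a_1a_2a_3a_4=b_1b_2b_3b_4$ and the convention $a_i=f_i^{-1}$. Once that dictionary is pinned down the rest is substitution into the standard formula. I would also need to be slightly careful that ``eigenvalue of the monodromy at $t_i$'' is taken in a consistent orientation (clockwise vs.\ counterclockwise), since the formula of Lemma \ref{katzmon} and the classical one must use the same convention; this is the kind of sign (or inversion) subtlety that, if mishandled, produces $f_i$ where one wants $f_i^{-1}$, so I would fix the convention at the outset to match the statement.
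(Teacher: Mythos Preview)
Your approach is sound in outline but differs from the paper's. The paper does not decompose $\mathbf{mc}_\beta$ as $\chi'\otimes\mathbf{mc}_\mu(\chi\otimes L)$ and then invoke the classical Katz/Dettweiler--Reiter formula; instead it works directly in the framework of \cite{kmca}, where the twisted middle convolution with a rank one system $\beta$ on $\pp^1\times\pp^1$ (singular along the horizontals, verticals, and the diagonal) is already packaged with its own local transformation formula. Concretely, the paper writes $\beta^{H_i}=a_i$, $\beta^{V_i}=b_i$, $\beta^T=(a_1a_2a_3a_4)^{-1}=f_1f_2f_3f_4$, computes the auxiliary quantities $\beta^{U_i}=\beta^{H_i}\beta^{V_i}\beta^T=b_i(f_1f_2f_3f_4)/f_i$, checks that the defect $\delta(\beta,\overset{\rightharpoonup}{g})=(n-2)r-\sum_i m_i((\beta^{H_i})^{-1})=4-4=0$ (so the rank stays at $2$), and then reads off the local Katz transformation $\kappa_i(\beta,\overset{\rightharpoonup}{g})=[\beta^{V_i}]+[f'_i\beta^{U_i}]$, which is exactly the claimed pair of eigenvalues.

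What your route buys is self-containment: you reduce to the one-parameter $\mathbf{mc}_\mu$, for which many readers already know the eigenvalue rule, at the cost of having to solve for $\chi$, $\chi'$, and $\mu$ in terms of the $a_i$ and $b_i$ and then track everything through. The paper's route is shorter because the bookkeeping has already been absorbed into the formulas of \cite{kmca}. One caution on your rank check: the sentence ``the naive convolution has rank $1+1=2$ at each puncture'' is not the right computation. The rank of the middle convolution is a global quantity (an Euler-characteristic or defect count), not something read off puncture by puncture; in your setup it comes out to $2$ because the defect vanishes, i.e.\ $(n-2)r-\sum_i m_i(\mu^{-1})=4-4=0$ after the twist by $\chi$ makes exactly one eigenvalue at each $t_i$ equal to $\mu^{-1}$. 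You should replace the local heuristic with that global computation.
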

\begin{proof}
There are by now a large number of possible references
for the middle convolution operation. For the authors' convenience
we follow the notations of \cite{kmca}. 
The local monodromy transformations
fit into the vector denoted $\overset{\rightharpoonup}{g}$
with components
$$
\overset{\rightharpoonup}{g}_i = [f_i]+[f'_i].
$$ 
The multiplicities are 
$m_i(f_i)=m_i(f'_i)=1$. In the notations of \cite{kmca}
we have $a_i=\beta ^{H_i} = f_i^{-1}$,
$b_i = \beta ^{V_i}$, and 
$$
\beta ^T = (a_1a_2a_3a_4)^{-1} = (b_1b_2b_3b_4)^{-1} = f_1f_2f_3f_4.
$$
Then 
$m_i(f_i)=m_i((\beta ^{H_i})^{-1}) =1$. 
The components corresponding to exceptional curves on a blow-up of $\pp ^1\times \pp ^1$
are 
$$
\beta ^{U_i} := \beta ^{H_i}\beta ^{V_i}\beta ^T = b_i(f_1f_2f_3f_4)/f_i .
$$
The number of points is $n=4$ and the initial rank is $r=2$ so the defect is
$$
\delta (\beta , \overset{\rightharpoonup}{g}) = 
(n-2)r -\sum _{i=1}^4 m_i ((\beta ^{H_i})^{-1}) = 0.
$$
This simplifies the formula for the local Katz transformation 
$$
\kappa _i(\beta , \overset{\rightharpoonup}{g})=
[\beta ^{V_i}] + [f'_i\beta ^{U_i}].
$$
In other words, the monodromy eigenvalues are $b_i$ and 
with
$$
\beta ^{U_i}f'_i =  b_if'_i(f_1f_2f_3f_4)/f_i.
$$
\end{proof}

We would like to investigate what this does to the stable and unstable zones,
in the case of finite-order local monodromy where the parabolic weights are
the same as the rational residues of the connection, which are in turn the
angular arguments of the monodromy eigenvalues. 

\begin{proposition}
Fix finite order local monodromy transformations corresponding
to residues ${\bf r}$ and parabolic weights $\alpha$. Assume that they
are nonspecial. Let ${\bf r}'$ and $\alpha '$ denote
the corresponding values after middle convolution discussed in the previous lemma.
The middle convolution operation extends to an operation on the full Hodge moduli
stack of $\alpha$-semistable $\lambda$-connections,
$$
{\bf mc}_{\beta} : \Mm ^{d,\alpha} ({\bf r})\rightarrow 
\Mm ^{d,\alpha '} ({\bf r}').
$$
It preserves the action of $\Gm$, hence preserves the operation $\lim _{u\rightarrow 0}u()$. 
\end{proposition}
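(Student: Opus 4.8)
The plan is to realise ${\bf mc}_{\beta}$ as a middle (intermediate) relative de Rham direct image, in a form valid for logarithmic $\lambda$-connections for all $\lambda$ at once, and then to read off the stated properties. Work on $X\times X$ with the two projections $p_1,p_2$, let $\Delta$ be the diagonal, and let $\mathcal{K}_{\lambda}$ be the rank one logarithmic $\lambda$-connection on $X\times X$ with polar divisor inside $(D\times X)\cup(X\times D)\cup\Delta$ whose residues are, at $\lambda=1$, the arguments of the data $(a_i,b_i)$ of the kernel $\beta$ ($a_i$ along $\{t_i\}\times X$, $b_i$ along $X\times\{t_i\}$), and, for general $\lambda$, those residues multiplied by $\lambda$; thus $\mathcal{K}_{u\lambda}$ is obtained from $\mathcal{K}_{\lambda}$ by the rescaling $u:(\lambda,\nabla)\mapsto(u\lambda,u\nabla)$. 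Given $(\lambda,E,\nabla,P_{\hdot})$, I would form $p_1^{\ast}(E,\nabla)\otimes\mathcal{K}_{\lambda}$, view it as a family of logarithmic $\lambda$-connections along the fibres of $p_2$, and define ${\bf mc}_{\beta}(E,\nabla,P_{\hdot})$ to be the middle part (in the sense of Katz) of the first relative $\lambda$-de Rham cohomology $R^1p_{2\ast}$, with its induced $\lambda$-connection, and parabolic structure recovered from the (semisimple) residues when $\lambda\neq0$ and from the de Rham filtration when $\lambda=0$. At $\lambda=1$ this is Katz's middle convolution in the form of \cite{kmca}; the same recipe in the $\mathcal{R}_{\lambda}\mathcal{D}$-module formalism (Rees algebra of $\mathcal{D}_X$) --- or, analytically, via the middle convolution of (mixed) twistor $\mathcal{D}$-modules --- gives it for all $\lambda$ and over an arbitrary base $S$, hence as a morphism of stacks, \emph{provided} the relative $R^1$ commutes with base change.

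Granting that, the output data are read off fibrewise. Its residues are computed exactly as in the proof of Lemma \ref{katzmon}, giving ${\bf r}'$ at $\lambda=1$; the defect is $\delta=0$, so the rank stays $2$, and since both the residues of $(E,\nabla)$ and those of $\mathcal{K}_{\lambda}$ scale linearly in $\lambda$, the output residues trace out the section $\lambda\mapsto\lambda{\bf r}'$, while the Fuchs relation \eqref{fuchsrel} pins down the new degree $d'$. Thus at the level of numerical invariants ${\bf mc}_{\beta}$ is a morphism $\Mm^{d}(\lambda{\bf r})\to\Mm^{d'}(\lambda{\bf r}')$ over $\aaa^1$, with the weights $\alpha$ turning into a ($\lambda$-independent) set $\alpha'$ in the root-stack picture of Propositions \ref{categories}--\ref{lamcon}. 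Equivariance under $\Gm$ is then formal: $u:(\lambda,E,\nabla,P_{\hdot})\mapsto(u\lambda,E,u\nabla,P_{\hdot})$ is matched on the kernel by $\mathcal{K}_{\lambda}\mapsto\mathcal{K}_{u\lambda}$, and forming $p_1^{\ast}(-)\otimes(-)$ followed by the middle $R^1p_{2\ast}$ commutes with rescaling the relative $\lambda$-connection by $u$; hence ${\bf mc}_{\beta}\circ u=u\circ{\bf mc}_{\beta}$, and a $\Gm$-equivariant morphism of Hodge moduli stacks automatically intertwines the two operations $\lim_{u\to0}u(\,)$.

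It remains to check that the output lies in the $\alpha'$-\emph{semistable} locus. For $\lambda\neq0$ this is automatic: ${\bf r}'$ is again Kostov-generic (a short check on the $b_i$ and on the product $f_1f_2f_3f_4$, using that ${\bf r}$ is nonspecial), so by Lemma \ref{kgeneric} every $\lambda$-connection with residues ${\bf r}'$ is irreducible, hence has no invariant rank one subbundle and is $\alpha'$-stable for any nonspecial weights. For $\lambda=0$ one specialises inside the Hodge family $\Mm^{d',\alpha'}(\lambda{\bf r}')\to\aaa^1$: a point of the fibre over $\lambda=0$ in the image of ${\bf mc}_{\beta}$ is, by the equivariance just obtained, the $u\to0$ limit of ${\bf mc}_{\beta}$ applied to $\alpha'$-stable $\lambda$-connections with $\lambda\neq0$, and by Proposition \ref{higgslimit} that limit exists and is $\alpha'$-stable; uniqueness of the limit identifies it with the middle-convolution direct image, so ${\bf mc}_{\beta}$ indeed lands in $\Mm^{d',\alpha'}_0(0)$.

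The main obstacle --- and the reason this is not yet a complete proof in full generality --- is exactly the base-change behaviour of $R^1p_{2\ast}$ over the $\lambda$-line (and over an auxiliary base $S$), in particular its compatibility with specialisation to $\lambda=0$ together with the $\Gm$-action; this is the ``compatibility of higher direct images'' referred to in the introduction. On the open locus where the relevant $\lambda$-de Rham cohomology sheaves are locally free of the expected rank, ordinary cohomology-and-base-change applies and the construction above is rigorous --- enough to compare the two fibrations on a dense open set in the Dolbeault limit. The general statement requires controlling the loci where the rank of $R^1$ jumps, and checking that the ``middle'' truncation specialises correctly to $\lambda=0$, where it must become the appropriate strict parabolic quotient of Higgs sheaves. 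Passing, via Propositions \ref{categories}--\ref{lamcon}, to $C_n$-equivariant objects on the cyclic cover $Y$ does not help, since middle convolution does not commute in any obvious way with the ramified covering $Y\to X$.
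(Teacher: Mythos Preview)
The paper does not actually prove this proposition. Immediately after the statement it says: ``We don't do the proof here. One should be able to show that stability is preserved by saying that direct image preserves harmonic bundles. This is the subject of work in progress by the third author with R.~Donagi and T.~Pantev; however it should also be a consequence of Sabbah's theory of twistor $\mathcal{D}$-modules. Nonetheless this proposition will be used in the following discussion, meaning that the remainder of the paper is for the moment heuristic.'' So there is nothing to compare your argument against; you are attempting to fill a gap the authors explicitly leave open.

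Your sketch is along reasonable lines and is close in spirit to what the paper suggests via Sabbah's twistor $\mathcal{D}$-modules. You also correctly isolate the real difficulty: base change for the middle $R^1p_{2\ast}$ across $\lambda=0$. One point worth flagging is that your argument for $\alpha'$-stability at $\lambda=0$ is somewhat circular as written. You argue that the image Higgs bundle is the $u\to 0$ limit of ${\bf mc}_{\beta}$ applied to nearby $\lambda\neq 0$ points, and hence $\alpha'$-stable by Proposition~\ref{higgslimit}. But identifying the direct-image Higgs bundle at $\lambda=0$ with that limit is precisely the base-change/specialisation statement you have not established; without it you cannot conclude that the $\lambda=0$ output of your construction is the stable limit rather than some other (possibly unstable) object. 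The paper's hinted route through harmonic bundles sidesteps this by working on the analytic side: one would show that the $L^2$ direct image of a tame harmonic bundle is again a tame harmonic bundle (a theorem in the spirit of Zucker, proved in the requisite generality by Mochizuki/Sabbah), and then invoke the Kobayashi--Hitchin correspondence to get polystability of the associated Higgs bundle directly, rather than arguing by specialisation inside the Hodge family.

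In short: your outline is sensible and your self-diagnosis of the gap is accurate; the paper offers no proof to compare with, only the indication that harmonic metrics or twistor $\mathcal{D}$-modules are the intended tools.
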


We don't do the proof here. One should be able to show that stability is preserved by
saying that direct image preserves harmonic bundles. This is the subject of work in progress
by the third author with R. Donagi and T. Pantev; however it should also be a consequence of
Sabbah's theory of twistor ${\mathcal D}$-modules \cite{Sabbah}. Nonetheless this proposition will be used in the following discussion, meaning that the remainder of the paper is for the moment heuristic. 

The local monodromy eigenvalues can be written 
$$
f^+_i = e^{\sqrt{-1}\theta ^+_i}, \;\;\; f^-_i = e^{\sqrt{-1}\theta ^-_i},
$$
with 
$$
\theta ^{\pm} _i = 2\pi (\mu _i \pm \epsilon _i),
$$
and $0<\epsilon _i < 1/2$. This corresponds to a logarithmic connection whose
residues are $\mu _i \pm \epsilon _i$. 

In our main discussion of foliations on the moduli space, we have 
used the normalization $\deg (E)=1$.
By the Fuchs relation this means 
\begin{equation}
\label{fuchsmu}
\mu _1+\mu _2 +\mu _3 +\mu _4 = -\frac{1}{2}.
\end{equation}
To correspond to a bundle of odd degree, this relation should hold modulo $\zz$.

Now make a choice of which eigenvalue will be used for the  middle convolution
at each point i.e. along each horizontal line of $\pp ^1\times \pp ^1$,
by choosing the identification between $\{ f_i,f'_i\}$ and $\{ f^+_i,f^-_i\}$.
Choose
$f_i:= f^+_i$ for all $i$. 
It
should be stressed that the result will depend on this choice,
see Remark \ref{badchoice} below.

Note that 
$$
f_1f_2f_3f_4 = -e^{2\pi \sqrt{-1}(\epsilon _1+\epsilon _2+\epsilon_3+\epsilon _4)}
$$
because of the relation \eqref{fuchsmu} for the $\mu _i$. 

By Lemma \ref{katzmon} and the relation \eqref{fuchsmu}, 
the eigenvalues of the local monodromy transformation of the
middle convolution ${\bf mc}_{\beta}(L)$ at $t_i$ are 
$$
b_i\mbox{   and   }  
b_i e^{2\pi \sqrt{-1}y_i}
$$
where
$$
y_i = -\frac{1}{2}+ 
\epsilon _1 + \epsilon _2 + \epsilon _3 +\epsilon _4 -2\epsilon _i.
$$
We should choose a labeling of these eigenvalues of
the middle convolution as $c_i^{\pm}$, in such a way
as to correspond to a logarithmic connection of odd degree. 

Let us start off with a local system in one of the unstable zones,
for example 
\begin{equation}
\label{firstzone}
\epsilon _1+\epsilon _2 +\epsilon _3 + \epsilon _4 < 1/2.
\end{equation}
In this case we have 
$$
-1 < y_i < 0.
$$
For this case, set
$$
c^+_i := b_i \mbox{   and   }c^-_i:= b_i e^{2\pi \sqrt{-1}y_i}.
$$
Write $b_i= e^{2\pi \sqrt{-1}z_i}$ and put 
$\mu '_i:= z_i + y_i/2$ and $\epsilon '_i = -y_i/2$. 
Now
$$
c^+_i = e^{2\pi \sqrt{-1}(\mu '_i+\epsilon '_i)},
$$
$$
c^-_i = e^{2\pi \sqrt{-1}(\mu '_i-\epsilon '_i)}.
$$
We have 
$$
\frac{y_1+y_2+y_3+y_4}{2} = -1 + \epsilon _1 + \epsilon _2 +\epsilon _3 + \epsilon _4.
$$
On the other hand,
the equation $b_1b_2b_3b_4 = (f_1f_2f_3f_4)^{-1}$ yields
$$
z_1+z_2+z_3+z_4 =  \frac{1}{2} - (\epsilon _1+\epsilon _2 +\epsilon _3 + \epsilon _4) \mbox{  mod } \zz .
$$
Putting these together we get
$$
\mu '_1+\mu '_2 +\mu '_3 +\mu '_4 
= z_1+z_2+z_3+z_4 + \frac{y_1+y_2+y_3+y_4}{2}
$$
$$ 
=
\left[\frac{1}{2} - (\epsilon _1+\epsilon _2 +\epsilon _3 + \epsilon _4)\right]
+ \left[-1 + \epsilon _1 + \epsilon _2 +\epsilon _3 + \epsilon _4\right] = -\frac{1}{2} 
$$
modulo $\zz$. Therefore the given choice corresponds to a logarithmic connection on
a bundle of odd degree, and indeed modifying some $z_i$ by
an integer allows us to assume the degree is $1$. 
We have 
\begin{eqnarray*}
\epsilon '_1 & = & \frac{1}{4}+ 
\frac{\epsilon _1}{2} - \frac{\epsilon _2}{2} - \frac{\epsilon _3}{2} -\frac{\epsilon _4}{2}
\\
\epsilon '_2 & = & \frac{1}{4}- 
\frac{\epsilon _1}{2} + \frac{\epsilon _2}{2} - \frac{\epsilon _3}{2} -\frac{\epsilon _4}{2}
\\
\epsilon '_3 & = & \frac{1}{4}- 
\frac{\epsilon _1}{2} - \frac{\epsilon _2}{2} + \frac{\epsilon _3}{2} -\frac{\epsilon _4}{2}
\\
\epsilon '_4 & = & \frac{1}{4}- 
\frac{\epsilon _1}{2} - \frac{\epsilon _2}{2} - \frac{\epsilon _3}{2} +\frac{\epsilon _4}{2} .
\end{eqnarray*}
Notice that these are all in the interval $0<\epsilon '_i<1/2$, in view of \eqref{firstzone}.

We can now calculate
$$
\epsilon '_1 + \epsilon '_2 +\epsilon '_3 + \epsilon '_4 
= 1 -  \epsilon _1 -\epsilon _2 -\epsilon _3 -\epsilon _4
$$
so 
$$
1/2 < \epsilon '_1 + \epsilon '_2 +\epsilon '_3 + \epsilon '_4 < 1 <\frac{3}{2}.
$$
Also for example
$$
\epsilon '_1 + \epsilon '_2 -\epsilon '_3 - \epsilon '_4 
=  \epsilon _1 +\epsilon _2 -\epsilon _3 -\epsilon _4
$$
which, in view of the assumption \eqref{firstzone}, gives
$$
-1/2 < \epsilon '_1 + \epsilon '_2 -\epsilon '_3 - \epsilon '_4 < 1/2 .
$$
Similarly for the other conditions of type \eqref{stabc}.

\begin{lemma}
Suppose $L$ is a local system with finite order monodromy, corresponding
to an odd degree logarithmic connection whose residues and parabolic
weights are in the unstable zone (a) i.e. they satisfy \eqref{firstzone}.
Then choosing a rank one local system $\beta$
with $a_i = (f^+_i)^{-1}$, the middle convolution 
${\bf mc}_{\beta}(L)$ has local  monodromy transformations, again of finite
order, lying in the stable zone. 
\end{lemma}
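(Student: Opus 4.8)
The plan is to observe that this lemma is a bookkeeping corollary of the eigenvalue formula of Lemma~\ref{katzmon} together with the explicit expressions for the $\epsilon'_i$ derived in the paragraphs above, so the proof amounts to assembling those computations and verifying the three defining inequalities \eqref{staba}, \eqref{stabb}, \eqref{stabc} of the stable zone.

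First I would record that finite order of the local monodromy of $L$ forces all the $\epsilon_i$ and $\mu_i$ to be rational. Choosing $\beta$ with $a_i=(f^+_i)^{-1}$ a root of unity, one may take $b_i=e^{2\pi\sqrt{-1}z_i}$ to be roots of unity with the $z_i$ rational, since the imposed constraint $b_1b_2b_3b_4=(f_1f_2f_3f_4)^{-1}$ has a root of unity on the right and a root of unity always factors as a product of four roots of unity. Then by Lemma~\ref{katzmon} the local monodromy eigenvalues of ${\bf mc}_\beta(L)$ at $t_i$ are $b_i$ and $b_if^-_i(f_1f_2f_3f_4)/f^+_i$, both roots of unity; since these local monodromy transformations are semisimple, they have finite order, which is the first assertion. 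One also gets, for free from the accompanying proposition, that ${\bf r}'$ and $\alpha'$ are again nonspecial.

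Next I would run the normalization already carried out in the text. With $f_i=f^+_i$ one has $f_1f_2f_3f_4=-e^{2\pi\sqrt{-1}(\epsilon_1+\epsilon_2+\epsilon_3+\epsilon_4)}$ by \eqref{fuchsmu}, so the second eigenvalue at $t_i$ is $b_ie^{2\pi\sqrt{-1}y_i}$ with $y_i=-\tfrac12+\epsilon_1+\epsilon_2+\epsilon_3+\epsilon_4-2\epsilon_i$. Under \eqref{firstzone} we have $y_i<-2\epsilon_i<0$ and $y_i>-\tfrac12-\epsilon_i>-1$, so setting $c^+_i:=b_i$, $c^-_i:=b_ie^{2\pi\sqrt{-1}y_i}$, $\epsilon'_i:=-y_i/2$, $\mu'_i:=z_i+y_i/2$ yields $0<\epsilon'_i<\tfrac12$ (the nonresonance part), and the congruence $z_1+z_2+z_3+z_4\equiv\tfrac12-(\epsilon_1+\epsilon_2+\epsilon_3+\epsilon_4)\pmod\zz$ coming from $b_1b_2b_3b_4=(f_1f_2f_3f_4)^{-1}$ gives $\mu'_1+\mu'_2+\mu'_3+\mu'_4\equiv-\tfrac12\pmod\zz$, so this labelling corresponds to a connection of odd degree. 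Finally I would check the stable-zone inequalities: from the displayed formulas for the $\epsilon'_i$ one reads off $\epsilon'_1+\epsilon'_2+\epsilon'_3+\epsilon'_4=1-(\epsilon_1+\epsilon_2+\epsilon_3+\epsilon_4)$, which lies in $(\tfrac12,1)$ by \eqref{firstzone}, giving \eqref{staba} and \eqref{stabb}; and for every renumbering the $\tfrac14$'s cancel to give $\epsilon'_i+\epsilon'_j-\epsilon'_k-\epsilon'_l=\epsilon_i+\epsilon_j-\epsilon_k-\epsilon_l$, which has absolute value at most $\epsilon_1+\epsilon_2+\epsilon_3+\epsilon_4<\tfrac12$ (all $\epsilon$'s being positive), giving \eqref{stabc}. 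Hence $\alpha'$ lies in the stable zone.

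There is no serious obstacle here: the only delicate points are purely combinatorial, namely the correct choice of which output eigenvalue is labelled $c^+_i$ versus $c^-_i$ and the accompanying $\bmod\ \zz$ normalization that forces the output connection to have odd degree. It is worth flagging explicitly that the clean outcome relies on the uniform choice $f_i=f^+_i$ at all four points; a different choice at some subset of the $t_i$ produces instead one of the type-(c) unstable zones rather than the stable zone, which is the content of Remark~\ref{badchoice}.
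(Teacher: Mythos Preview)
Your proposal is correct and follows exactly the paper's approach: the lemma is a summary of the computations displayed immediately before it, and the paper's own proof is the single line ``The above calculations give \eqref{staba}, \eqref{stabb}, and \eqref{stabc}.'' Your write-up simply recapitulates those computations (with the added observation that the output monodromy eigenvalues remain roots of unity, hence finite order), so there is nothing substantively different to compare.
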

\begin{proof}
The above calculations give \eqref{staba}, \eqref{stabb}, and \eqref{stabc}. 
\end{proof}

\begin{proposition}
The same holds for the other unstable zones: the middle convolution with a
suitably chosen $\beta$ goes into the stable zone. In the other direction,
if $L$ starts off in the stable zone then for a suitably chosen $\beta$
the middle convolution will lie in the unstable zone. 
\end{proposition}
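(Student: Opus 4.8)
The plan is to reduce to a combinatorial statement about the four numbers $\epsilon _i$ and then to repeat, in each zone, the computation already carried out for zone (a) in the preceding Lemma. Up to the data that will only move the $\mu '_i$, a choice of $\beta$ is a choice, at each $t_i$, of which monodromy eigenvalue of $L$ is used for $a_i^{-1}$; record this by a sign vector $\sigma =(\sigma _1,\sigma _2,\sigma _3,\sigma _4)\in \{ +,-\} ^4$, so that $f_i=f_i^{\sigma _i}$. Lemma \ref{katzmon} and the Fuchs relation \eqref{fuchsmu} give that the local monodromy of ${\bf mc}_{\beta}(L)$ at $t_i$ has eigenvalues $b_i$ and $b_ie^{2\pi \sqrt{-1}y_i}$ with
\[
y_i\equiv -\tfrac12+\sum _j\sigma _j\epsilon _j-2\sigma _i\epsilon _i\pmod{\zz}.
\]
Hence the new half-gaps $\epsilon '_i$ --- each half the distance from $y_i$ to the nearest integer --- and therefore the zone of $\alpha '$, depend only on $(\epsilon ,\sigma )$; the remaining freedom in $\beta$, namely the $b_i$ with $b_1b_2b_3b_4=(f_1f_2f_3f_4)^{-1}$, enters only in the $\mu '_i$, and the same manipulation as in the Lemma --- using \eqref{fuchsmu} together with the constraint on $\prod b_i$ --- shows that the eigenvalues can be labeled $c_i^{\pm}$ so that $\sum _i\mu '_i\equiv -\tfrac12\pmod{\zz}$, i.e. the output again sits on an odd degree bundle, provided the representatives of the $y_i$ are chosen in a common unit interval, which in each of the cases below is possible.

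For the forward direction it then suffices to exhibit the right $\sigma$ in each unstable zone. By Lemma \ref{etpermute} pairs of elementary transformations permute the eight unstable zones transitively; by Lemma \ref{et} a pair at $t_i,t_j$ acts by $\epsilon _i\mapsto \tfrac12-\epsilon _i$, $\epsilon _j\mapsto \tfrac12-\epsilon _j$, which modulo $\zz$ is precisely the effect of flipping $\sigma _i,\sigma _j$ above; and since an elementary transformation at $t_i$ interchanges $f_i^+$ with $f_i^-$, performing those elementary transformations and then applying ${\bf mc}$ (with the new $f_i^+$) equals ${\bf mc}$ with the correspondingly flipped $\sigma$, the parity of the degree being preserved because the elementary transformations are applied in pairs. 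As pairs of elementary transformations also preserve the stable zone (Section \ref{sec-stablezone}) and the Lemma treats zone (a) with $\sigma =(+,+,+,+)$, every unstable zone is carried into the stable zone. Zone (b) can also be checked by hand: with $\sigma =(-,-,-,-)$ one gets $y_i\in (-2,-1)$, hence $\epsilon '_i=-\tfrac14+\tfrac12\sum _j\epsilon _j-\epsilon _i\in (0,\tfrac12)$, so $\sum _i\epsilon '_i=\sum _j\epsilon _j-1\in (\tfrac12,1)$ yields \eqref{staba} and \eqref{stabb}, and $\epsilon '_i+\epsilon '_j-\epsilon '_k-\epsilon '_l=-(\epsilon _i+\epsilon _j-\epsilon _k-\epsilon _l)$ yields \eqref{stabc}, using that zone (b) is disjoint from the type-(c) zones and that nonspeciality makes the inequalities strict. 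The type-(c) zones are the same linear computation with the appropriate signs inserted, or simply the elementary-transformation reduction above.

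For the converse one proceeds in the same way, but now the correct $\sigma$ depends on which of the sixteen sub-zones of the stable zone --- those cut out by the hyperplanes $\epsilon _j+\epsilon _k+\epsilon _l-\epsilon _i=\tfrac12$ of the discussion after Theorem \ref{stabletheorem} --- contains $\epsilon$; for that $\sigma$ one verifies, again from the displayed formula, that exactly one of \eqref{staba}, \eqref{stabb}, \eqref{stabc} fails for $\epsilon '$, so ${\bf mc}_{\beta}(L)$ falls into the corresponding unstable zone. Conceptually this is the statement that the extra Okamoto reflection $s_0$ --- which is what ${\bf mc}_{\beta}$ induces on the $\epsilon$-parameter space, cf. \eqref{eq:ok-inv}, \eqref{eq:q-Q} --- interchanges the central ``stable'' region of the parameter region $0<\epsilon _i<\tfrac12$ with the union of the eight ``unstable'' corner and edge regions. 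The main obstacle is to make this converse uniform over the whole stable zone: one must pin down, sub-zone by sub-zone, the matching between the sixteen sub-zones and the images of the eight unstable zones --- the relationship only conjectured in the remark following Theorem \ref{stabletheorem} --- while simultaneously keeping the representatives of the $y_i$ in a common unit interval and the labeling $c_i^{\pm}$ consistent with odd degree, which is exactly what forces the map to be piecewise affine and the argument to split into cases. Alternatively, since middle convolution is invertible with inverse again a middle convolution, the converse would follow from the forward direction once one knows that the images of the eight unstable zones cover the stable zone.
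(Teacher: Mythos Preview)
Your forward-direction argument is correct and, via the elementary-transformation reduction using Lemmas~\ref{et} and~\ref{etpermute}, is actually cleaner than the paper's, which simply asserts that ``the calculations are similar to the case (a) treated above'' without further detail. Your explicit check of zone (b) with $\sigma=(-,-,-,-)$ is also fine and matches what a direct computation gives.

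For the converse, however, the paper takes a much shorter route than your primary attempt. Rather than trying to match the sixteen sub-zones of the stable region to sign vectors $\sigma$ and then track representatives of the $y_i$ case by case, the paper simply invokes the fact that middle convolution is \emph{involutive} up to elementary transformations and tensoring with rank~$1$ local systems---both of which preserve the stable/unstable distinction---and concludes directly that ``unstable $\to$ stable'' forces ``stable $\to$ unstable''. You do mention this at the very end, but only as an afterthought and with the caveat that one must know the eight images cover the stable zone. The paper in effect asserts exactly that covering when it says ``the images by ${\bf mc}$ divide the stable zone up into $8$ sub-regions''; granting it, involutivity finishes immediately: for $L$ in the stable zone there is some unstable $L''$ and some $\beta''$ with ${\bf mc}_{\beta''}(L'')$ equal to $L$ up to zone-preserving operations, and the inverse of ${\bf mc}_{\beta''}$---again a middle convolution composed with such operations---sends $L$ back to the unstable $L''$. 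Your case-by-case program is not wrong in spirit, but it is precisely the ``computational difficulty'' the paper explicitly chooses to sidestep; you should lead with the involutivity argument rather than bury it.
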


There are $8$ unstable zones in all, types (a), (b) and $6$ zones of type (c).
The calculations are similar to the case (a) treated above. The images by
${\bf mc}$ divide the stable zone up into $8$ sub-regions, which presents a computational
difficulty for going back in the other direction. However, the fact that ${\bf mc}$ is
involutive up to operations of elementary transformations and tensoring with rank $1$
systems (which leave stable the distinction between stable and unstable zones), so
the fact that the unstable zones go to the stable zone implies that the stable zone
goes to the unstable zones. 

\begin{remark}
\label{badchoice}
If, in the example above, we had chosen $f_i= f^+_i$ for $i=1,2,3$ but $f_4=f^-_4$,
then the corresponding choice of $\beta$ would have left ${\bf mc}_{\beta}(L)$  remaining inside the unstable zone.
In general up to the operations of doing pairs of elementary transformations, there
are two distinct choices for $\beta$ and one of them will interchange the two zones. 
\end{remark}

As Dettweiler and Reiter \cite{DettweilerReiterPainleve} point out, 
the middle convolution operation is one of the additional symmetries considered by
Okamoto, although its normalization depends on the choice of $a_i$ and $b_i$.

The middle convolution is obtained by pullback and higher direct image. These operations
preserve the Hodge filtration moduli spaces $\Mm _{\rm Hod}$ when well-defined 
(for example if we assume Kostov genericity). They are compatible with the action of $\Gm$,
so they preserve the limiting operation and hence the foliation by subspaces defined by
looking at what the limit is. Hence, the middle convolution operation preserves
the Higgs limit foliation, and since it exchanges stable and unstable zones, it takes the
apparent singularity foliation to the parabolic structure foliation. 

This gives a partial explanation of the interchanging
phenomenon observed by Arinkin-Lysenko \cite{ArinkinLysenko2} and the first author 
in \cite{Loray} and described in the previous section, 
although it leaves open the question of why it acts trivially
on the quotient space $\Pp$ by the foliation. It might be possible to answer that
by looking more carefully at the direct image operation in the middle convolution,
as applied to parabolic Higgs bundles.  

Using the middle convolution one can reduce the proof of the foliation
conjecture for the stable zone, to the case of the unstable zone which was already
known by \cite{InabaIwasakiSaito}. This gives an alternate method to prove Corollary
\ref{foliationstable}. 

\newpage

\bibliographystyle{amsalpha}

\providecommand{\bysame}{\leavevmode\hbox to3em{\hrulefill}\thinspace}
\providecommand{\MR}{\relax\ifhmode\unskip\space\fi MR }
\providecommand{\MRhref}[2]{%
  \href{http://www.ams.org/mathscinet-getitem?mr=#1}{#2}
}
\providecommand{\href}[2]{#2}
\begin{thebibliography}{}

\end{thebibliography}


\begin{thebibliography}{A}

\bibitem{Aidan}
J. Aidan. Doctoral dissertation, in preparation. 

\bibitem{Arinkin}
D. Arinkin.
Orthogonality of natural sheaves on moduli stacks of $SL(2)$-bundles with 
connections on $\pp ^1$ minus $4$ points.
{\em Selecta Mathematica} {\bf 7} (2001), 213-239.

\bibitem{ArinkinLysenko1}
D. Arinkin, S. Lysenko. On the moduli of $SL(2)$-bundles with connections on
$\pp ^1-\{t_1, \ldots , t_4\}$. {\em Internat. Math. Res. Not.} {\bf 19} (1997), 983–--999.

\bibitem{ArinkinLysenko2}
D. Arinkin, S. Lysenko. Isomorphisms between moduli spaces of $SL(2)$-bundles
with connections on $\pp ^1-\{t_1, \ldots , t_4\}$. 
{\em Math. Res. Lett.} {\bf 4} (1997), 181--–190.

\bibitem{Atiyah}
M. Atiyah. Complex analytic connections in fibre bundles. 
{\em Trans. Amer. Math. Soc.} {\bf 85}
(1957), 181–--207.

\bibitem{Biswas}
I. Biswas. A criterion for the existence of a flat connection on a
parabolic vector bundle. {\em Adv. Geom.} {\bf 2} (2002), 231–--241.

\bibitem{Boalch}
P. Boalch. 
From Klein to Painlev\'e via Fourier, Laplace and Jimbo. 
{\em Proc. London Math. Soc}. {\bf  90} (2005), 167-208.

\bibitem{BoalchQuiver}
P. Boalch. 
Quivers and difference Painlev\'e equations.
{\em Groups and Symmetries: from Neolithic Scots to John McKay}
(J. Harnad, P. Winternitz, eds.)
{\sc CRM Proceedings and Lecture Notes} 
{\bf 47} (2009), 25-52.

\bibitem{BodenYokogawa}
H. Boden, K. Yokogawa. Moduli spaces of parabolic Higgs bundles and parabolic $K(D)$ pairs over smooth curves, I.
{\em Internat. J. Math}. {\bf 7} (1996), 573-598.

\bibitem{Borne}
N. Borne. Sur les repr\'esentations du groupe fondamental d'une vari\'et\'e 
priv\'ee d'un diviseur \`a croisements normaux simples.
{\em Indiana Univ. Math. J.} {\bf 58} (2009), 137-180.

\bibitem{CantatLoray}
S. Cantat, F. Loray. 
Holomorphic dynamics, Painlev\'e VI equation and character varieties.
Preprint \verb+http://arxiv.org/abs/0711.1579+ (2007).

\bibitem{CantatLoray2}
S. Cantat, F. Loray. 
Dynamique sur la vari\'et\'e des caract\`eres et irr\'eductibilit\'e au sens de Malgrange de l'\'equation de Painlev\'e VI.
{\em Ann. Inst. Fourier} {\bf 59} (2009), 2927-2978. 


\bibitem{CrawleyBoevey}
W. Crawley Boevey. 
Indecomposable parabolic bundles and the existence of matrices in prescribed 
conjugacy class closures with product equal to the identity. 
{\em Publ. Math. I.H.E.S.} {\bf 100} (2004), 171-207. 

\bibitem{Deligne}
P. Deligne. {\em Equations diff\'erentielles \`a points singuliers r\'eguliers}.  
{\sc Lecture Notes in Math.} {\bf 163} (1970). 

\bibitem{DettweilerReiterPainleve}
M. Dettweiler, S. Reiter.
Painlev\'e equations and the middle convolution.
{\em Adv. Geom.} {\bf 7} (2007), 317-330. 

\bibitem{DubrovinMazzocco}
B. Dubrovin, M. Mazzocco. Monodromy of certain Painlev´e-VI transcendents
and reflection groups. {\em Invent. Math.} {\bf 141} (2000), 55–--147.

\bibitem{EsnaultViehweg}
H. Esnault, E. Viehweg. Semistable bundles on curves and irreducible representations
of the fundamental group. {\em Algebraic Geometry, Hirzebruch 70 (Warsaw 1998)},
{\sc Contemp. Math.} {\bf 241} (1999), 129-138. 

\bibitem{GPGM}
O. Garcia-Prada, P. Gothen, V. Mu\~noz. 
{\em Betti numbers of the moduli space of rank $3$ parabolic Higgs bundles}.
{\sc Memoirs A.M.S.} {\bf 879} (2007). 

\bibitem{GoldmanToledo}
W. Goldman, D. Toledo. 
Affine cubic surfaces and relative $SL(2)$-character varieties of compact surfaces. 
Preprint \verb+arXiv:1006.3838+ (2010).

\bibitem{Hitchin}
N. Hitchin. 
The self-duality equations on a Riemann surface.
{\em Proc. London Math. Soc}. (3) {\bf 55} (1987),  59-126.

\bibitem{InabaIwasakiSaito}
M. Inaba, K. Iwasaki, M.-H. Saito.
Moduli of stable parabolic connections, Riemann-Hilbert correspondence and geometry of Painlev\'e 
equation of type VI, Part I.
{\em Publ. Res. Inst. Math. Sci}. {\bf 42} (2006), 987-1089.

\bibitem{InabaIwasakiSaito2}
M. Inaba, K. Iwasaki, M.-H. Saito.
Moduli of stable parabolic connections, Riemann-Hilbert correspondence and geometry of Painlev\'e 
equation of type VI, Part II.
Moduli spaces and arithmetic geometry. {\em Adv. Stud. Pure Math.} {\bf 45}, Math. Soc. Japan (2006), 387--432.

\bibitem{IyerSimpson}
J. Iyer, C. Simpson. The Chern character of a parabolic bundle, and a parabolic corollary of Reznikov's theorem.
{\em Progr. in Math.} {\bf 265}, Birk\"auser (2007), 437-483.


\bibitem{JimboMiwa} M. Jimbo and  T. Miwa, 
 Monodromy preserving deformation of linear ordinary differential 
equations with rational coefficients. {II}.,  {\em Physica D}, {\bf 2},  (1981), {407--448}.

\bibitem{Konno}
H. Konno. Construction of the moduli space of stable parabolic Higgs bundles on a Riemann surface. {\em J. Math.
Soc. Japan} {\bf 45} (1993), 253-276.

\bibitem{Loray}
F. Loray. Okamoto symmetry of Painlev\'e VI equation and isomonodromic deformation of Lam\'e connections. {\em Algebraic, analytic and geometric aspects of complex differential equations and their deformations. Painlev\'e hierarchies}, 129--136, 
RIMS K\^{o}ky\^{u}roku Bessatsu, B2, Res. Inst. Math. Sci. (RIMS), Kyoto, 2007. A detailled article is in preparation.



\bibitem{Machu1}
F.-X. Machu. Monodromy of a class of logarithmic connections on an elliptic curve.
{\em S.I.G.M.A.} {\bf 3} (2007), 082, 31pp. 

\bibitem{MaruyamaYokogawa}
M. Maruyama, K. Yokogawa. Moduli of parabolic stable sheaves.
{\em Math. Ann.} {\bf 293} (1992), 77–99.

\bibitem{Mochizuki}
T. Mochizuki.
Asymptotic behaviour of tame harmonic bundles and an application to pure twistor $D$-modules, parts I and II.
{\em Memoirs of the AMS} {\bf 869}-{\bf 870} (2007).

\bibitem{Mochizuki2}
T. Mochizuki. 
Kobayashi-Hitchin correspondence for tame harmonic bundles and an application.
{\em Ast\'erisque} {\bf 309} (2006). 

\bibitem{Nakajima}
H. Nakajima. Hyper-K\"ahler structures on moduli spaces of parabolic Higgs bundles on Riemann surfaces.
Moduli of vector bundles (Sanda, 
Kyoto, 1994), {\em Lect. Notes Pure Appl. Math.} {\bf 179} (1996), 199-208.

\bibitem{Nitsure}
N. Nitsure. Moduli of semistable logarithmic connections. {\em J. Amer. Math. Soc.} {\bf 6} (1993), 597-609. 



\bibitem{NoumiYamada}
M. Noumi, Y. Yamada. A new Lax pair for the sixth Painlevé equation associated with $\hat{\mathfrak{so}}(8)$. {\em Microlocal analysis and complex Fourier analysis}, 238--252, World Sci. Publ., River Edge, NJ, 2002. 
 \verb+http://arxiv.org/abs/math-ph/0203029+ (2002).
 
 
\bibitem{Oblezin}
S. Oblezin. Isomonodromic deformations 
of the $sl(2)$ Fuchsian systems on the Riemann sphere.
{\em Mosc. Math. J.} 5 (2005), no. 2, 415--441, 494--495. 
Preprint arXiv:math-ph/0309048 (2003). 



\bibitem{Okamoto1} 
K. Okamoto.
Sur les feuilletages associ\'es aux \'equations du second
ordre \`a points critiques fixes de P. Painlev\'e, Espaces des
conditions initiales.
{\em Japan. J. Math.} {\bf 5}, (1979), 1-79.

\bibitem{Okamoto2} 
K. Okamoto.
Isomonodromic deformation and Painlev\'e equations
and the Garnier system. 
{\em J. Fac. Sci. Univ. Tokyo, Sect. IA,
Math.} {\bf 33},  (1986), 575-618

\bibitem{Okamoto3}
K. Okamoto.
Studies on the Painlev\'e equations I.
Sixth Painlev\'e equation P{\,}VI, {\em Ann. Mat. Pura Appl.}
(4) {\bf 146} (1987), 337-381.

\bibitem{Sabbah}
C. Sabbah.  Polarizable twistor ${\mathcal D}$-modules.
{\em Ast\'erisque} {\bf 300}, (2005).

\bibitem{SaitoTakebeTerajima} 
M.-H. Saito, T. Takebe, H. Terajima. 
Deformation of Okamoto-Painlev\'e pairs and Painlev\'e equations.
{\em J. Algebraic Geom.} {\bf 11}  (2002),  no. 2, 311--362.

\bibitem{Sakai}  
H. Sakai.
Rational surfaces associated with affine
root systems and geometry of the Painlev\'e equations. 
{\em Comm. Math. Phys.}
{\bf 220} (2001), 165--229.

\bibitem{Schmitt}
A. Schmitt. Projective moduli for Hitchin pairs.
{\em Internat. J. Math.} {\bf 9} (1998), 107–118. 

\bibitem{Seshadri}
C. Seshadri, 
Moduli of $\pi $-vector bundles over an algebraic curve, Questions on Algebraic Varieties (C.I.M.E., III Ciclo, Varenna, 1969), 
{\em Edizioni Cremonese}, Rome (1970) 139-260. 

\bibitem{hbnc}
C. Simpson. Harmonic bundles on noncompact curves. 
{\em J. Amer. Math. Soc.} {\bf 3}  (1990), 713-770.

\bibitem{hbls}
C. Simpson. Higgs bundles and local systems.
{\em Publ. Math. I.H.E.S.} {\bf 75} (1992), 5-95.

\bibitem{kmca}
C. Simpson. Katz's middle convolution algorithm. {\em Pure Appl. Math. Q.}
{\bf 5} (2009), 781–852. 

\bibitem{idsm}
C. Simpson. Iterated destabilizing modifications for vector bundles with connection.
{\em Vector Bundles and Complex Geometry (Conference in Honor of Ramanan, Miraflores 2008)}
{\sc Contemporary Math.} {\bf 522} (210), 183-206.

\bibitem{Szabo}
S. Szabo. Deformations of Fuchsian equations and logarithmic connections.
Arxiv preprint math/0703230 (2007).

\bibitem{Weil}
A. Weil. Generalization de fonctions abeliennes. {\em  J. Math. Pures Appl.} 
{\bf 17} (1938), 47-87.

\bibitem{Yokogawa}
K. Yokogawa. {\em Compactification of moduli of parabolic sheaves and moduli of parabolic Higgs sheaves.}
J. Math. Kyoto Univ. {\bf 33} (1993), 451-504.


\end{thebibliography}

\end{document}